\theoremstyle{plain}
\newtheorem{corollary}{Corollary}
\newtheorem{definition}{Definition}
\newtheorem{lemma}{Lemma}
\newtheorem{proposition}{Proposition}
\newtheorem{remark}{Remark}
\newtheorem{theorem}{Theorem}
\numberwithin{equation}{section}
\begin{document}
\title[Riesz Kernels and Pseudodifferential Operators]{Riesz Kernels and Pseudodifferential Operators Attached to Quadratic Forms
Over $p-$adic Fields }
\author{O. Casas-S\'{a}nchez}
\address{Universidad Nacional de Colombia, Departamento de Matem\'{a}ticas \\
Ciudad Universitaria, Bogot\'{a} D.C., Colombia.}
\email{ofcasass@unal.edu.co}
\author{W. A. Z\'{u}\~{n}iga-Galindo}
\address{Centro de Investigaci\'{o}n y de Estudios Avanzados del Instituto
Polit\'{e}cnico Nacional\\
Departamento de Matem\'{a}ticas, Unidad Quer\'{e}taro\\
Libramiento Norponiente \#2000, Fracc. Real de Juriquilla. Santiago de
Quer\'{e}taro, Qro. 76230\\
M\'{e}xico.}
\email{wazuniga@math.cinvestav.edu.mx}
\thanks{The second author was partially supported by CONACYT under Grant \# 127794.}
\subjclass[2000]{Primary 35S05, 11S40; Secondary 26E30}
\keywords{Pseudodifferential equations, Riesz kernels, Local zeta functions,
Non-Archimedean analysis}

\begin{abstract}
We study pseudodifferential equations and Riesz kernels attached to certain
quadratic forms over $p$-adic fields. We attach to an elliptic quadratic form
of dimension two or four a family of distributions depending on a complex
parameter, the Riesz kernels, and show that these distributions form an
Abelian group under convolution. This result implies the existence of
fundamental solutions for certain pseudodifferential equations like in the
classical case.

\end{abstract}
\maketitle

\section{Introduction}

This paper aims to study Riesz kernels and pseudodifferential equations
a\-ttached to quadratic forms over $p$-adic fields. The Riesz kernels are
naturally connected with several types of (pseudo) differential equations in
the Archimedean setting, see e.g. \cite{dRham}, \cite{K-V}, \cite{Rubin},
\cite{Riesz}, and non Archimedean one, see e.g. \cite{A-K-S}, \cite{Koch},
\cite{R-Zu}, \cite{R-Zu1}, \cite{Taibleson}, \cite{V-V-Z}. In particular, in
the non Archimedean setting, Riesz kernels attach to `polynomials of degree
one' has been used to solve pseudodifferential equations \cite{A-K-S},
\cite{Koch}, \cite{V-V-Z}. Our initial motivation was to extend \ these
results to the case of polynomials of higher degree to obtain $p$-adic analogs
of the results of \cite{dRham}, \cite{Riesz}. To present our results consider
the diagonal quadratic form $f\left(  \xi\right)  =a_{1}\xi_{1}^{2}%
+\ldots+a_{n}\xi_{n}^{2}$, the local zeta function attached to $f$ is the
distribution defined by%
\[
\left(  \left\vert f\right\vert _{p}^{s},\phi\right)  =%
{\displaystyle\int\limits_{\mathbb{Q}_{p}^{n}\smallsetminus f^{-1}\left(
0\right)  }}
\left\vert f\left(  \xi\right)  \right\vert _{p}^{s}\phi\left(  \xi\right)
d^{n}\xi\text{, }\operatorname{Re}\left(  s\right)  >0.
\]
These distributions, called local zeta functions, were introduced in the 50's
by I. Gel'fand and A. Weil, see \cite{G-S}, \cite{Igusa}. A Riesz kernel is a
local zeta function multiplied by a suitable gamma factor. In the cases in
which $n=2$, $4$ and the quadratic form is elliptic,\ we show that the Riesz
kernels, considered as distributions on certain $p$-adic Lizorkin spaces, form
an Abelian group under the operation of convolution, see Theorem \ref{mainA}
and Remark \ref{nota_dim_2}. The proof of this fact depends on a theorem of
Rallis-Schiffmann that asserts that the distributions of type $\left\vert
f\right\vert _{p}^{s}$ satisfy certain functional equations, see \cite{R-S},
also \cite{Igusa3}, \cite{Sato-Shi}, \cite{FSato1}, \cite{FSato2}. In order to
use this result we compute all the gamma factors that appear in the functional
equation for $p$-adic quadratic forms of type $a_{1}\xi_{1}^{2}+a_{2}\xi
_{2}^{2}+\cdots+a_{n}\xi_{n}^{2}$, see Theorem \ref{funcional}. As
consequence, we obtain \ fundamental solutions for certain\ pseudodifferential
equations, see Theorem \ref{mainB} an Remark \ref{nota_ultima}. The
fundamental solutions presented here are `classical solutions', see Definition
\ref{Classical_sol}, while those present in \cite{Koch} and \cite{Z-G1}%
-\cite{Z-G5}\ are `weak solutions'. We also obtain the existence of a
pseudodifferential operator $\boldsymbol{f}\left(  \partial,1\right)  $,
acting on a space of Lizorkin distributions, and a gamma factor $A(s)$ such
that $\boldsymbol{f}\left(  \partial,1\right)  \left\vert f\right\vert
_{p}^{s+1}=A(s)\left\vert f\right\vert _{p}^{s}$, where $f$ is an elliptic
quadratic form of dimension $2$ or $4$, see Theorem \ref{mainB} and Remark
\ref{nota_ultima}. This is a non Archimedean pseudodifferential (particular)
version of a celebrated result of Sato-Bernstein, see \cite{Sato-Shi},
\cite{Igusa}, \cite[Section 6.1.2]{Z-G5}. Thus, a natural problem is to study
the existence of pseudodifferential Sato-Bernstein operators, and the
corresponding functions, in the setting \ $p$-adic prehomogeneous vector
spaces, this problem was posed in \cite[Section 6.1.2]{Z-G5}. Finally, the
results obtained here can be applied to study other types of
pseudodifferential equations, these results will appear in a separate article elsewhere.

\textbf{Acknowledgement}. The authors want to thank to Professor Fumihiro Sato
for his kind assistance on the functional equation for the local zeta function
attached to a quadratic form. In particular, we are very grateful to him for
allowing us to use some of the ideas of his unpublished manuscript
\cite{FSato3}. The second author wants to thank to Professor Sergii Torba for
several useful comments and discussions about this article. The authors also
want to thank to the referee for his/her careful reading of the article.

\section{\label{Section1}Preliminaries}

In this section we fix the notation and collect some basic results on $p$-adic
analysis that we will use through the article. For a detailed exposition on
$p$-adic analysis the reader may consult \cite{A-K-S}, \cite{Taibleson},
\cite{V-V-Z}.

\subsection{The field of $p$-adic numbers}

Along this article $p$ will denote a prime number different from $2$. The
field of $p-$adic numbers $\mathbb{Q}_{p}$ is defined as the completion of the
field of rational numbers $\mathbb{Q}$ with respect to the $p-$adic norm
$|\cdot|_{p}$, which is defined as
\[
|x|_{p}=%
\begin{cases}
0 & \text{if }x=0\\
p^{-\gamma} & \text{if }x=p^{\gamma}\dfrac{a}{b},
\end{cases}
\]
where $a$ and $b$ are integers coprime with $p$. The integer $\gamma:=ord(x)$,
with $ord(0):=+\infty$, is called the\textit{ }$p-$\textit{adic order of} $x$.
We extend the $p-$adic norm to $\mathbb{Q}_{p}^{n}$ by taking%
\[
||x||_{p}:=\max_{1\leq i\leq n}|x_{i}|_{p},\qquad\text{for }x=(x_{1}%
,\dots,x_{n})\in\mathbb{Q}_{p}^{n}.
\]
We define $ord(x)=\min_{1\leq i\leq n}\{ord(x_{i})\}$, then $||x||_{p}%
=p^{-\text{ord}(x)}$. Any $p-$adic number $x\neq0$ has a unique expansion
$x=p^{ord(x)}\sum_{j=0}^{\infty}x_{i}p^{j}$, where $x_{j}\in\{0,1,2,\dots
,p-1\}$ and $x_{0}\neq0$. By using this expansion, we define \textit{the
fractional part of }$x\in\mathbb{Q}_{p}$, denoted $\{x\}_{p}$, as the rational
number
\[
\{x\}_{p}=%
\begin{cases}
0 & \text{if }x=0\text{ or }ord(x)\geq0\\
p^{\text{ord}(x)}\sum_{j=0}^{-ord(x)-1}x_{j}p^{j} & \text{if }ord(x)<0.
\end{cases}
\]
For $\gamma\in\mathbb{Z}$, denote by $B_{\gamma}^{n}(a)=\{x\in\mathbb{Q}%
_{p}^{n}:||x-a||_{p}\leq p^{\gamma}\}$ \textit{the ball of radius }$p^{\gamma
}$ \textit{with center at} $a=(a_{1},\dots,a_{n})\in\mathbb{Q}_{p}^{n}$, and
take $B_{\gamma}^{n}(0):=B_{\gamma}^{n}$. Note that $B_{\gamma}^{n}%
(a)=B_{\gamma}(a_{1})\times\cdots\times B_{\gamma}(a_{n})$, where $B_{\gamma
}(a_{i}):=\{x\in\mathbb{Q}_{p}:|x_{i}-a_{i}|_{p}\leq p^{\gamma}\}$ is the
one-dimensional ball of radius $p^{\gamma}$ with center at $a_{i}\in
\mathbb{Q}_{p}$. The ball $B_{0}^{n}(0)$ is equals the product of $n$ copies
of $B_{0}(0):=\mathbb{Z}_{p}$, \textit{the ring of }$p-$\textit{adic integers}.

\subsection{The Bruhat-Schwartz space}

A complex-valued function $\varphi$ defined on $\mathbb{Q}_{p}^{n}$ is
\textit{called locally constant} if for any $x\in\mathbb{Q}_{p}^{n}$ there
exist an integer $l(x)\in\mathbb{Z}$ such that%
\begin{equation}
\varphi(x+x^{\prime})=\varphi(x)\text{ for }x^{\prime}\in B_{l(x)}^{n}.
\label{local_constancy}%
\end{equation}
A function $\varphi:\mathbb{Q}_{p}^{n}\rightarrow\mathbb{C}$ is called a
\textit{Bruhat-Schwartz function (or a test function)} if it is locally
constant with compact support. The $\mathbb{C}$-vector space of
Bruhat-Schwartz functions is denoted by $\mathbf{S}(\mathbb{Q}_{p}^{n})$. For
$\varphi\in\mathbf{S}(\mathbb{Q}_{p}^{n})$, the largest of such number
$l=l(\varphi)$ satisfying (\ref{local_constancy}) is called \textit{the
exponent of local constancy of} $\varphi$.

Let $\mathbf{S}^{\prime}(\mathbb{Q}_{p}^{n})$ denote the set of all
functionals (distributions) on $\mathbf{S}(\mathbb{Q}_{p}^{n})$. All
functionals on $\mathbf{S}(\mathbb{Q}_{p}^{n})$ are continuous.

Set $\chi(y)=\exp(2\pi i\{y\}_{p})$ for $y\in\mathbb{Q}_{p}$. The map
$\chi(\cdot)$ is an additive character on $\mathbb{Q}_{p}$, i.e. a continuos
map from $\mathbb{Q}_{p}$ into $S$ (the unit circle) satisfying $\chi
(y_{0}+y_{1})=\chi(y_{0})\chi(y_{1})$, $y_{0},y_{1}\in\mathbb{Q}_{p}$.

Given $\xi=(\xi_{1},\dots,\xi_{n})$ and $x=(x_{1},\dots,x_{n})\in
\mathbb{Q}_{p}^{n}$, we set $\xi\cdot x:=\sum_{j=1}^{n}\xi_{j}x_{j}$. The
Fourier transform of $\varphi\in\mathbf{S}(\mathbb{Q}_{p}^{n})$ is defined as
\[
(\mathcal{F}\varphi)(\xi)=\int_{\mathbb{Q}_{p}^{n}}\chi(-\xi\cdot
x)\varphi(\xi)d^{n}x\quad\text{for }\xi\in\mathbb{Q}_{p}^{n},
\]
where $d^{n}x$ is the Haar measure on $\mathbb{Q}_{p}^{n}$ normalized by the
condition $vol(B_{0}^{n})=1$. The Fourier transform is a linear isomorphism
from $\mathbf{S}(\mathbb{Q}_{p}^{n})$ onto itself satisfying $(\mathcal{F}%
(\mathcal{F}\varphi))(\xi)=\varphi(-\xi)$. We will also use the notation
$\mathcal{F}_{x\rightarrow\xi}\varphi$ and $\widehat{\varphi}$\ for the
Fourier transform of $\varphi$.

\subsection{Operations on Distributions}

Let $\Omega$\ denote the characteristic function of the interval $\left[
0,1\right]  $. Then $\Delta_{k}\left(  x\right)  :=\Omega\left(
p^{-k}\left\Vert x\right\Vert _{p}\right)  $ is the characteristic function of
the ball $B_{k}^{n}\left(  0\right)  $.

\subsubsection{Convolution}

Given $f,g\in\mathbf{S}^{\prime}\left(  \mathbb{Q}_{p}^{n}\right)  $, their
convolution $f\ast g$ is defined by%
\[
\left\langle f\ast g,\varphi\right\rangle =\lim_{k\rightarrow+\infty
}\left\langle f\left(  y\right)  \times g\left(  x\right)  ,\Delta_{k}\left(
x\right)  \varphi\left(  x+y\right)  \right\rangle
\]
if the limit exists for all $\varphi\in\mathbf{S}\left(  \mathbb{Q}_{p}%
^{n}\right)  $. We recall that if $f\ast g$ exists, then $g\ast f$ exists and
$f\ast g=g\ast f$, see e.g. \cite[Section VII.1]{V-V-Z}. In the case in which
$g=\psi\in\mathbf{S}\left(  \mathbb{Q}_{p}^{n}\right)  $,
\[
f\ast\psi\left(  x\right)  =\left\langle f\left(  y\right)  ,\psi\left(
x-y\right)  \right\rangle ,
\]
see e.g. \cite[Section VII.1]{V-V-Z}.

\subsubsection{Fourier transform}

The Fourier transform $\mathcal{F}\left[  f\right]  $ of a distribution
$f\in\mathbf{S}^{\prime}\left(  \mathbb{Q}_{p}^{n}\right)  $ is defined by%
\[
\left\langle \mathcal{F}\left[  f\right]  ,\varphi\right\rangle =\left\langle
f,\mathcal{F}\left[  \varphi\right]  \right\rangle \text{ for all }\varphi
\in\mathbf{S}\left(  \mathbb{Q}_{p}^{n}\right)  \text{.}%
\]
The Fourier transform $f\rightarrow\mathcal{F}\left[  f\right]  $ is a linear
isomorphism from $\mathbf{S}^{\prime}\left(  \mathbb{Q}_{p}^{n}\right)
$\ onto $\mathbf{S}^{\prime}\left(  \mathbb{Q}_{p}^{n}\right)  $. Furthermore,
$f=\mathcal{F}\left[  \mathcal{F}\left[  f\right]  \left(  -\xi\right)
\right]  $.

\subsubsection{Multiplication}

Set $\delta_{k}\left(  x\right)  :=p^{nk}\Omega\left(  p^{k}\left\Vert
x\right\Vert _{p}\right)  $ for $k\in\mathbb{N}$. Given $f,g\in\mathbf{S}%
^{\prime}\left(  \mathbb{Q}_{p}^{n}\right)  $, their product $f\cdot g$ is
defined by%
\[
\left\langle f\cdot g,\varphi\right\rangle =\lim_{k\rightarrow+\infty
}\left\langle g,\left(  f\ast\delta_{k}\right)  \varphi\right\rangle
\]
if the limit exists for all $\varphi\in\mathbf{S}\left(  \mathbb{Q}_{p}%
^{n}\right)  $. We recall that \ the existence of the product $f\cdot g$ is
equivalent \ to the existence of $\mathcal{F}\left[  f\right]  \ast
\mathcal{F}\left[  g\right]  $. In addition, $\mathcal{F}\left[  f\cdot
g\right]  =\mathcal{F}\left[  f\right]  \ast\mathcal{F}\left[  g\right]  $ and
$\mathcal{F}\left[  f\ast g\right]  =\mathcal{F}\left[  f\right]
\cdot\mathcal{F}\left[  g\right]  $, see e.g. \cite[Section VII.5]{V-V-Z}. The
following result will be used later on.

\begin{lemma}
[{\cite[Section VII.5]{V-V-Z}.}]\label{producdistributions}Let $f,g$ functions
in $L_{loc}^{1}$ for which the function%
\[%
{\displaystyle\int\limits_{\mathbb{Q}_{p}^{n}}}
g\left(  x\right)  \varphi\left(  x\right)  f\left(  x-\xi\right)
d^{n}x\text{,}%
\]
is continuos at $\xi=0\in\mathbb{Q}_{p}^{n}$, for any\ $\varphi\in
\mathbf{S}\left(  \mathbb{Q}_{p}^{n}\right)  $. Then the product $f\cdot g$ is
in $\mathbf{S}^{\prime}\left(  \mathbb{Q}_{p}^{n}\right)  $ and the
distribution is induced by the pointwise product $f\left(  x\right)  g\left(
x\right)  $.
\end{lemma}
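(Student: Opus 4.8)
The plan is to verify the defining limit for the product $f\cdot g$ directly, using the characterization of convolution with $\delta_k$ and then identifying the limit with the pointwise product. First I would unwind the definition: for $\varphi\in\mathbf{S}(\mathbb{Q}_p^n)$ one has
\[
\langle f\cdot g,\varphi\rangle=\lim_{k\to+\infty}\langle g,(f\ast\delta_k)\varphi\rangle,
\]
so the first step is to compute $(f\ast\delta_k)(x)$ explicitly. Since $\delta_k(x)=p^{nk}\Omega(p^k\|x\|_p)$ is (for each $k$) a test function that is $p^{nk}$ times the characteristic function of the ball $B_{-k}^n$, and $\delta_k$ has total mass $1$, the formula $f\ast\psi(x)=\langle f(y),\psi(x-y)\rangle$ from the Preliminaries gives
\[
(f\ast\delta_k)(x)=p^{nk}\int_{B_{-k}^n(x)}f(y)\,d^ny
=p^{nk}\int_{\|\,\xi\,\|_p\le p^{-k}}f(x-\xi)\,d^n\xi,
\]
where I use that $f\in L^1_{loc}$ so the pairing is just integration against $f$.

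Next I would insert this into the bracket and use Fubini (legitimate because $g\varphi\in L^1$ with compact support and $f$ is locally integrable, so the iterated integral over a compact set converges absolutely after choosing $k$ large):
\[
\langle g,(f\ast\delta_k)\varphi\rangle
=\int_{\mathbb{Q}_p^n} g(x)\varphi(x)\Bigl(p^{nk}\!\!\int_{\|\xi\|_p\le p^{-k}}\!\! f(x-\xi)\,d^n\xi\Bigr)d^nx
=p^{nk}\!\!\int_{\|\xi\|_p\le p^{-k}}\! F(\xi)\,d^n\xi,
\]
where $F(\xi):=\int_{\mathbb{Q}_p^n}g(x)\varphi(x)f(x-\xi)\,d^nx$ is exactly the function the hypothesis assumes to be continuous at $\xi=0$. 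Since $p^{nk}$ is the reciprocal of the volume of the ball $\{\|\xi\|_p\le p^{-k}\}$, the right-hand side is the average of $F$ over that ball; as $k\to+\infty$ the ball shrinks to $\{0\}$, and continuity of $F$ at $0$ forces the average to converge to $F(0)=\int_{\mathbb{Q}_p^n}g(x)\varphi(x)f(x)\,d^nx$. Hence the limit defining $f\cdot g$ exists and equals $\langle fg,\varphi\rangle$ with $fg$ the pointwise product, which also shows $fg\in\mathbf{S}'(\mathbb{Q}_p^n)$ since $g\varphi f\in L^1_{loc}$ against every test function.

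The only genuinely delicate point is the Fubini/absolute-convergence justification together with the passage to the limit: one must be careful that for each fixed $\varphi$ with support in some ball $B_N^n$, once $k$ is large the inner integration variable $\xi$ ranges over a small ball, so $x-\xi$ stays in a fixed compact neighborhood of $\mathrm{supp}\,\varphi$ on which $f$ and $g\varphi$ are genuinely $L^1$; this is what makes the interchange of integrals and the ``shrinking ball average'' argument rigorous. Everything else is bookkeeping with the normalization $vol(B_0^n)=1$ and the explicit form of $\delta_k$.
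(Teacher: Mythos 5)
Your argument is correct: unwinding the definition of $f\cdot g$, computing $(f\ast\delta_{k})(x)=p^{nk}\int_{\Vert\xi\Vert_{p}\leq p^{-k}}f(x-\xi)\,d^{n}\xi$, interchanging integrals via the compact-support bound, and recognizing $\langle g,(f\ast\delta_{k})\varphi\rangle$ as the average of the hypothesized continuous function $F(\xi)$ over a shrinking ball is exactly the standard proof of this statement (the paper itself gives no proof, citing \cite[Section VII.5]{V-V-Z}, where the argument is the same averaging one you use). No gaps worth noting; your Fubini justification via the support of $\varphi$ and local integrability of $f$ and $g$ is the only delicate point and you handle it adequately.
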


\subsection{The Hilbert Symbol}

The Hilbert symbol $(a,b)_{p}$, $a,b\in\mathbb{Q}_{p}^{\times}$, is defined by%
\[
(a,b)_{p}=\left\{
\begin{array}
[c]{ll}%
1 & \text{if $ax^{2}+by^{2}-z^{2}=0$ has a solution }(x,y,z)\neq\left(
0,0,0\right)  \text{ in $\mathbb{Q}_{p}^{3}$}\\
-1 & \text{otherwise.}%
\end{array}
\right.
\]
The Hilbert symbol possesses the following properties (see e.g. Theorem 3.3.1
\cite{Ki}):%

\begin{equation}
(a,b)_{p}=(b,a)_{p}\text{ and }(a,c^{2})_{p}=1\text{, for }a,b,c\in
\mathbb{Q}_{p}^{\times}; \label{HS_1}%
\end{equation}

\begin{equation}
(ab,c)_{p}=(a,c)_{p}(b,c)_{p}\text{, for }a,b,c\in\mathbb{Q}_{p}^{\times};
\label{HS_2}%
\end{equation}

\begin{equation}%
\begin{cases}
(a,b)_{p}=1 & \text{for $a,b\in\mathbb{Z}_{p}^{\times}$}\\
(a,p)_{p}=\left(  \dfrac{a_{0}}{p}\right)  & \text{for $a\in\mathbb{Z}%
_{p}^{\times},$}%
\end{cases}
\label{HS_3}%
\end{equation}
where $a_{0}\in\mathbb{Z}$, with $a\equiv a_{0}\operatorname{mod}%
\mathbb{Z}_{p}$, and $\left(  \dfrac{a_{0}}{p}\right)  $ is the Legendre symbol.

Along this article $\left[  \mathbb{Q}_{p}^{\times}\right]  ^{2}$ denotes the
subgroup of squares of $\mathbb{Q}_{p}^{\times}$. We recall that
$\mathbb{Q}_{p}^{\times}/\left[  \mathbb{Q}_{p}^{\times}\right]  ^{2}$ is a
finite group with four elements. We fix $\left\{  1,\epsilon,p,\epsilon
p\right\}  $ to be a set of representatives, here $\epsilon$\ is unit which is
not square.

It is clear that $\left(  a,b\right)  _{p}$ does not change when $a$ and $b$
are multiplied by squares, thus the Hilbert symbol gives rise a map from
$\mathbb{Q}_{p}^{\times}/\left[  \mathbb{Q}_{p}^{\times}\right]  ^{2}%
\times\mathbb{Q}_{p}^{\times}/\left[  \mathbb{Q}_{p}^{\times}\right]  ^{2}$
into $\left\{  1,-1\right\}  $. For a fixed $\beta\in\mathbb{Q}_{p}^{\times}$,
${\LARGE \pi}_{\beta}\left(  t\right)  =\left(  \beta,t\right)  _{p}$ defines
a multiplicative character on $\mathbb{Q}_{p}^{\times}$, the multiplicatively
of ${\LARGE \pi}_{\beta}$ follows from property (\ref{HS_2}).

\subsection{The Weil Constant}

Let%
\begin{equation}
f(x):=a_{1}x_{1}^{2}+a_{2}x_{2}^{2}+\cdots+a_{n}x_{n}^{2},\quad a_{i}%
\in\mathbb{Q}_{p}^{\times},\quad i=1,2,\dots,n, \label{cuadra}%
\end{equation}
be a \textit{quadratic form}. A such quadratic form is characterized by three invariants:

(i) the dimension $n$;

(ii) the discriminant $D=a_{1}a_{2}\cdots a_{n}\operatorname{mod}\left[
\mathbb{Q}_{p}^{\times}\right]  ^{2}$;

(iii) the Hasse invariant ${H=\prod_{i<j}(a_{i},a_{j})_{p}}$.

By \cite[Theoreme 2]{We2}, see also \cite[Theoreme 1.1]{R-S}, there exist a
complex constant $\gamma(f)$ of absolute value one, such that
\begin{align}
&  \int_{\mathbb{Q}_{p}^{n}}\hat{\varphi}(x)\chi(tf(x))d^{n}x\nonumber\\
&  =\gamma(tf)|t|_{p}^{-n/2}|D|_{p}^{-1/2}\int_{\mathbb{Q}_{p}^{n}}%
\varphi(x)\chi\left(  -\frac{1}{t}f{\left(  \frac{x_{1}}{2a_{1}},\dots
,\frac{x_{n}}{2a_{n}}\right)  }\right)  d^{n}x, \label{weil_fun_eq}%
\end{align}
for all $t\in\mathbb{Q}_{p}^{\times}$, where $D=a_{1}a_{2}\cdots a_{n}$.

Since $\gamma(f)=\gamma(a_{1}x_{1}^{2})\cdots\gamma(a_{n}x_{n}^{2})$, see e.g.
\cite[p. 173]{We2}, the calculation of $\gamma(f)$ is reduced to the case
$n=1$. For a $\alpha\in\mathbb{Q}_{p}^{\times}$, we set $\gamma(\alpha
):=\gamma(\alpha x_{1}^{2})$.

\begin{lemma}
\label{lemma0}For a unit $u\in\mathbb{Z}_{p}^{\times}$, with $u\equiv
u_{0}\operatorname{mod}p\mathbb{Z}_{p}$, we have $\gamma(u)=1$ and
$\gamma(up)=\left(  \frac{u_{0}}{p}\right)  \sigma_{p}$, where
\begin{equation}
{\sigma_{p}:=%
\begin{cases}
1 & \text{if }p\equiv1\operatorname{mod}4\\
\sqrt{-1} & \text{if }p\equiv3\operatorname{mod}4.
\end{cases}
} \label{sigma_p}%
\end{equation}

\end{lemma}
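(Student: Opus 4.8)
The plan is to specialize the Weil functional equation (\ref{weil_fun_eq}) to dimension $n=1$ with $t=1$ and $a_{1}=\alpha$, applied to the self-dual test function $\varphi(x)=\Omega\left(\left\vert x\right\vert _{p}\right)$, the characteristic function of $\mathbb{Z}_{p}$. Since $\widehat{\varphi}=\varphi$ under the normalization $vol(\mathbb{Z}_{p})=1$, equation (\ref{weil_fun_eq}) collapses to
\[
\int_{\mathbb{Z}_{p}}\chi\left(\alpha x^{2}\right)d x=\gamma(\alpha)\left\vert \alpha\right\vert _{p}^{-1/2}\int_{\mathbb{Z}_{p}}\chi\left(-\frac{x^{2}}{4\alpha}\right)d x .
\]
When $\alpha=u\in\mathbb{Z}_{p}^{\times}$ one has $ux^{2}\in\mathbb{Z}_{p}$ and, since $p\neq 2$, also $x^{2}/(4u)\in\mathbb{Z}_{p}$ for every $x\in\mathbb{Z}_{p}$; hence both integrands are identically $1$, both integrals equal $1$, and $\left\vert u\right\vert _{p}=1$, which forces $\gamma(u)=1$.

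For $\alpha=up$ with $u\in\mathbb{Z}_{p}^{\times}$ the left side is again $1$, because $upx^{2}\in p\mathbb{Z}_{p}$. For the right side I would write $x=x_{0}+pz$ with $x_{0}\in\{0,\dots,p-1\}$ and $z\in\mathbb{Z}_{p}$; since the cross term $-x_{0}z/(2u)$ lies in $\mathbb{Z}_{p}$ and the term $-pz^{2}/(4u)$ lies in $p\mathbb{Z}_{p}$, one gets $\chi\left(-x^{2}/(4up)\right)=\chi\left(-x_{0}^{2}/(4up)\right)$, so that
\[
\int_{\mathbb{Z}_{p}}\chi\left(-\frac{x^{2}}{4up}\right)d x=\frac{1}{p}\sum_{x_{0}=0}^{p-1}\chi\left(-\frac{x_{0}^{2}}{4up}\right)=\frac{1}{p}\sum_{a=0}^{p-1}\exp\left(\frac{2\pi i}{p}\,b\,a^{2}\right),
\]
where $u\equiv u_{0}\bmod p\mathbb{Z}_{p}$ and $b\equiv-(4u_{0})^{-1}\bmod p$. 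The last sum is a classical quadratic Gauss sum, equal to $\left(\frac{b}{p}\right)g_{p}$ with $g_{p}=\sum_{a=0}^{p-1}\exp\left(2\pi i a^{2}/p\right)$.

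To finish I would invoke the classical evaluation $g_{p}=\sigma_{p}\sqrt{p}$ (with $\sigma_{p}$ as in (\ref{sigma_p})) together with the multiplicativity of the Legendre symbol and the identities $\left(\frac{4}{p}\right)=1$ and $\left(\frac{-1}{p}\right)=\sigma_{p}^{2}$; this identifies $\left(\frac{b}{p}\right)=\left(\frac{-1}{p}\right)\left(\frac{u_{0}}{p}\right)$, whence $\int_{\mathbb{Z}_{p}}\chi\left(-x^{2}/(4up)\right)d x=\left(\frac{-1}{p}\right)\left(\frac{u_{0}}{p}\right)\sigma_{p}\,p^{-1/2}$. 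Since $\left\vert up\right\vert _{p}^{-1/2}=p^{1/2}$ and the left side of the reduced functional equation equals $1$, we obtain $\gamma(up)\left(\frac{-1}{p}\right)\left(\frac{u_{0}}{p}\right)\sigma_{p}=1$, and therefore $\gamma(up)=\left(\frac{-1}{p}\right)\left(\frac{u_{0}}{p}\right)\sigma_{p}^{-1}=\left(\frac{u_{0}}{p}\right)\sigma_{p}$, using $\left(\frac{-1}{p}\right)=\sigma_{p}^{2}$ one last time.

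The only genuinely delicate ingredient is the determination of the argument of the Gauss sum $g_{p}$, namely that $g_{p}=\sqrt{p}$ when $p\equiv 1\bmod 4$ and $g_{p}=i\sqrt{p}$ when $p\equiv 3\bmod 4$; this is Gauss's classical theorem and is simply quoted. Everything else is elementary bookkeeping with residues modulo $p$ and with the properties (\ref{HS_1})--(\ref{HS_3}) of the Legendre (Hilbert) symbol recalled above.
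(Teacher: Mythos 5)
Your proposal is correct and follows essentially the same route as the paper's proof: specialize the Weil functional equation (\ref{weil_fun_eq}) to the self-dual characteristic function of $\mathbb{Z}_{p}$, observe that the left-hand side equals $1$, and evaluate $\int_{\mathbb{Z}_{p}}\chi\left(-x^{2}/(4up)\right)dx$ by reducing it to a quadratic Gauss sum evaluated via Gauss's classical theorem. The only difference is cosmetic: the paper first changes variables $x=2uy$ before reducing modulo $p$, while you reduce directly and absorb the factor $-(4u_{0})^{-1}$ through the twisted Gauss sum identity, which yields the same constant.
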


\begin{proof}
Take $\varphi(x)$ to be the characteristic function of $\mathbb{Z}_{p}$ and
$u\in\mathbb{Z}_{p}^{\times}$, by (\ref{weil_fun_eq})
\begin{align*}
\int_{\mathbb{Q}_{p}}\hat{\varphi}(x)\chi(ux^{2})dx  &  =\gamma(u)|u|_{p}%
^{-1/2}\int_{\mathbb{Q}_{p}}\varphi(x)\chi\left(  -\frac{x^{2}}{4u}\right)
dx\\
\int_{\mathbb{Z}_{p}}dx  &  =\gamma(u).
\end{align*}
In the case $up$ with $u\in\mathbb{Z}_{p}^{\times}$, by applying
(\ref{weil_fun_eq}) we have%
\begin{align*}
\int_{\mathbb{Q}_{p}}\hat{\varphi}(x)\chi(upx^{2})dx  &  =\gamma
(up)|up|_{p}^{-1/2}\int_{\mathbb{Q}_{p}}\varphi(x)\chi\left(  -\frac{x^{2}%
}{4pu}\right)  dx\\
\int_{\mathbb{Z}_{p}}\chi(upx^{2})dx  &  =\gamma(up)p^{1/2}\int_{\mathbb{Z}%
_{p}}\chi\left(  -\frac{x^{2}}{4pu}\right)  dx\\
1  &  =\gamma(up)p^{1/2}\int_{\mathbb{Z}_{p}}\chi\left(  -\frac{x^{2}}%
{4pu}\right)  dx.
\end{align*}

If $z\in\mathbb{Z}_{p}\smallsetminus\left\{  0\right\}  $ we set
$z=z_{0}+z_{1}p+\ldots+z_{k}p^{k}+\ldots$ with $z_{k}=\left\{  0,1,\ldots
,p-1\right\}  $. Now by changing variables ($x=2uy$) in the previous
integral:
\begin{align*}
\int_{\mathbb{Z}_{p}}\chi\left(  -\frac{x^{2}}{4pu}\right)  dx  &
=\int_{|y|_{p}\leq1}\chi\left(  -\frac{uy^{2}}{p}\right)  dy\\
&  =\int_{|y|_{p}=1}\chi\left(  -\frac{uy^{2}}{p}\right)  dy+\int_{|y|_{p}%
<1}\chi\left(  -\frac{uy^{2}}{p}\right)  dy\\
&  =\frac{1}{p}\sum_{y_{0}=1}^{p-1}exp\left\{  -2\pi i\frac{u_{0}y_{0}^{2}}%
{p}\right\}  +\frac{1}{p}=\frac{1}{p}\sum_{y_{0}=0}^{p-1}exp\left\{  -2\pi
i\frac{u_{0}y_{0}^{2}}{p}\right\} \\
&  =p^{-1/2}\left(  \frac{-u_{0}}{p}\right)  \sigma_{p},
\end{align*}
where in the last step we used a result of Gauss on quadratic exponential
sums, see e.g. \cite[p. 55]{V-V-Z}. Therefore
\[
\gamma(up)=\frac{1}{\left(  \frac{-1}{p}\right)  \left(  \frac{u_{0}}%
{p}\right)  \sigma_{p}}=\left(  \frac{u_{0}}{p}\right)  \sigma_{p}.
\]

\end{proof}

The next lemma shows the relation between the constant $\gamma$ and the
Hilbert symbol.

\begin{lemma}
\label{lemma1}With the above notation, the following assertions hold.

(i) $\gamma(-a)\gamma(a)=1$.

(ii) Set $h(x)=x_{1}^{2}-ax_{2}^{2}-bx_{3}^{2}+abx_{4}^{2}$ with
$a,b\in\mathbb{Q}_{p}^{\times}$. Then
\[
\gamma(h)=\gamma(1)\gamma(-a)\gamma(-b)\gamma(ab)=(a,b)_{p}.
\]

(iii) If $n\equiv0\operatorname{mod}2$, then $\gamma(tf)=\gamma(f)(t,D^{\ast
})_{p}$ for any $t\in\mathbb{Q}_{p}^{\times}$, where
\[
D^{\ast}:=(-1)^{\frac{n}{2}}D.
\]

\end{lemma}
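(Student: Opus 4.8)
The plan is to reduce everything to the one-dimensional computations already available in Lemma \ref{lemma0} and the multiplicativity $\gamma(f)=\prod_i\gamma(a_i)$, together with the properties (\ref{HS_1})--(\ref{HS_3}) of the Hilbert symbol. For part (i), I would apply the functional equation (\ref{weil_fun_eq}) twice, or more directly note that $\gamma(a)$ is the constant appearing in $\mathcal F[\chi(a x^2)] = \gamma(a)|2a|_p^{-1/2}|a|_p^{\,?}\chi(-x^2/(4a))$ up to normalization, so that composing the transform for $a$ with the transform for $-a$ (equivalently, replacing $t\mapsto -t$ and using $\mathcal F\mathcal F\varphi(x)=\varphi(-x)$) forces $\gamma(a)\gamma(-a)=1$. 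Concretely it suffices to check this on representatives of $\mathbb Q_p^\times/[\mathbb Q_p^\times]^2$: for $a=u$ a unit, $\gamma(u)=\gamma(-u)=1$ by Lemma \ref{lemma0} (note $-1$ is a unit, $p\ne 2$); for $a=up$, Lemma \ref{lemma0} gives $\gamma(up)\gamma(-up) = \left(\tfrac{u_0}{p}\right)\left(\tfrac{-u_0}{p}\right)\sigma_p^2 = \left(\tfrac{-1}{p}\right)\sigma_p^2 = 1$, using that $\sigma_p^2 = \left(\tfrac{-1}{p}\right)$ (indeed $\sigma_p^2=1$ when $p\equiv 1 \bmod 4$ and $\sigma_p^2=-1$ when $p\equiv 3\bmod 4$, which is exactly the value of the Legendre symbol $\left(\tfrac{-1}{p}\right)$).

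For part (ii), the form $h(x)=x_1^2-ax_2^2-bx_3^2+abx_4^2$ has diagonal coefficients $1,-a,-b,ab$, so multiplicativity gives $\gamma(h)=\gamma(1)\gamma(-a)\gamma(-b)\gamma(ab)$, which is the first claimed equality. For the second equality $\gamma(h)=(a,b)_p$, I would first establish an auxiliary identity expressing $\gamma(\alpha)\gamma(\beta)/\gamma(\alpha\beta)$ (or $\gamma(\alpha)\gamma(\beta)\gamma(-\alpha\beta)\gamma(1)^{-1}$) in terms of the Hilbert symbol $(\alpha,\beta)_p$; this is the classical Weil-index cocycle relation. One clean way: both sides are multiplicative in $a$ and in $b$ modulo squares and both are unchanged when $a,b$ are scaled by squares, so it again suffices to verify $\gamma(1)\gamma(-a)\gamma(-b)\gamma(ab)=(a,b)_p$ on the finitely many pairs $(a,b)$ drawn from the representatives $\{1,\epsilon,p,\epsilon p\}$. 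Each such verification is a short computation: $\gamma(1)=1$, $\gamma$ of a unit is $1$, and $\gamma$ of $p\times(\text{unit})$ is $\left(\tfrac{\text{unit}_0}{p}\right)\sigma_p$ by Lemma \ref{lemma0}; on the Hilbert-symbol side one uses (\ref{HS_3}) and the fact that $(\epsilon,p)_p=-1$, $(p,p)_p=(-1,p)_p=\left(\tfrac{-1}{p}\right)$, etc. Matching the $\sigma_p$ powers (always even, hence reducing to $\pm1$ via $\sigma_p^2=\left(\tfrac{-1}{p}\right)$) against the Legendre-symbol products is the only place requiring care, and I expect this finite case-check to be the main (if routine) obstacle.

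For part (iii), assume $n$ even, write $D=a_1\cdots a_n$ and $t\in\mathbb Q_p^\times$. Multiplicativity gives $\gamma(tf)=\prod_{i=1}^n\gamma(ta_i)$, so the claim amounts to $\prod_{i=1}^n \gamma(ta_i)/\gamma(a_i) = (t,D^\ast)_p$ with $D^\ast=(-1)^{n/2}D$. Here I would invoke the two-variable identity from part (ii) in the form $\gamma(\alpha)\gamma(\beta)=\gamma(1)\gamma(\alpha\beta)(\,?\,)$; more efficiently, pair up the factors: for each $i$, $\gamma(ta_i)\gamma(a_i)^{-1}$ can be rewritten using $\gamma(-a)\gamma(a)=1$ as $\gamma(ta_i)\gamma(-a_i)$, and then the product over an even number of such terms telescopes via the cocycle relation into $(t, (-1)^{n/2}a_1\cdots a_n)_p = (t,D^\ast)_p$. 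The sign $(-1)^{n/2}$ appears precisely because pairing $n$ factors introduces $n/2$ copies of the "$-1$" twist coming from the $\gamma(1)$ and the $\gamma(-a)\gamma(a)=1$ substitutions; tracking that sign bookkeeping is the delicate point, and I would present it by grouping the $n$ factors into $n/2$ pairs and applying the identity from (ii) to each pair. Finally I would remark that $D^\ast$ is well defined modulo $[\mathbb Q_p^\times]^2$ and that $(t,\cdot)_p$ only depends on its argument modulo squares, so the statement is independent of the chosen diagonalization of $f$.
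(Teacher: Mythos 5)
Your argument is essentially correct, but it takes a genuinely different route from the paper's: the paper proves Lemma \ref{lemma1} purely by citation --- (i) and (ii) are quoted from Weil \cite{We2} (nos.\ 25 and 28, pp.\ 173, 176) and (iii) from \cite[Proposition 1.7]{R-S} --- whereas you derive all three statements from the explicit one-dimensional values in Lemma \ref{lemma0}, using that both $\gamma$ and the Hilbert symbol depend only on square classes. Your key computations do check out: $\sigma_p^2=\left(\frac{-1}{p}\right)$ handles the classes $up$ in (i); the finite check over pairs from $\{1,\epsilon,p,\epsilon p\}$ verifies (ii) (I confirmed several of the nontrivial cases, e.g. $a=\epsilon$, $b=p$ and $a=b=p$); and for (iii), the identity obtained from (ii) by substituting $a\mapsto -a$, $b\mapsto -b$, namely $\gamma(\alpha)\gamma(\beta)\gamma(\alpha\beta)=(-\alpha,-\beta)_p$, gives $\gamma(t\alpha)\gamma(t\beta)\gamma(\alpha)^{-1}\gamma(\beta)^{-1}=(t,-\alpha\beta)_p$ for each pair of coefficients, and the product over the $n/2$ pairs telescopes to $(t,(-1)^{n/2}D)_p=(t,D^{\ast})_p$, exactly as you sketch. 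What your route buys is a self-contained, elementary proof tailored to diagonal forms over $\mathbb{Q}_p$ with $p\neq 2$ (at the cost of writing out the finite case check and the pairing bookkeeping, which you leave as routine); what the paper's route buys is brevity and the generality of Weil's and Rallis--Schiffmann's results. Two points you should make explicit in a write-up: (a) the invariance $\gamma(ac^2)=\gamma(a)$, i.e. that $\gamma$ descends to $\mathbb{Q}_p^{\times}/[\mathbb{Q}_p^{\times}]^2$, which follows from a change of variables in (\ref{weil_fun_eq}) and is what legitimizes your reduction to representatives; and (b) in (ii) you do not actually need the asserted bimultiplicativity of the left-hand side --- square-class invariance alone reduces the identity to the finite check --- and you should avoid invoking the Weil cocycle relation there, since that relation is in effect what you are proving.
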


\begin{proof}
(i) See \cite[Section No. 25, p. 173 ]{We2}. (ii) See \cite[Section No. 28, p.
176]{We2}. (iii) See \cite[Proposition 1.7]{R-S}.
\end{proof}

\subsection{Local zeta functions}

For $a>0$ and $s\in\mathbb{C}$ we set $a^{s}:=e^{s\ln a}$. Let $f(x)$ be a
quadratic form over $\mathbb{Q}_{p}$ and ${\LARGE \pi}_{\beta}(t):=(\beta
,t)_{p},\quad t\in\mathbb{Q}_{p}^{\times}$ as before. The local function zeta
attached to $\left(  f,{\LARGE \pi}_{\beta}\right)  $ is the distribution
given by
\begin{equation}
Z_{\varphi}(s,{\LARGE \pi}_{\beta},f):=Z_{\varphi}(s,{\LARGE \pi}_{\beta
})=\int_{\mathbb{Q}_{p}^{n}\smallsetminus f^{-1}(0)}{\LARGE \pi}_{\beta
}(f(x))|f(x)|_{p}^{s-n/2}\varphi(x)d^{n}x\text{, \ }\label{zeatfunction}%
\end{equation}
$\varphi\in\mathbf{S}\left(  \mathbb{Q}_{p}^{n}\right)  $\ and
$\operatorname{Re}(s)>\frac{n}{2}$. If $\beta=1$ we use $Z_{\varphi}(s,f)$
instead of $Z_{\varphi}(s,{\LARGE \pi}_{1},f)$. The local zeta functions are
defined for arbitrary polynomials and arbitrary multiplicative characters.
These objects were introduced in the 60's by A. Weil and since then they have
been studied intensively, see e.g. \cite{Igusa}. The local zeta function
$Z_{\varphi}(s,{\LARGE \pi}_{\beta})$ is a distribution on $\mathbf{S}\left(
\mathbb{Q}_{p}^{n}\right)  $\ for $\operatorname{Re}(s)>\frac{n}{2}$, which
admits a meromorphic continuation to the whole complex plane (for arbitrary
$f$ and ${\LARGE \pi}_{\beta}$) such that $Z_{\varphi}(s,{\LARGE \pi}_{\beta
})$ is a rational function of $p^{-s}$, see \cite[Theorem 8.2.1]{Igusa}.

\subsection{Functional equations}

It is well-known that the Fourier transform of the distribution ${\LARGE \pi
}_{\beta}(t)|t|_{p}^{s-1}$ is $\rho({\LARGE \pi}_{\beta},s){\LARGE \pi}%
_{\beta}^{-1}(t)|t|_{p}^{-s}$ i.e.
\begin{equation}
\int_{\mathbb{Q}_{p}^{\times}}\widehat{\varphi}(t){\LARGE \pi}_{\beta
}(t)|t|_{p}^{s-1}dt=\rho({\LARGE \pi}_{\beta},s)\int_{\mathbb{Q}_{p}^{\times}%
}\varphi(t){\LARGE \pi}_{\beta}^{-1}(t)|t|_{p}^{-s}dt, \label{tate1}%
\end{equation}
for all $\varphi(t)\in S(\mathbb{Q}_{p})$, see e.g. \cite[Section
VIII.2]{V-V-Z}. We recall that (\ref{tate1}) is a particular case of the
functional equation for the Iwasawa-Tate local zeta function see e.g.
\cite[Theoerem 2.4.1 and Lemma 2.4.3]{Tate}.

We now compute the factors $\rho(\pi_{\beta},s)$ appearing in (\ref{tate1}).

\begin{lemma}
\label{funro} Set $\mathbb{Q}_{p}^{\times}/\left[  \mathbb{Q}_{p}^{\times
}\right]  ^{2}=$ $\left\{  1,\epsilon,p,\epsilon p\right\}  $ where $\epsilon
$\ is unit which is not square. Then

(i) ${\rho(}{\LARGE \pi}{_{1},}${$s$}${)=\frac{1-p^{s-1}}{1-p^{-s}}};$

(ii) ${\rho(}{\LARGE \pi}{_{\epsilon},}${$s$}${)=\frac{1+p^{s-1}}{1+p^{-s}}}$;

(iii) ${\rho(}{\LARGE \pi}{_{\eta},}${$s$}${)=\pm\sigma_{p}p^{s-\frac{1}{2}}}%
$, $\eta=p,\epsilon p$ with ${\sigma_{p}}$\ as in (\ref{sigma_p}).
\end{lemma}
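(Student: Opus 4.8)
The plan is to compute each factor $\rho({\LARGE \pi}_{\beta},s)$ directly from the defining functional equation (\ref{tate1}) by feeding in a cleverly chosen test function $\varphi$ and evaluating both sides explicitly. Since the factor $\rho({\LARGE \pi}_{\beta},s)$ does not depend on $\varphi$ (provided the integrals on both sides are nonzero), it suffices to make one good choice in each case. For the unramified characters ${\LARGE \pi}_{1}$ (the trivial character) and ${\LARGE \pi}_{\epsilon}$ (the nontrivial unramified quadratic character, since $(\epsilon,t)_{p}$ depends only on $\mathrm{ord}(t)\bmod 2$ by (\ref{HS_3})), I would take $\varphi=\Omega(\|\cdot\|_{p})$, the characteristic function of $\mathbb{Z}_{p}$, which is its own Fourier transform. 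For the ramified characters ${\LARGE \pi}_{p}$ and ${\LARGE \pi}_{\epsilon p}$ (which are nontrivial on $\mathbb{Z}_{p}^{\times}$, again by (\ref{HS_3})), I would instead take $\varphi$ to be a multiple of the characteristic function of a coset $1+p\mathbb{Z}_{p}$ (or more safely $\varphi = \Omega(p|\cdot - 1|_p)$ or a Fourier dual thereof), so that the right side picks out a single point and the left side becomes a Gauss sum.

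For (i) and (ii): with $\varphi=\widehat{\varphi}=\Omega$, the right-hand side of (\ref{tate1}) is $\int_{\mathbb{Z}_{p}\setminus\{0\}}{\LARGE \pi}_{\beta}^{-1}(t)|t|_{p}^{-s}\,dt$ and the left-hand side is $\int_{\mathbb{Z}_{p}\setminus\{0\}}{\LARGE \pi}_{\beta}(t)|t|_{p}^{s-1}\,dt$. Decomposing $\mathbb{Z}_{p}\setminus\{0\}$ into the shells $p^{k}\mathbb{Z}_{p}^{\times}$, $k\ge 0$, on which $|t|_{p}=p^{-k}$ and ${\LARGE \pi}_{\beta}(t)={\LARGE \pi}_{\beta}(p)^{k}$ (as ${\LARGE \pi}_{\beta}$ is trivial on units here), each integral is a geometric series: with $\mathrm{vol}(p^{k}\mathbb{Z}_{p}^{\times})=p^{-k}(1-p^{-1})$, the left side equals $(1-p^{-1})\sum_{k\ge 0}({\LARGE \pi}_{\beta}(p)\,p^{-s})^{k}$ and the right side equals $(1-p^{-1})\sum_{k\ge 0}({\LARGE \pi}_{\beta}(p)^{-1}p^{s-1})^{k}$. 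Taking the ratio, $\rho({\LARGE \pi}_{\beta},s)=\dfrac{1-{\LARGE \pi}_{\beta}(p)^{-1}p^{s-1}}{1-{\LARGE \pi}_{\beta}(p)\,p^{-s}}$; substituting ${\LARGE \pi}_{1}(p)=1$ gives (i), and ${\LARGE \pi}_{\epsilon}(p)=(\epsilon,p)_{p}=\left(\frac{\epsilon_{0}}{p}\right)=-1$ gives (ii). (One should note the convergence regime $\mathrm{Re}(s)>0$ makes the left side converge, and then both sides are identified by meromorphic continuation.)

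For (iii): here ${\LARGE \pi}_{\eta}$ is ramified, so on each shell $p^{k}\mathbb{Z}_{p}^{\times}$ the character is nontrivial on units and the naive radial integral vanishes; one must exploit cancellation. Choosing $\varphi$ supported near a unit so that its Fourier transform involves $\chi$ over $\mathbb{Z}_{p}^{\times}$, the left-hand side of (\ref{tate1}) reduces (after restricting to $\mathrm{ord}(t)=-1$, the only surviving shell by the standard vanishing-of-Gauss-sum argument) to a normalized quadratic Gauss sum $\frac{1}{p}\sum_{y=1}^{p-1}\left(\frac{y}{p}\right)\chi(\ldots)$-type expression, which by the classical Gauss evaluation — exactly the result already invoked in the proof of Lemma \ref{lemma0} — equals $p^{-1/2}$ times $\left(\frac{\pm1}{p}\right)\sigma_{p}$; the right-hand side reduces to a single evaluation of $\varphi$. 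Comparing gives $\rho({\LARGE \pi}_{\eta},s)=\pm\sigma_{p}p^{s-1/2}$, the sign depending on $\eta=p$ versus $\eta=\epsilon p$ through the Legendre symbol. An alternative, perhaps cleaner route I would keep in reserve: relate $\rho({\LARGE \pi}_{\eta},s)$ to the one-dimensional Weil constant $\gamma(\eta)$ already computed in Lemma \ref{lemma0} via $\rho$ being essentially the Fourier transform of $|t|_p^{s-1}{\LARGE \pi}_\eta(t)$ and tracking how $\gamma(\eta) = \left(\frac{u_0}{p}\right)\sigma_p$ enters — this would let me read off the sign and the $\sigma_p$ immediately.

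The main obstacle is case (iii): getting the constant and its sign exactly right requires careful bookkeeping of the Gauss sum normalization (powers of $p$, the role of $\left(\frac{-1}{p}\right)$, and which residue $y_0$ ranges over), together with a correct choice of test function whose Fourier transform isolates the $\mathrm{ord}=-1$ shell cleanly; the unramified cases (i)–(ii) are routine geometric-series computations by comparison.
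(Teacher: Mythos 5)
Your computations for (i) and (ii) are exactly the paper's: the paper also plugs the characteristic function of $\mathbb{Z}_{p}$ (its own Fourier transform) into (\ref{tate1}) and takes the ratio of the two radial integrals, using ${\LARGE \pi}_{\epsilon}(t)=(-1)^{\mathrm{ord}(t)}$ for (ii); your derivation of that last fact from (\ref{HS_3}) via ${\LARGE \pi}_{\epsilon}(p)=\left(\frac{\epsilon_{0}}{p}\right)=-1$ is a harmless substitute for the paper's citation. Where you genuinely diverge is (iii): the paper does not compute anything there, but quotes the formula ${\rho(}{\LARGE \pi}{_{\eta},s)=\pm\sqrt{sgn_{\eta}(-1)}\,p^{s-\frac{1}{2}}}$ from Vladimirov--Volovich--Zelenov and only converts the constant, using $(t,\eta)_{p}=sgn_{\eta}(t)$, so that $\sqrt{sgn_{\eta}(-1)}=\sqrt{(\eta,-1)_{p}}=\sqrt{\left(\frac{-1}{p}\right)}=\sigma_{p}$. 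Your plan is instead a self-contained computation: take $\varphi$ the characteristic function of $1+p\mathbb{Z}_{p}$ (or its Fourier dual), note that for the ramified character $\int_{\mathbb{Z}_{p}^{\times}}{\LARGE \pi}_{\eta}(u)\,du=0$ kills every shell except $\mathrm{ord}(t)=-1$, and evaluate the surviving quadratic Gauss sum by the same classical Gauss result already invoked in the proof of Lemma \ref{lemma0}. That route is workable and, if carried through, proves more than the stated lemma (it pins the actual sign, which the statement leaves as $\pm$, so the ``careful bookkeeping'' you flag as the main obstacle is not even needed for the claim as written); the cost is that you carry the conductor-$p$ epsilon-factor computation yourself, whereas the paper's citation-plus-Hilbert-symbol conversion is shorter and is the intended argument. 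Your fallback of reading the constant off the Weil constant $\gamma(\eta)$ of Lemma \ref{lemma0} would need an extra identity linking $\rho$ and $\gamma$ that neither you nor the paper states, so the Gauss-sum route is the one to complete if you want a full proof.
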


\begin{proof}
(i) Take $\varphi(t)$ to be the characteristic function of $\mathbb{Z}_{p}$ in
(\ref{tate1}), then%

\[
\rho(\pi_{1},s)=\frac{\int_{\mathbb{Z}_{p}\smallsetminus\left\{  0\right\}
}|t|_{p}^{s-1}dt}{\int_{\mathbb{Z}_{p}\smallsetminus\left\{  0\right\}
}|t|_{p}^{-s}dt}=\frac{1-p^{s-1}}{1-p^{-s}}.
\]

(ii) Note that ${\LARGE \pi}_{\epsilon}(t)=(-1)^{ord(t)}$, see \cite[Lemma on
p. 130]{V-V-Z}, by taking $\varphi(t)$ to be the characteristic function of
$\mathbb{Z}_{p}$ in (\ref{tate1}), we have
\[
\rho({\LARGE \pi}_{\epsilon},s)=\frac{1+p^{s-1}}{1+p^{-s}}.
\]

(iii) Set
\[
\mathbb{Q}_{p,\eta}^{\times}:=\left\{  x\in\mathbb{Q}_{p}^{\times}\mid
x=a^{2}-\eta b^{2},\quad a,b\in\mathbb{Q}_{p}\right\}
\]

and
\[
sgn_{\eta}(x):=%
\begin{cases}
1 & \text{if $x\in\mathbb{Q}_{p,\eta}^{\times}$}\\
-1 & \text{if $x\notin\mathbb{Q}_{p,\eta}^{\times}.$}%
\end{cases}
\]

In \cite[p. 129]{V-V-Z} is proved that$\ {\rho(}{\LARGE \pi}{_{\eta},}${$s$%
}${)=\pm\sqrt{sgn_{\eta}(-1)}p^{s-\frac{1}{2}}}$ for $\eta=p,\epsilon p$.
Since $(t,\eta)_{p}=sgn_{\eta}(t)$ we have
\[
{\pm\sqrt{sgn_{\eta}(-1)}=\pm\sqrt{(\eta,-1)_{p}}=\pm\sqrt{\left(  \frac
{-1}{p}\right)  }=\pm\sigma_{p}.}%
\]

\end{proof}

Set ${f^{\ast}(x):=f\left(  \frac{x_{1}}{a_{1}},\dots,\frac{x_{n}}{a_{n}%
}\right)  }${, and}
\[
Z_{\varphi}^{\ast}(s,{\LARGE \pi}_{\beta}):=\int_{\mathbb{Q}_{p}%
^{n}\smallsetminus f^{\ast-1}(0)}{\LARGE \pi}_{\beta}(f^{\ast}(x))|f^{\ast
}(x)|_{p}^{s-n/2}\varphi(x)d^{n}x.
\]

\begin{theorem}
[{\cite[Theoreme 22-13]{R-S}}]\label{funcional} If $n\equiv0\operatorname{mod}%
2$, then $Z_{\varphi}(s)$ satisfies
\[
Z_{\hat{\varphi}}(s)=\rho({\LARGE \pi}_{1},s-\frac{n}{2}+1)\rho({\LARGE \pi
}_{D^{\ast}},s)|D|_{p}^{-1/2}\gamma(f)Z_{\varphi}^{\ast}(-s+n/2,{\LARGE \pi
}_{D^{\ast}})
\]
for any $\varphi\in\mathbf{S}\left(  \mathbb{Q}_{p}^{n}\right)  $.
\end{theorem}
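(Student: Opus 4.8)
The plan is to derive the functional equation of Theorem~\ref{funcional} from the Weil functional equation \eqref{weil_fun_eq} combined with Tate's local functional equation \eqref{tate1} for $\rho(\pi_\beta,s)$, by a Mellin-transform (polar-coordinates) argument. The point is that the left-hand side $Z_{\hat\varphi}(s)$ can be rewritten, using the definition of the Fourier transform on $\mathbb{Q}_p^n$ together with \eqref{weil_fun_eq}, as an integral of $Z_\varphi^\ast$ against an oscillatory factor in the ``radial'' variable $t\in\mathbb{Q}_p^\times$; then \eqref{tate1} applied twice turns that oscillatory integral into the product of the two $\rho$-factors.

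First I would recall the Mellin / ``polar decomposition'' device for quadratic forms: for a suitable test function $\Psi$ on $\mathbb{Q}_p^n$ one writes
\[
\int_{\mathbb{Q}_p^n\smallsetminus f^{-1}(0)} \pi_\beta(f(x))\,|f(x)|_p^{s-n/2}\Psi(x)\,d^nx
= \frac{1}{1-p^{-1}}\int_{\mathbb{Q}_p^\times} \pi_\beta(u)\,|u|_p^{s-n/2}\,\big(\text{fiber integral of }\Psi\text{ over }f=u\big)\,d^\times u,
\]
which, after relating $d^\times u$ to Haar measure, expresses $Z_\Psi(s,\pi_\beta)$ in terms of a one-variable zeta integral in the variable $u=f(x)$. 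The key observation is that the ``fiber integral'' of $\widehat\varphi$ over $\{f=u\}$ is computed, via \eqref{weil_fun_eq}, by an integral of the form $\int_{\mathbb{Q}_p}(\cdots)\chi(tu)\,dt$ against $Z_\varphi^\ast(-s+n/2,\pi_{D^\ast})$-type data; concretely, one tests \eqref{weil_fun_eq} against $\varphi$, multiplies both sides by $\pi_\beta(t)|t|_p^{z}$ for appropriate $z$, and integrates over $t\in\mathbb{Q}_p^\times$. On the right of \eqref{weil_fun_eq} the $|t|_p^{-n/2}|D|_p^{-1/2}\gamma(tf)$ factor appears, and by Lemma~\ref{lemma1}(iii) we have $\gamma(tf)=\gamma(f)(t,D^\ast)_p=\gamma(f)\pi_{D^\ast}(t)$, which is precisely why the character $\pi_{D^\ast}$ enters the statement. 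After replacing $\chi(-\tfrac1t f(\tfrac{x_1}{2a_1},\dots))$ appropriately and changing $t\mapsto t^{-1}$ (and absorbing the $2a_i$ into a square, using \eqref{HS_1}), the inner $t$-integral is exactly a Tate integral to which \eqref{tate1} applies, producing one factor $\rho(\pi_{D^\ast},s)$; a second, bookkeeping application of \eqref{tate1} with the trivial character — arising from the extra $|t|_p$ discrepancy between the $u=f(x)$ variable's natural exponent $s-n/2$ and the shifted exponent forced by $\widehat\varphi$ — produces the factor $\rho(\pi_1,s-\tfrac n2+1)$. Collecting the $|D|_p^{-1/2}$ and $\gamma(f)$ constants from Lemma~\ref{lemma1} and \eqref{weil_fun_eq} gives the asserted identity, initially for $\operatorname{Re}(s)$ in a strip where all integrals converge absolutely, and then everywhere by the meromorphic continuation (rational in $p^{-s}$) recalled after \eqref{zeatfunction}.

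I expect the main obstacle to be the careful matching of normalizations: keeping track of the exponent shift $s \leftrightarrow s - n/2$ built into the definition \eqref{zeatfunction} of $Z_\varphi(s,\pi_\beta)$, the factor $\tfrac{1}{1-p^{-1}}$ relating $d^\times u$ to the pushforward of $d^nx$, the substitution $t\mapsto 1/t$ (which flips $\pi_\beta\mapsto\pi_\beta^{-1}=\pi_\beta$ and $|t|_p^z\mapsto|t|_p^{-z}$, matching the two sides of \eqref{tate1}), and the passage from $f$ to $f^\ast$ via $x_i\mapsto x_i/(2a_i)$ — here one must check that the stray factor $\prod|2a_i|_p$ is absorbed into $|D|_p$ and that the $\chi$-argument really becomes $f^\ast$, using $p\neq 2$ so that $2$ is a unit and $|2a_i|_p=|a_i|_p$. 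Since the excerpt explicitly credits \cite{FSato3} for the ideas and \cite[Theoreme 22-13]{R-S} for the statement, I anticipate the actual write-up will largely reproduce the Rallis--Schiffmann/Sato computation, with the bulk of the work being these normalization checks rather than any conceptually new input; as a sanity check one can verify the $n=2$, $f=x_1^2+x_2^2$ case directly and confirm both $\rho$-factors come out with the right arguments.
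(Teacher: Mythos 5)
Your plan is sound in outline, but you should know that the paper does not actually carry out any of it: its entire proof of Theorem \ref{funcional} is the observation that the statement is a particular case of formula 2-20 of Rallis--Schiffmann \cite{R-S}, together with the remark that the constant differs from the one in \cite{R-S} only because of a different normalization of the Haar measure (here $\mathrm{vol}(B_{0}^{n})=1$). So where you sketch a Mellin/fiber-integral derivation from the Weil relation (\ref{weil_fun_eq}) and Tate's equation (\ref{tate1}), the paper simply imports the result; your guess that ``the actual write-up will largely reproduce the Rallis--Schiffmann/Sato computation'' is, if anything, an overestimate of how much is written. Your sketch does identify the right ingredients of the cited computation: Lemma \ref{lemma1}(iii), i.e.\ $\gamma(tf)=\gamma(f)\pi_{D^{\ast}}(t)$, is exactly why the character $\pi_{D^{\ast}}$ enters, and the two $\rho$-factors do come from Tate-type functional equations applied to the $t$-integrated Weil relation. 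But as written it is a plan rather than a proof: the pushforward (fiber-integral) identity, the justification for interchanging the $t$- and $x$-integrations on a common strip of convergence, and the bookkeeping that pins the arguments to exactly $s-\frac{n}{2}+1$ and $s$ are asserted, not verified. If the theorem is treated as a quoted result (as the paper treats it), your proposal is acceptable and in fact more informative than the paper's proof; if a self-contained proof is wanted, those steps still have to be executed, and they constitute the substance of \cite{R-S} and \cite{FSato3} rather than mere normalization checks.
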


\begin{proof}
The announced formula is a particular case of formula 2-20 in \cite{R-S}. We
note that our functional equation equals up to a constant to the functional
equation in \cite{R-S}, this is due fact that we used a different
normalization for the Haar measure.
\end{proof}

\subsection{Some explicit functional equations}

\begin{corollary}
\label{Cor_Fun_Eq}Let $f(x)$ be as before. Assume that $n\equiv
0\operatorname{mod}2$ and that $D^{\ast}$ is a square. Then%
\[
Z_{\hat{\varphi}}(s)=\rho({\LARGE \pi}_{1},s-\frac{n}{2}+1)\rho({\LARGE \pi
}_{1},s)|D|_{p}^{-1/2}\gamma(f)Z_{\varphi}^{\ast}(-s+n/2)
\]
for any $\varphi\in\mathbf{S}\left(  \mathbb{Q}_{p}^{n}\right)  $.
\end{corollary}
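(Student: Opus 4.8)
The plan is to specialize Theorem~\ref{funcional} to the hypothesis that $D^{\ast}$ is a square in $\mathbb{Q}_{p}^{\times}$. The entire proof is essentially a substitution, so the only thing to check is that each factor in the functional equation behaves correctly when $D^{\ast} \in \left[\mathbb{Q}_{p}^{\times}\right]^{2}$.

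First I would observe that the character ${\LARGE \pi}_{D^{\ast}}(t) = (D^{\ast}, t)_{p}$ depends only on the class of $D^{\ast}$ in $\mathbb{Q}_{p}^{\times}/\left[\mathbb{Q}_{p}^{\times}\right]^{2}$, by property~(\ref{HS_1}): since $(a, c^{2})_{p} = 1$ for all $a, c \in \mathbb{Q}_{p}^{\times}$ and the Hilbert symbol is symmetric, $D^{\ast}$ being a square forces ${\LARGE \pi}_{D^{\ast}}(t) = (D^{\ast}, t)_{p} = 1$ for every $t \in \mathbb{Q}_{p}^{\times}$, i.e.\ ${\LARGE \pi}_{D^{\ast}} = {\LARGE \pi}_{1}$ as a multiplicative character. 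Consequently the gamma factor $\rho({\LARGE \pi}_{D^{\ast}}, s)$ appearing in Theorem~\ref{funcional} equals $\rho({\LARGE \pi}_{1}, s)$, and the twisted zeta function $Z_{\varphi}^{\ast}(-s + n/2, {\LARGE \pi}_{D^{\ast}})$ reduces to $Z_{\varphi}^{\ast}(-s + n/2, {\LARGE \pi}_{1})$, which by the notational convention fixed just after~(\ref{zeatfunction}) is written simply $Z_{\varphi}^{\ast}(-s + n/2)$.

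Then I would substitute these identifications directly into the statement of Theorem~\ref{funcional}. The factor $\rho({\LARGE \pi}_{1}, s - \frac{n}{2} + 1)$, the factor $|D|_{p}^{-1/2}$, and the Weil constant $\gamma(f)$ are unchanged, so the functional equation becomes
\[
Z_{\hat{\varphi}}(s) = \rho({\LARGE \pi}_{1}, s - \tfrac{n}{2} + 1)\,\rho({\LARGE \pi}_{1}, s)\,|D|_{p}^{-1/2}\,\gamma(f)\,Z_{\varphi}^{\ast}(-s + n/2),
\]
which is exactly the claimed formula. This holds for every $\varphi \in \mathbf{S}(\mathbb{Q}_{p}^{n})$ because the identity in Theorem~\ref{funcional} does.

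There is essentially no obstacle here; the corollary is a direct reading of the theorem under the stated hypothesis. The only point requiring a line of justification is the reduction ${\LARGE \pi}_{D^{\ast}} = {\LARGE \pi}_{1}$, and if one wants to be thorough one could also note, via Lemma~\ref{funro}(i), the explicit value $\rho({\LARGE \pi}_{1}, s) = \frac{1 - p^{s-1}}{1 - p^{-s}}$, so that the right-hand side can be written out completely in terms of $p^{-s}$ if desired; but this is optional and not needed for the statement as phrased.
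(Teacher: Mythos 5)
Your proposal is correct and matches the paper's (implicit) reasoning: the corollary is stated without proof precisely because it is the immediate specialization of Theorem \ref{funcional} obtained by noting that $(D^{\ast},t)_{p}=1$ when $D^{\ast}$ is a square, so ${\LARGE \pi}_{D^{\ast}}={\LARGE \pi}_{1}$ and the twisted zeta function reduces to $Z_{\varphi}^{\ast}(-s+n/2)$. Nothing further is needed.
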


\begin{proposition}
\label{eqfun2} If $f(x)=x_{1}^{2}-\eta x_{2}^{2}$, $\eta=\epsilon,p,p\epsilon
$, then
\begin{align*}
&
{\displaystyle\int\limits_{\mathbb{Q}_{p}^{2}\smallsetminus\left\{  0\right\}
}}
|f(x)|_{p}^{s-1}\widehat{\varphi}(x)d^{2}x\\
&  =%
\begin{cases}
\dfrac{1-p^{2(s-1)}}{1-p^{-2s}}%
{\displaystyle\int\limits_{\mathbb{Q}_{p}^{2}\smallsetminus\left\{  0\right\}
}}
{|\eta x_{1}^{2}-x_{2}^{2}|_{p}^{-s}\varphi(x)d}^{2}x & \text{if }%
\eta=\epsilon\\
\dfrac{1-p^{s-1}}{1-p^{-s}}%
{\displaystyle\int\limits_{\mathbb{Q}_{p}^{2}\smallsetminus\left\{  0\right\}
}}
{|\eta x_{1}^{2}-x_{2}^{2}|_{p}^{-s}\varphi(x)d}^{2}x & \text{if }%
\eta=p,p\epsilon.
\end{cases}
\end{align*}

\end{proposition}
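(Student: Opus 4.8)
The plan is to specialise the functional equation of Theorem \ref{funcional} to $n=2$ and $f(x)=x_{1}^{2}-\eta x_{2}^{2}$, and then to simplify the resulting gamma factors. Write $f(x)=a_{1}x_{1}^{2}+a_{2}x_{2}^{2}$ with $a_{1}=1$, $a_{2}=-\eta$, so that $D=a_{1}a_{2}=-\eta$ and $D^{\ast}=(-1)^{n/2}D=\eta$. Since $\eta\in\{\epsilon,p,p\epsilon\}$ is not a square, $f$ is anisotropic, hence $f^{-1}(0)=\{0\}$, and, because ${\LARGE \pi}_{1}\equiv 1$, the left-hand side of the proposition is exactly $Z_{\widehat{\varphi}}(s)$ in the notation of (\ref{zeatfunction}) (the equalities below are valid for $s$ in the common strip of convergence of the integrals involved, and then for all $s$ by meromorphic continuation). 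Using $|D|_{p}=|\eta|_{p}$ and $s-\frac{n}{2}+1=s$, Theorem \ref{funcional} will give
\[
Z_{\widehat{\varphi}}(s)=\rho({\LARGE \pi}_{1},s)\,\rho({\LARGE \pi}_{\eta},s)\,|\eta|_{p}^{-1/2}\,\gamma(f)\,Z_{\varphi}^{\ast}\!\left(-s+1,{\LARGE \pi}_{\eta}\right).
\]

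The key point is that the character ${\LARGE \pi}_{\eta}$ disappears from $Z^{\ast}_{\varphi}$. Here $f^{\ast}(x)=f(x_{1}/a_{1},x_{2}/a_{2})=x_{1}^{2}-\eta^{-1}x_{2}^{2}$, and $f^{\ast}(x)=0$ only for $x=0$ (again because $\eta$ is not a square). For $x\in\mathbb{Q}_{p}^{2}\smallsetminus\{0\}$ the identity $\eta^{-1}x_{2}^{2}+f^{\ast}(x)\cdot 1^{2}=x_{1}^{2}$ displays a nontrivial solution of $\eta^{-1}u^{2}+f^{\ast}(x)v^{2}=w^{2}$, whence $(\eta^{-1},f^{\ast}(x))_{p}=1$; and since $\eta^{-1}$ differs from $\eta$ by the square $\eta^{-2}$, properties (\ref{HS_1})--(\ref{HS_2}) give ${\LARGE \pi}_{\eta}(f^{\ast}(x))=(\eta,f^{\ast}(x))_{p}=1$ on the domain of integration. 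As the exponent in $Z^{\ast}_{\varphi}(-s+1,\cdot)$ equals $(-s+1)-\frac{n}{2}=-s$, and $|f^{\ast}(x)|_{p}=|\eta|_{p}^{-1}|\eta x_{1}^{2}-x_{2}^{2}|_{p}$, this yields
\[
Z_{\varphi}^{\ast}\!\left(-s+1,{\LARGE \pi}_{\eta}\right)=\int_{\mathbb{Q}_{p}^{2}\smallsetminus\{0\}}|f^{\ast}(x)|_{p}^{-s}\varphi(x)\,d^{2}x=|\eta|_{p}^{s}\int_{\mathbb{Q}_{p}^{2}\smallsetminus\{0\}}|\eta x_{1}^{2}-x_{2}^{2}|_{p}^{-s}\varphi(x)\,d^{2}x.
\]

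It then remains to evaluate the scalar $\rho({\LARGE \pi}_{1},s)\,\rho({\LARGE \pi}_{\eta},s)\,|\eta|_{p}^{-1/2}\,\gamma(f)\,|\eta|_{p}^{s}$. Always $\rho({\LARGE \pi}_{1},s)=\frac{1-p^{s-1}}{1-p^{-s}}$ by Lemma \ref{funro}(i), and $\gamma(f)=\gamma(1)\gamma(-\eta)$. If $\eta=\epsilon$, then $\eta$ and $-\eta$ are units, so $|\eta|_{p}=1$, $\gamma(f)=1$ by Lemma \ref{lemma0}, and $\rho({\LARGE \pi}_{\epsilon},s)=\frac{1+p^{s-1}}{1+p^{-s}}$ by Lemma \ref{funro}(ii); the scalar becomes $\frac{(1-p^{s-1})(1+p^{s-1})}{(1-p^{-s})(1+p^{-s})}=\frac{1-p^{2(s-1)}}{1-p^{-2s}}$, which is the first case. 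If $\eta\in\{p,p\epsilon\}$, then $|\eta|_{p}^{-1/2}=p^{1/2}$, $|\eta|_{p}^{s}=p^{-s}$, $\rho({\LARGE \pi}_{\eta},s)=\pm\sigma_{p}p^{s-1/2}$ by Lemma \ref{funro}(iii), and by Lemma \ref{lemma0} together with (\ref{HS_3}), $\gamma(f)=\gamma(-\eta)$ equals $\left(\frac{-1}{p}\right)\sigma_{p}$ when $\eta=p$ and $-\left(\frac{-1}{p}\right)\sigma_{p}$ when $\eta=p\epsilon$; the powers of $p$ cancel, $p^{s-1/2}\cdot p^{1/2}\cdot p^{-s}=1$, and the surviving unit-modulus factor $(\pm\sigma_{p})\gamma(-\eta)=\pm\left(\frac{-1}{p}\right)\sigma_{p}^{2}$ equals $\pm 1$ because $\sigma_{p}^{2}=\left(\frac{-1}{p}\right)$ by (\ref{sigma_p}); choosing the correct sign it is $+1$, so the scalar is $\frac{1-p^{s-1}}{1-p^{-s}}$, which is the second case.

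The delicate point will be the sign in the last step: one has to make sure that $\rho({\LARGE \pi}_{\eta},s)$ and $\gamma(f)$ combine to $+1$ rather than $-1$ for $\eta=p$ and $\eta=p\epsilon$. This requires the exact value of the Weil index and of the $\rho$-factor — equivalently the classical Gauss-sum identity $\sum_{a=1}^{p-1}\left(\frac{a}{p}\right)e^{2\pi i a/p}=\sigma_{p}\sqrt{p}$ — which pins the sign in Lemma \ref{funro}(iii) to $+$ when $\eta=p$ and to $-$ when $\eta=p\epsilon$. Everything else — recognising the left-hand side as $Z_{\widehat{\varphi}}(s)$, the vanishing of the character ${\LARGE \pi}_{\eta}(f^{\ast}(x))=1$, and the cancellation of the powers of $p$ and of $|\eta|_{p}$ — is routine bookkeeping once $D$, $D^{\ast}$ and $f^{\ast}$ have been written down.
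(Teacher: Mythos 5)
Your proposal is correct and follows essentially the same route as the paper: specialize Theorem \ref{funcional} with $D^{\ast}=\eta$, observe that ${\LARGE \pi}_{\eta}(f^{\ast}(x))=1$ on the domain of integration, and evaluate the resulting constant via Lemmas \ref{lemma0} and \ref{funro}. The only difference is cosmetic sign bookkeeping for $\eta=p,\epsilon p$: the paper writes $\gamma(f)=\gamma(-\eta)=\tfrac{1}{\pm\sigma_{p}}$ (via Lemma \ref{lemma1}(i)) and cancels it against $\rho({\LARGE \pi}_{\eta},s)=\pm\sigma_{p}p^{s-\frac{1}{2}}$, while you compute $\gamma(-\eta)$ directly from Lemma \ref{lemma0} and pin the sign of $\rho({\LARGE \pi}_{\eta},s)$ by the Gauss sum; your sign assignments ($+$ for $\eta=p$, $-$ for $\eta=\epsilon p$) are indeed the correct ones, so the two computations agree.
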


\begin{proof}
Since $D^{\ast}=-D=\eta$ and ${\LARGE \pi}_{-D}(f^{\ast}(x))=(\eta,x_{1}%
^{2}-\eta x_{2}^{2})_{p}=1$. By Theorem \ref{funcional}, we have
\[
Z_{\widehat{\varphi}}(s)=\rho({\LARGE \pi}_{1},s)\rho({\LARGE \pi}%
_{-D},s)|D|_{p}^{-1/2}\gamma(f)Z_{\varphi}^{\ast}(1-s,{\LARGE \pi}_{1})
\]%
\[
=\rho({\LARGE \pi}_{1},s)\rho({\LARGE \pi}_{-D},s)|D|_{p}^{-1/2}\gamma
(f)|\eta|_{p}^{s}%
{\displaystyle\int\limits_{\mathbb{Q}_{p}^{2}\smallsetminus\left\{  0\right\}
}}
{|\eta x_{1}^{2}-x_{2}^{2}|_{p}^{-s}\varphi\left(  x\right)  dx}_{{1}}%
{dx}_{{2}}.
\]

The announced functional equations follow from the following calculations. (i)
Take $\eta=\epsilon$, then $|\epsilon|_{p}^{-1/2}=|\epsilon|_{p}^{s}%
=1,\quad\gamma(f)=\gamma(1)\gamma(-\epsilon)=1$, see Lemma \ref{lemma0}, and
${\LARGE \pi}_{-D}\left(  \epsilon\right)  =\left(  \epsilon,\epsilon\right)
_{p}=1$, see (\ref{HS_3}). Furthermore ${\rho(}{\LARGE \pi}{_{1},}${$s$}%
${){=}\frac{1-p^{s-1}}{1-p^{-s}}}$, ${{\rho(}{\LARGE \pi}{_{\epsilon},s)=}%
}\frac{1+p^{s-1}}{1+p^{-s}}$, see Lemma \ref{funro}.

(ii) Take $\eta=p$, $p\epsilon$, in this case we have $|\eta|_{p}%
^{-1/2}=p^{1/2}$, $\gamma(f)=\gamma(-\eta)=\frac{1}{\pm\sigma_{p}}$ (see Lemma
\ref{lemma0}) and ${\rho(}{\LARGE \pi}{_{\eta},s)=\pm\sigma_{p}p^{s-\frac
{1}{2}}}$ (see Lemma \ref{funro}). Then
\begin{align*}
\rho({\LARGE \pi}_{1},s)\rho({\LARGE \pi}_{-D},s)|D|_{p}^{-1/2}\gamma
(f)|\eta|_{p}^{s} &  ={\frac{1-p^{s-1}}{1-p^{-s}}}(\pm\sigma_{p})p^{s-\frac
{1}{2}}p^{1/2}(\frac{1}{\pm\sigma_{p}})p^{-s}\\
&  =\frac{1-p^{s-1}}{1-p^{-s}}.
\end{align*}

\end{proof}

\begin{proposition}
\label{eqfun4} Take $f(x)=x_{1}^{2}-ax_{2}^{2}-px_{3}^{2}+apx_{4}^{2}$, with
$a\in\mathbb{Z}$ a quadratic non-residue module $p$. Then
\[
\int\limits_{\mathbb{Q}_{p}^{4}\setminus\{0\}}|f(x)|^{s-2}\widehat{\varphi
}(x)d^{4}x=\frac{1-p^{s-2}}{\left(  1-p^{-s}\right)  }\int\limits_{\mathbb{Q}%
_{p}^{4}\setminus\{0\}}|apx_{1}^{2}-px_{2}^{2}-ax_{3}^{2}+x_{4}^{2}|_{p}%
^{-s}\varphi(x)d^{4}x.
\]

\end{proposition}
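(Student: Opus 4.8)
The plan is to apply the Rallis--Schiffmann functional equation in the form of Corollary \ref{Cor_Fun_Eq}, just as in the proof of Proposition \ref{eqfun2}, and then to evaluate all the constants explicitly. First I would record the invariants of $f(x)=x_{1}^{2}-ax_{2}^{2}-px_{3}^{2}+apx_{4}^{2}$: its discriminant is $D=1\cdot(-a)\cdot(-p)\cdot(ap)=a^{2}p^{2}$, which is a square, so $D^{\ast}=(-1)^{n/2}D=D$ is a square as well; hence $\pi_{D^{\ast}}=\pi_{1}$ and Corollary \ref{Cor_Fun_Eq} applies with $n=4$. Since $a\in\mathbb{Z}_{p}^{\times}$ we get $|D|_{p}=p^{-2}$, so $|D|_{p}^{-1/2}=p$. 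Also $f$ has the shape of Lemma \ref{lemma1}(ii) with the roles of $a,b$ there played by $a$ and $p$, so $\gamma(f)=(a,p)_{p}=-1$ by (\ref{HS_3}), because $a$ is a quadratic non-residue modulo $p$; in particular $f$ is the norm form of a quaternion division algebra, hence anisotropic, so $f^{-1}(0)=\{0\}$ and the left-hand side of the statement is exactly $Z_{\widehat{\varphi}}(s,\pi_{1},f)$ in the notation of (\ref{zeatfunction}).

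Next I would compute the dual form: substituting gives $f^{\ast}(x)=f\!\left(x_{1},-\tfrac{x_{2}}{a},-\tfrac{x_{3}}{p},\tfrac{x_{4}}{ap}\right)=x_{1}^{2}-\tfrac{x_{2}^{2}}{a}-\tfrac{x_{3}^{2}}{p}+\tfrac{x_{4}^{2}}{ap}$, so $(ap)\,f^{\ast}(x)=apx_{1}^{2}-px_{2}^{2}-ax_{3}^{2}+x_{4}^{2}$ and, since $|ap|_{p}=p^{-1}$, $|f^{\ast}(x)|_{p}=p\,|apx_{1}^{2}-px_{2}^{2}-ax_{3}^{2}+x_{4}^{2}|_{p}$. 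Hence $Z_{\varphi}^{\ast}(-s+2)=p^{-s}\int_{\mathbb{Q}_{p}^{4}\setminus\{0\}}|apx_{1}^{2}-px_{2}^{2}-ax_{3}^{2}+x_{4}^{2}|_{p}^{-s}\varphi(x)\,d^{4}x$.

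Finally I would assemble everything. Corollary \ref{Cor_Fun_Eq} with $n=4$ gives $Z_{\widehat{\varphi}}(s)=\rho(\pi_{1},s-1)\,\rho(\pi_{1},s)\,|D|_{p}^{-1/2}\,\gamma(f)\,Z_{\varphi}^{\ast}(-s+2)$. Substituting $\rho(\pi_{1},s)=\tfrac{1-p^{s-1}}{1-p^{-s}}$ and $\rho(\pi_{1},s-1)=\tfrac{1-p^{s-2}}{1-p^{1-s}}$ from Lemma \ref{funro}(i), together with $|D|_{p}^{-1/2}=p$, $\gamma(f)=-1$, and the above expression for $Z_{\varphi}^{\ast}(-s+2)$, the scalar multiplying the integral becomes $\tfrac{1-p^{s-2}}{1-p^{1-s}}\cdot\tfrac{1-p^{s-1}}{1-p^{-s}}\cdot p\cdot(-1)\cdot p^{-s}$. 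The only genuine computation is to simplify this rational function of $p^{-s}$: grouping $p\cdot p^{-s}=p^{1-s}$ with the second $\rho$-factor and using the identity $(1-p^{s-1})\,p^{1-s}=-(1-p^{1-s})$ yields $\tfrac{(1-p^{s-1})p^{1-s}}{1-p^{1-s}}=-1$, so the scalar collapses to $\tfrac{1-p^{s-2}}{1-p^{-s}}$, the two minus signs (from $\gamma(f)$ and from the identity) cancelling. This is the asserted functional equation, valid first for $\operatorname{Re}(s)$ in the common domain of convergence and then for all $s$ by meromorphic continuation. I do not expect a real obstacle here: the shape of the identity is forced by Theorem \ref{funcional}, and the only delicate point is the bookkeeping of the four constants $|D|_{p}^{-1/2}$, $\gamma(f)$, the change-of-variables factor $p^{-s}$, and the shifted factor $\rho(\pi_{1},s-1)$, and making sure the signs match.
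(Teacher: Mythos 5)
Your proposal is correct and follows essentially the same route as the paper: it invokes the Rallis--Schiffmann functional equation (Theorem \ref{funcional}, in the form of Corollary \ref{Cor_Fun_Eq}) with $n=4$, $D=a^{2}p^{2}$, $|D|_{p}^{-1/2}=p$, $\gamma(f)=(a,p)_{p}=-1$, extracts $|ap|_{p}^{s}=p^{-s}$ from the dual form, and simplifies the product of $\rho$-factors exactly as the paper does. The only differences are cosmetic (the explicit verification that $(ap)f^{\ast}$ equals the displayed dual form and the remark on anisotropy), so no gap to report.
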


\begin{proof}
In this case $n=4$, $D=p^{2}a^{2}$, $D^{\ast}=D$ and $\gamma(f)=(a,p)_{p}=-1$,
see Lemma \ref{lemma1} (ii) and (\ref{HS_3}), the functional equation takes
the form
\begin{align*}
&  \int\limits_{\mathbb{Q}_{p}^{4}\setminus\{0\}}|f(x)|_{p}^{s-2}%
\widehat{\varphi}(x)d^{4}x\\
&  =-\rho({\LARGE \pi}_{1},s-1)\rho({\LARGE \pi}_{1},s)p\int
\limits_{\mathbb{Q}_{p}^{4}\setminus\{0\}}|x_{1}^{2}-a^{-1}x_{2}^{2}%
-p^{-1}x_{3}^{2}+(ap)^{-1}x_{4}^{2}|_{p}^{-s}\varphi(x)d^{4}x\\
&  =-|ap|_{p}^{s}p\frac{1-p^{s-2}}{1-p^{1-s}}\frac{1-p^{s-1}}{1-p^{-s}}%
\int\limits_{\mathbb{Q}_{p}^{4}\setminus\{0\}}|apx_{1}^{2}-px_{2}^{2}%
-ax_{3}^{2}+x_{4}^{2}|_{p}^{-s}\varphi(x)d^{4}x\\
&  =\frac{1-p^{s-2}}{\left(  1-p^{-s}\right)  }\int\limits_{\mathbb{Q}_{p}%
^{4}\setminus\{0\}}|apx_{1}^{2}-px_{2}^{2}-ax_{3}^{2}+x_{4}^{2}|_{p}%
^{-s}\varphi(x)d^{4}x.
\end{align*}

\end{proof}

\section{\label{Section2}Riesz Kernels and Lizorkin Spaces of Second Kind}

In this section we introduce a new type of Riesz kernels depending on a
complex parameter and a certain quadratic form. The main result of this
section establishes that these kernels considered as distributions on a
Lizorkin spaces of second kind form an Abelian group under convolution.

\subsection{Riesz Kernels}

In this section $f(x):=x_{1}^{2}-ax_{2}^{2}-px_{3}^{2}+apx_{4}^{2}$ with
$a\in\mathbb{Z}$ a quadratic non-residue module $p$. Note that $a\in
\mathbb{Z}^{\times}_{p}$ and that $f(x)$ is an elliptic quadratic form, i.e.
$f(x)=0\Leftrightarrow x=0$. We call the function
\[
K_{\alpha}(x):=\dfrac{1-p^{-\alpha}}{1-p^{\alpha-2}}|f(x)|_{p}^{\alpha
-2},\quad\operatorname{Re}(\alpha)>0,\ \alpha\neq2+\frac{2\pi\sqrt{-1}}{\ln
p}\mathbb{Z},
\]
\textit{the Riesz kernel attached to} $f(x)$.

\begin{lemma}
\label{lemma2}Set $\varphi$ to be the characteristic function of the ball
$\widetilde{x}_{0}+\left(  p^{m}\mathbb{Z}_{p}\right)  ^{4}$. Then%

\[
Z_{\varphi}\left(  \alpha,f\right)  =\left\{
\begin{array}
[c]{lll}%
\frac{p^{-2\alpha m}\left(  1-p^{-2}\right)  }{1-p^{-\alpha}} & \text{if} &
\widetilde{x}_{0}\in\left(  p^{m}\mathbb{Z}_{p}\right)  ^{4}\\
&  & \\%
\begin{array}
[c]{c}%
\text{holomorphic function}\\
\text{in }\alpha\text{, for }\alpha\in\mathbb{C}%
\end{array}
& \text{if} & \widetilde{x}_{0}\notin\left(  p^{m}\mathbb{Z}_{p}\right)  ^{4}.
\end{array}
\right.
\]

\end{lemma}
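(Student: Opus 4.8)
The plan is to compute the integral $Z_{\varphi}(\alpha,f)=\int_{\mathbb{Q}_p^4\smallsetminus\{0\}}|f(x)|_p^{\alpha-2}\varphi(x)\,d^4x$ directly, exploiting the fact that $f$ is an \emph{elliptic} form of dimension $4$, so $|f(x)|_p$ is comparable to $\|x\|_p^2$. First I would record the key homogeneity properties: $f(p^k y)=p^{2k}f(y)$, hence $|f(p^k y)|_p=p^{-2k}|f(y)|_p$, and — crucially — since $f(x)=0$ only at $x=0$, there is a constant so that $|f(x)|_p=p^{-2\,\mathrm{ord}(x)}\cdot u(x)$ with $u(x)$ a unit-valued function; in fact for $p\neq 2$ and $f$ anisotropic over $\mathbb{Z}_p$ one has $|f(x)|_p=\|x\|_p^2$ whenever $x$ has a coordinate that is a unit, because $f$ reduces mod $p$ to a nondegenerate form and cannot vanish nontrivially (ellipticity is exactly this statement at the level of the residue field for this particular $f$). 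I would isolate this as the geometric input and then the computation splits into spherical shells $\|x\|_p=p^{-j}$.

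\textbf{Case $\widetilde{x}_0\in(p^m\mathbb{Z}_p)^4$.} Here $\varphi$ is the characteristic function of $(p^m\mathbb{Z}_p)^4$ itself. Changing variables $x=p^m y$ gives $Z_{\varphi}(\alpha,f)=p^{-4m}p^{-2m(\alpha-2)}\int_{\mathbb{Z}_p^4\smallsetminus\{0\}}|f(y)|_p^{\alpha-2}\,d^4y=p^{-2\alpha m}\int_{\mathbb{Z}_p^4\smallsetminus\{0\}}|f(y)|_p^{\alpha-2}\,d^4y$. It remains to evaluate $I(\alpha):=\int_{\mathbb{Z}_p^4\smallsetminus\{0\}}|f(y)|_p^{\alpha-2}\,d^4y$. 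Decompose $\mathbb{Z}_p^4\smallsetminus\{0\}=\bigsqcup_{j\geq 0}\{\,\|y\|_p=p^{-j}\,\}$; on the shell $\|y\|_p=p^{-j}$ write $y=p^j z$ with $\|z\|_p=1$, so $|f(y)|_p=p^{-2j}\|z\|_p^2\cdot(\text{unit})=p^{-2j}$ using the geometric input above, and the measure of the shell is $p^{-4j}(1-p^{-4})$. Summing, $I(\alpha)=\sum_{j\geq 0}p^{-4j}(1-p^{-4})\,p^{-2j(\alpha-2)}=(1-p^{-4})\sum_{j\geq 0}p^{-2\alpha j}=\dfrac{1-p^{-4}}{1-p^{-2\alpha}}=\dfrac{(1-p^{-2})(1+p^{-2})}{(1-p^{-\alpha})(1+p^{-\alpha})}$. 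Hmm — this gives $\dfrac{1-p^{-4}}{1-p^{-2\alpha}}$ rather than the stated $\dfrac{1-p^{-2}}{1-p^{-\alpha}}$, so I would double-check the shell value of $|f(y)|_p$: the correct statement for this anisotropic $f$ is that $|f(y)|_p$ takes \emph{both} values $\|y\|_p^2$ and $p^{-1}\|y\|_p^2$ on each shell (the form over $\mathbb{Z}_p$ is a norm form for a ramified quadratic extension, so its value group includes the uniformizer), and a more careful bookkeeping of the two sub-shells $\mathrm{ord}(f(y))=2j$ and $\mathrm{ord}(f(y))=2j+1$ produces the factor $1-p^{-2}$ over $1-p^{-\alpha}$. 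So the real content of this case is the \emph{local computation of the Igusa zeta function of $f$ over $\mathbb{Z}_p^4$}, which is standard for an elliptic quadratic form; I would carry it out via the Newton-polygon/reduction-mod-$p$ argument, treating $f$ as $x_1^2-ax_2^2-p(x_3^2-ax_4^2)$ and splitting on $\mathrm{ord}(x_3^2-ax_4^2)$ versus $\mathrm{ord}(x_1^2-ax_2^2)$ relative to each other — noting $x_1^2-ax_2^2$ is itself anisotropic mod $p$ so its norm equals $\max(|x_1|_p,|x_2|_p)^2$.

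\textbf{Case $\widetilde{x}_0\notin(p^m\mathbb{Z}_p)^4$.} Then $0\notin\widetilde{x}_0+(p^m\mathbb{Z}_p)^4$, and by ellipticity $|f(x)|_p$ is bounded away from $0$ on this ball: indeed $\|x\|_p$ is constant equal to $\|\widetilde{x}_0\|_p=p^{-j_0}<1$... actually more carefully, on the ball the norm $\|x\|_p$ equals $\|\widetilde{x}_0\|_p$ provided $\|\widetilde{x}_0\|_p>p^{-m}$ is false — one should just note that the ball is a compact set not containing $0$, so $\inf\|x\|_p>0$ there, hence $|f(x)|_p=\|x\|_p^2\cdot(\text{unit})$ is bounded and bounded below, and thus $x\mapsto|f(x)|_p^{\alpha-2}$ is a bounded continuous function on a compact set for every $\alpha\in\mathbb{C}$; integrating a function that is locally constant in $x$ and entire in the parameter $\alpha$ (the integrand on the ball is a finite sum of terms $p^{-2k(\alpha-2)}\cdot(\text{measure})$, $k$ ranging over the finitely many values of $\mathrm{ord}(f(x))$ attained) gives an entire function of $\alpha$. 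I would make this precise by partitioning the ball into finitely many sub-balls on which $\mathrm{ord}(f(x))$ is constant — possible because $|f|_p$ is locally constant away from its zero locus $\{0\}$ and the ball is compact and disjoint from $\{0\}$ — so that $Z_\varphi(\alpha,f)$ is a finite $\mathbb{C}$-linear combination of exponentials $p^{-2k\alpha}$, manifestly holomorphic on all of $\mathbb{C}$.

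\textbf{Main obstacle.} The only nontrivial point is the first case: getting the exact constant $\dfrac{(1-p^{-2})p^{-2\alpha m}}{1-p^{-\alpha}}$ rather than $\dfrac{(1-p^{-4})p^{-2\alpha m}}{1-p^{-2\alpha}}$, which forces one to use the \emph{fine} structure of the value set of the anisotropic form $f$ on each sphere (both $2j$ and $2j+1$ occur as $\mathrm{ord}(f)$) rather than the naive bound $|f(x)|_p=\|x\|_p^2$. Everything else is bookkeeping with $p$-adic shells and geometric series.
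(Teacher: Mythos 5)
Your plan is essentially correct, but only after your own mid-course correction, and at the one nontrivial step it takes a genuinely different route from the paper. Two cautions about that correction: your opening claim that $f$ reduces mod $p$ to a nondegenerate form is false for this $f$ (the reduction is $x_1^2-ax_2^2$, of rank $2$, vanishing e.g.\ at $(0,0,1,0)$, where $f=-p$), and describing $f$ as the norm form of a ramified quadratic extension is inaccurate (it is the reduced norm form of the quaternion division algebra attached to $(a,p)$); but the operative statement you land on is right: writing $f=(x_1^2-ax_2^2)-p(x_3^2-ax_4^2)$ with both binary pieces anisotropic mod $p$, one has $|x_1^2-ax_2^2|_p=\max(|x_1|_p,|x_2|_p)^2$ and $|x_3^2-ax_4^2|_p=\max(|x_3|_p,|x_4|_p)^2$, and since the two orders have opposite parities, $|f(x)|_p=\max\bigl(\max(|x_1|_p,|x_2|_p)^2,\,p^{-1}\max(|x_3|_p,|x_4|_p)^2\bigr)$, so on the sphere $\Vert x\Vert_p=p^{-j}$ only the values $p^{-2j}$ and $p^{-2j-1}$ occur. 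The bookkeeping you leave as ``standard'' does close: on $\Vert x\Vert_p=1$ the sub-shell $|f|_p=1$ has measure $1-p^{-2}$ and the sub-shell $|f|_p=p^{-1}$ has measure $p^{-2}(1-p^{-2})$, giving $\int_{\Vert x\Vert_p=1}|f|_p^{\alpha-2}d^4x=(1-p^{-2})(1+p^{-\alpha})$, and the geometric series over shells then yields $\frac{(1-p^{-2})(1+p^{-\alpha})}{1-p^{-2\alpha}}=\frac{1-p^{-2}}{1-p^{-\alpha}}$, which combined with your (correct) scaling factor $p^{-2\alpha m}$ is exactly the stated formula. The paper reduces to the unit sphere in the same way (its recursion $Z(\alpha)=p^{-2\alpha}Z(\alpha)+\int_U$ is your shell decomposition in disguise) but evaluates $\int_U|f|_p^{\alpha-2}d^4x$ by brute force, partitioning $U$ into the $2^4-1$ pieces determined by which coordinates are units and tabulating each contribution; your max-formula argument is shorter and more conceptual (and would generalize to any $g\perp ph$ with $g,h$ unimodular anisotropic), at the price of invoking the residue-field anisotropy of the binary pieces, which the paper's table avoids. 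Your treatment of the case $\widetilde{x}_0\notin(p^m\mathbb{Z}_p)^4$ --- the ball misses $0$, so $\mathrm{ord}(f)$ takes finitely many values on it and $Z_\varphi(\alpha,f)$ is a finite sum of terms $c_kp^{-k(\alpha-2)}$, entire in $\alpha$ --- is the same as the paper's, just spelled out in more detail.
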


\begin{proof}
We consider first the case $\widetilde{x}_{0}\in\left(  p^{m}\mathbb{Z}%
_{p}\right)  ^{4}$. Set
\[
Z(\alpha):=%
{\displaystyle\int\limits_{\mathbb{Z}_{p}^{4}\smallsetminus\{0\}}}
\left\vert f(x)\right\vert _{p}^{\alpha-2}d^{4}x\text{ for }\operatorname{Re}%
(\alpha)>2.
\]
By a change of variables $Z_{\varphi}\left(  \alpha,f\right)  =p^{-2\alpha
m}Z(\alpha)$. The result follows from the following formula:
\begin{equation}
Z(\alpha)=\frac{1-p^{-2}}{1-p^{-\alpha}}\text{ for }\operatorname{Re}%
(\alpha)>2. \label{formula}%
\end{equation}

Set $\mathbb{Z}_{p}^{4}=\left(  p\mathbb{Z}_{p}\right)  ^{4}%
{\textstyle\bigsqcup}
U$ with $U=\left\{  x\in\mathbb{Z}_{p}^{4}:\left\Vert x\right\Vert
_{p}=1\right\}  $. Then%
\begin{align*}
Z(\alpha)  &  =%
{\displaystyle\int\limits_{\left(  p\mathbb{Z}_{p}\right)  ^{4}}}
\left\vert f(x)\right\vert _{p}^{\alpha-2}d^{4}x+%
{\displaystyle\int\limits_{U}}
\left\vert f(x)\right\vert _{p}^{\alpha-2}d^{4}x\\
&  =p^{-2\alpha}Z(\alpha)+%
{\displaystyle\int\limits_{U}}
\left\vert f(x)\right\vert _{p}^{\alpha-2}d^{4}x,
\end{align*}
i.e.
\[
Z(\alpha)=\frac{1}{1-p^{-2\alpha}}%
{\displaystyle\int\limits_{U}}
\left\vert f(x)\right\vert _{p}^{\alpha-2}d^{4}x\text{.}%
\]
In order to show (\ref{formula}), it is sufficient to prove the following
formula:
\begin{equation}%
{\displaystyle\int\limits_{U}}
\left\vert f(x)\right\vert _{p}^{\alpha-2}d^{4}x=(1-p^{-2})\left(
1+p^{-\alpha}\right)  \text{ for }\alpha\in\mathbb{C}. \label{Int_unidades}%
\end{equation}
This formula can be established as follows. For $i=\left(  i_{1},i_{2}%
,i_{3},i_{4}\right)  \in\left\{  0,1\right\}  ^{4}\smallsetminus\left\{
\left(  1,1,1,1\right)  \right\}  $ we define
\begin{align*}
U^{\left(  i\right)  }  &  =U_{1}^{\left(  i\right)  }\times U_{2}^{\left(
i\right)  }\times U_{3}^{\left(  i\right)  }\times U_{4}^{\left(  i\right)
},\\
U_{j}^{\left(  i\right)  }  &  :=\left\{
\begin{array}
[c]{ccc}%
p\mathbb{Z}_{p} & \text{if} & i_{j}=1\\
&  & \\
\mathbb{Z}_{p}^{\times} & \text{if} & i_{j}=0.
\end{array}
\right.
\end{align*}
Then $U=%
{\textstyle\bigsqcup\nolimits_{i}}
U^{\left(  i\right)  }$ and
\[%
{\displaystyle\int\limits_{U}}
\left\vert f(x)\right\vert _{p}^{\alpha-2}d^{4}x=%
{\displaystyle\sum\limits_{i}}
{\displaystyle\int\limits_{U^{\left(  i\right)  }}}
\left\vert f(x)\right\vert _{p}^{\alpha-2}d^{4}x:=%
{\displaystyle\sum\limits_{i}}
Z_{i}(\alpha).
\]

By a direct calculation one finds:%
\[%
\begin{tabular}
[c]{|l|l|}\hline
Index $i$ & $Z_{i}(\alpha)$\\\hline
$%
\begin{array}
[c]{c}%
\left(  1,1,1,0\right)
\end{array}
$ & $p^{-\alpha-1}\left(  1-p^{-1}\right)  $\\\hline
$%
\begin{array}
[c]{c}%
\left(  1,1,0,1\right)
\end{array}
$ & $p^{-\alpha-1}\left(  1-p^{-1}\right)  $\\\hline
$%
\begin{array}
[c]{c}%
\left(  1,1,0,0\right)
\end{array}
$ & $p^{-\alpha}\left(  1-p^{-1}\right)  ^{2}$\\\hline
$%
\begin{array}
[c]{cc}%
\left(  1,0,1,1\right)  , & \left(  0,1,1,1\right)
\end{array}
$ & $\left(  1-p^{-1}\right)  p^{-3}$\\\hline
$%
\begin{array}
[c]{ccc}%
\left(  1,0,1,0\right)  , & \left(  1,0,0,1\right)  , & \left(  0,1,1,0\right)
\\
\left(  0,1,0,1\right)  , & \left(  0,0,1,1\right)  &
\end{array}
$ & $\left(  1-p^{-1}\right)  ^{2}p^{-2}$\\\hline
$%
\begin{array}
[c]{ccc}%
\left(  1,0,0,0\right)  , & \left(  0,1,0,0\right)  , & \left(  0,0,1,0\right)
\\
\left(  0,0,0,1\right)  &  &
\end{array}
$ & $\left(  1-p^{-1}\right)  ^{3}p^{-1}$\\\hline
$%
\begin{array}
[c]{c}%
\left(  0,0,0,0\right)
\end{array}
$ & $\left(  1-p^{-1}\right)  ^{4}$\\\hline
\end{tabular}
\ \ \ \ \
\]

In the case $\widetilde{x}_{0}\notin\left(  p^{m}\mathbb{Z}_{p}\right)  ^{4}$,
$f$\ does not vanish on the ball $\widetilde{x}_{0}+\left(  p^{m}%
\mathbb{Z}_{p}\right)  ^{4}$ which implies that $Z_{\varphi}\left(
\alpha\right)  $ is a holomorphic function on the whole complex plane.
\end{proof}

\begin{lemma}
\label{lemma3}$K_{\alpha}(x)$ possesses, as a distribution on $S(\mathbb{Q}%
_{p}^{4})$, a meromorphic continuation to all $\alpha\neq2+\frac{2\pi\sqrt
{-1}}{\ln p}\mathbb{Z}$ given by
\begin{multline*}
\left\langle K_{\alpha},\varphi\right\rangle =\varphi(0)\dfrac{1-p^{-2}%
}{1-p^{\alpha-2}}+\\
\dfrac{1-p^{-\alpha}}{1-p^{\alpha-2}}\left[  \int_{||x||_{p}>1}\varphi
(x)|f(x)|_{p}^{\alpha-2}d^{4}x+\int_{||x||_{p}\leq1}\left(  \varphi
(x)-\varphi\left(  0\right)  \right)  |f(x)|_{p}^{\alpha-2}d^{4}x\right]  .
\end{multline*}

\end{lemma}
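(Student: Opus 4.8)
The plan is to derive the stated meromorphic continuation by the standard device of splitting the defining integral over the region where the test function may be replaced by its value at $0$ and the region where it cannot, subtracting off the singular contribution of $\varphi(0)$, and evaluating the resulting constant via Lemma~\ref{lemma2}. Recall that for $\operatorname{Re}(\alpha)>2$ the distribution $K_\alpha$ is given by the absolutely convergent integral $\langle K_\alpha,\varphi\rangle = \tfrac{1-p^{-\alpha}}{1-p^{\alpha-2}}\int_{\mathbb{Q}_p^4\setminus f^{-1}(0)}\varphi(x)|f(x)|_p^{\alpha-2}\,d^4x$, and that by ellipticity $f^{-1}(0)=\{0\}$. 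First I would split $\mathbb{Q}_p^4\setminus\{0\}$ into $\{\|x\|_p>1\}$ and $\{0<\|x\|_p\le 1\}$, writing
\[
\int_{\mathbb{Q}_p^4\setminus\{0\}}\varphi(x)|f(x)|_p^{\alpha-2}d^4x
=\int_{\|x\|_p>1}\varphi(x)|f(x)|_p^{\alpha-2}d^4x
+\int_{\|x\|_p\le 1}\varphi(x)|f(x)|_p^{\alpha-2}d^4x,
\]
where on the bounded region the point $x=0$ is understood to be excluded. In the last integral I add and subtract $\varphi(0)$:
\[
\int_{\|x\|_p\le 1}\varphi(x)|f(x)|_p^{\alpha-2}d^4x
=\int_{\|x\|_p\le 1}\bigl(\varphi(x)-\varphi(0)\bigr)|f(x)|_p^{\alpha-2}d^4x
+\varphi(0)\int_{\mathbb{Z}_p^4\setminus\{0\}}|f(x)|_p^{\alpha-2}d^4x.
\]

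Next I would invoke Lemma~\ref{lemma2} (equivalently, formula~(\ref{formula})) with $m=0$ and $\widetilde{x}_0=0$, which gives $\int_{\mathbb{Z}_p^4\setminus\{0\}}|f(x)|_p^{\alpha-2}d^4x=\frac{1-p^{-2}}{1-p^{-\alpha}}$ for $\operatorname{Re}(\alpha)>2$. Multiplying by the prefactor $\frac{1-p^{-\alpha}}{1-p^{\alpha-2}}$ cancels the factor $1-p^{-\alpha}$ and produces exactly the term $\varphi(0)\,\frac{1-p^{-2}}{1-p^{\alpha-2}}$. This is the step where the specific value of the unit-ball integral is essential; everything else is bookkeeping. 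Combining the three pieces gives the displayed formula for $\operatorname{Re}(\alpha)>2$.

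It then remains to argue that the right-hand side is meromorphic on $\mathbb{C}\setminus\bigl(2+\frac{2\pi\sqrt{-1}}{\ln p}\mathbb{Z}\bigr)$, so that it furnishes the asserted analytic continuation. The first bracketed integral, over $\|x\|_p>1$, is entire in $\alpha$: since $\varphi$ has compact support and (by ellipticity together with $a,p$ being units times powers of $p$) $|f(x)|_p$ is comparable to $\|x\|_p^2$ on $\|x\|_p>1$, the integrand is a finite sum of terms $p^{2j(\alpha-2)}$ times bounded quantities, and the sum is actually finite. The second integral, over $\|x\|_p\le 1$, converges for $\operatorname{Re}(\alpha)>0$: using local constancy of $\varphi$, say with exponent $\ell$, the integrand vanishes on $\|x\|_p\le p^{\ell}$, and on each shell $\|x\|_p=p^{-j}$ with $j<-\ell$ one estimates $|f(x)|_p^{\alpha-2}\le C p^{-2j(\operatorname{Re}(\alpha)-2)}$ against a shell of volume $\asymp p^{-4j}$, giving a geometric series convergent for $\operatorname{Re}(\alpha)>0$. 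The factor $\frac{1-p^{-\alpha}}{1-p^{\alpha-2}}$ in front is meromorphic with poles precisely where $p^{\alpha-2}=1$, i.e. at $\alpha\in 2+\frac{2\pi\sqrt{-1}}{\ln p}\mathbb{Z}$, which are the excluded points; and the isolated term $\varphi(0)\frac{1-p^{-2}}{1-p^{\alpha-2}}$ has the same poles. Hence the whole expression is meromorphic away from those points, and by uniqueness of analytic continuation it agrees with $\langle K_\alpha,\varphi\rangle$ wherever the latter was originally defined. The only mildly delicate point — and the one I would write out carefully — is the convergence of the subtracted integral $\int_{\|x\|_p\le 1}(\varphi(x)-\varphi(0))|f(x)|_p^{\alpha-2}d^4x$ down to $\operatorname{Re}(\alpha)>0$; but this is immediate from local constancy, which makes the integrand identically zero near $0$, so in fact it too reduces to a finite sum over shells between $p^{-1}$ and $p^{\ell}$ and is entire.
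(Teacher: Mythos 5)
Your proposal is correct and follows essentially the same route as the paper: split the integral at $\|x\|_p=1$, subtract $\varphi(0)$ on the unit ball, and use Lemma~\ref{lemma2} (with $m=0$, $\widetilde{x}_0=0$) to evaluate $\int_{\mathbb{Z}_p^4\setminus\{0\}}|f(x)|_p^{\alpha-2}d^4x=\frac{1-p^{-2}}{1-p^{-\alpha}}$, which after multiplying by the prefactor yields the term $\varphi(0)\frac{1-p^{-2}}{1-p^{\alpha-2}}$. Your extra remarks on convergence and meromorphy of the remaining pieces are accurate and only make explicit what the paper leaves implicit.
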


\begin{proof}
The result follows from Lemma \ref{lemma2}\ \ by%
\begin{multline*}
\left\langle K_{\alpha},\varphi\right\rangle =\dfrac{1-p^{-\alpha}%
}{1-p^{\alpha-2}}\int\limits_{\mathbb{Q}_{p}^{4}\setminus\{0\}}\varphi
(x)|f\left(  x\right)  |_{p}^{\alpha-2}d^{4}x=\\
\dfrac{1-p^{-\alpha}}{1-p^{\alpha-2}}\left[  \int_{||x||_{p}>1}\varphi
(x)|f\left(  x\right)  |_{p}^{\alpha-2}d^{4}x+\int_{||x||_{p}\leq1}\left(
\varphi(x)-\varphi\left(  0\right)  \right)  |f(x)|_{p}^{\alpha-2}%
d^{4}x\right] \\
+\varphi(0)\dfrac{1-p^{-2}}{1-p^{\alpha-2}}.
\end{multline*}

\end{proof}

From Lemma \ref{lemma3} follows that the distribution $K_{\alpha}$ has simple
poles at the points $\alpha=2+\alpha_{k}$ with $\alpha_{k}:=\frac{2k\pi
\sqrt{-1}}{\ln p},\quad k\in\mathbb{Z}$, and
\begin{equation}
{\lim_{\alpha\rightarrow\alpha_{k}}K_{\alpha}:=K_{\alpha_{k}}=\delta,}
\label{deltacero}%
\end{equation}
where ${\delta}$ denotes the Dirac distribution.

\begin{lemma}
\label{lemma4}%
\[
\int_{||x||_{p}>1}\frac{1}{|f\left(  x\right)  |_{p}^{\alpha+2}}%
dx=\frac{p^{-2\alpha}(1-p^{-2})\left(  1+p^{\alpha}\right)  }{1-p^{-2\alpha}%
},\quad\text{Re}(\alpha)>0.
\]

\end{lemma}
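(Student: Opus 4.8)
The plan is to evaluate the integral $\int_{\|x\|_p>1}|f(x)|_p^{-\alpha-2}\,d^4x$ by decomposing the domain into ``shells'' according to the value of $\|x\|_p$, using the homogeneity of the quadratic form $f$ and the ellipticity condition $f(x)=0\Leftrightarrow x=0$. The key structural fact is that on each shell $\|x\|_p=p^{k}$ with $k\geq 1$, writing $x=p^{-k}y$ with $y\in\mathbb{Z}_p^4$, $\|y\|_p=1$, we have $f(x)=p^{-2k}f(y)$, hence $|f(x)|_p=p^{2k}|f(y)|_p$, and $d^4x=p^{4k}\,d^4y$. Therefore
\[
\int_{\|x\|_p>1}\frac{d^4x}{|f(x)|_p^{\alpha+2}}=\sum_{k=1}^{\infty}p^{4k}\cdot p^{-2k(\alpha+2)}\int_{U}|f(y)|_p^{-\alpha-2}\,d^4y=\left(\sum_{k=1}^{\infty}p^{-2\alpha k}\right)\int_{U}|f(y)|_p^{-\alpha-2}\,d^4y,
\]
where $U=\{y\in\mathbb{Z}_p^4:\|y\|_p=1\}$ as in Lemma~\ref{lemma2}. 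The geometric series converges to $\frac{p^{-2\alpha}}{1-p^{-2\alpha}}$ precisely when $\operatorname{Re}(\alpha)>0$, which matches the hypothesis.

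Next I would evaluate $\int_{U}|f(y)|_p^{-\alpha-2}\,d^4y$. This is exactly the integral (\ref{Int_unidades}) appearing in the proof of Lemma~\ref{lemma2}, but with $\alpha$ replaced by $-\alpha$: there we computed $\int_{U}|f(x)|_p^{\beta-2}\,d^4x=(1-p^{-2})(1+p^{-\beta})$ for all $\beta\in\mathbb{C}$, using the partition $U=\bigsqcup_i U^{(i)}$ over $i\in\{0,1\}^4\smallsetminus\{(1,1,1,1)\}$ and the table of values of $Z_i(\beta)$. Setting $\beta=-\alpha$ gives $\int_{U}|f(y)|_p^{-\alpha-2}\,d^4y=(1-p^{-2})(1+p^{\alpha})$. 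Substituting this into the previous display yields
\[
\int_{\|x\|_p>1}\frac{d^4x}{|f(x)|_p^{\alpha+2}}=\frac{p^{-2\alpha}}{1-p^{-2\alpha}}\cdot(1-p^{-2})(1+p^{\alpha})=\frac{p^{-2\alpha}(1-p^{-2})(1+p^{\alpha})}{1-p^{-2\alpha}},
\]
which is the asserted formula.

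The only subtle point is whether one is entitled to quote formula (\ref{Int_unidades}) for the exponent $\beta=-\alpha$ when $\operatorname{Re}(\alpha)>0$ (so $\operatorname{Re}(\beta)<0$): since $f$ is elliptic, $|f(y)|_p$ is bounded away from $0$ on $U$ (indeed $|f(y)|_p=1$ there, because $f$ has coefficients in $\mathbb{Z}_p^\times$ reducing to an anisotropic form mod $p$, so $f(y)\in\mathbb{Z}_p^\times$ whenever $\|y\|_p=1$), hence the integrand is locally constant and bounded on $U$ and the integral is entire in $\beta$, exactly as stated in Lemma~\ref{lemma2}. In fact this observation collapses the table: $|f(y)|_p^{-\alpha-2}=1$ on $U$, so $\int_U|f(y)|_p^{-\alpha-2}\,d^4y=\operatorname{vol}(U)=1-p^{-4}=(1-p^{-2})(1+p^{-2})$—wait, this would force the constant to be $(1+p^{-2})$ rather than $(1+p^{\alpha})$, so the partition genuinely matters and $f$ is \emph{not} a unit everywhere on $U$; the correct route is therefore to invoke (\ref{Int_unidades}) with $\beta=-\alpha$ as stated, and the main (minor) obstacle is simply to double-check the analytic-continuation justification there, which is already handled by the final paragraph of the proof of Lemma~\ref{lemma2}. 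I would then assemble the three displays into a two-line proof.
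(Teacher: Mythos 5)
Your proposal is correct and is essentially the paper's own proof: the paper likewise decomposes $\{\|x\|_{p}>1\}=\bigsqcup_{m\geq 1}p^{-m}U$, uses the homogeneity $|f(p^{-m}y)|_{p}=p^{2m}|f(y)|_{p}$ to extract the factor $p^{-2m\alpha}$, sums the geometric series (convergent precisely for $\operatorname{Re}(\alpha)>0$), and then quotes (\ref{Int_unidades}) with exponent $-\alpha$ to obtain $(1-p^{-2})(1+p^{\alpha})$. The only blemish is your retracted aside: $|f|_{p}$ is indeed bounded below on $U$ but takes the two values $1$ and $p^{-1}$ there (which is exactly why the factor $1+p^{\alpha}$ appears), and since (\ref{Int_unidades}) is stated for all complex exponents, invoking it at $-\alpha$ requires no further justification.
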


\begin{proof}
Set $U=(\mathbb{Z}_{p})^{4}\setminus(p\mathbb{Z}_{p})^{4}$ as before. Then
\begin{multline*}
\int_{||x||_{p}>1}\frac{1}{|f\left(  x\right)  |_{p}^{\alpha+2}}d^{4}%
x=\sum_{m=1}^{\infty}\int_{p^{-m}U}\frac{1}{|f\left(  x\right)  |^{\alpha+2}%
}d^{4}x\\
=\sum_{m=1}^{\infty}p^{-2m\alpha}\int_{U}\frac{1}{|f\left(  x\right)
|_{p}^{\alpha+2}}d^{4}x=\frac{p^{-2\alpha}}{1-p^{-2\alpha}}\int_{U}\frac
{1}{|f\left(  x\right)  |_{p}^{\alpha+2}}d^{4}x\\
=\frac{p^{-2\alpha}(1-p^{-2})\left(  1+p^{\alpha}\right)  }{1-p^{-2\alpha}},
\end{multline*}
where we used (\ref{Int_unidades}).
\end{proof}

\begin{proposition}
\label{Prop1}For $\text{Re}(\alpha)>0$ and $\varphi\in\mathbf{S}\left(
\mathbb{Q}_{p}^{4}\right)  $, the following formulas hold:

(i) $\left\langle K_{\alpha},\varphi\right\rangle =\dfrac{1-p^{-\alpha}%
}{1-p^{\alpha-2}}%
{\displaystyle\int\nolimits_{\mathbb{Q}_{p}^{4}\setminus\{0\}}}
|f\left(  x\right)  |_{p}^{\alpha-2}\varphi(x)d^{4}x,\quad\alpha\neq
2+\alpha_{k}$;

(ii) $\left\langle K_{-\alpha},\varphi\right\rangle =\dfrac{1-p^{\alpha}%
}{1-p^{-\alpha-2}}%
{\displaystyle\int\nolimits_{\mathbb{Q}_{p}^{4}}}
\frac{\varphi(x)-\varphi(0)}{|f\left(  x\right)  |_{p}^{\alpha+2}}d^{4}x$;

(iii) $(K_{\alpha}\ast\varphi)(x)=\dfrac{1-p^{-\alpha}}{1-p^{\alpha-2}}%
{\displaystyle\int\nolimits_{\mathbb{Q}_{p}^{4}\setminus\{0\}}}
|f(y)|_{p}^{\alpha-2}\varphi(x+y)d^{4}y,\quad\alpha\neq2+\alpha_{k}$;

(iv) $(K_{-\alpha}\ast\varphi)(x)=\dfrac{1-p^{\alpha}}{1-p^{-\alpha-2}}%
{\displaystyle\int\nolimits_{\mathbb{Q}_{p}^{4}}}
\frac{\varphi(x+y)-\varphi(x)}{|f(y)|_{p}^{\alpha+2}}d^{4}y.$
\end{proposition}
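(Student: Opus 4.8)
The plan is to derive all four formulas from the meromorphic continuation in Lemma \ref{lemma3} together with the elementary computations in Lemmas \ref{lemma2} and \ref{lemma4}. First I would establish (i): for $\operatorname{Re}(\alpha)>2$ this is just the definition of $K_\alpha$ acting on a test function, since the integral $\int_{\mathbb{Q}_p^4\setminus\{0\}}|f(x)|_p^{\alpha-2}\varphi(x)\,d^4x$ converges absolutely there (the only issue is near $x=0$, and $f$ is elliptic so $|f(x)|_p^{\alpha-2}$ is locally integrable for $\operatorname{Re}(\alpha)>2$ by Lemma \ref{lemma2}). To extend (i) to all $\operatorname{Re}(\alpha)>0$ with $\alpha\neq 2+\alpha_k$, I would check that the right-hand side integral still converges absolutely for $\operatorname{Re}(\alpha)>0$: split it at $\|x\|_p=1$; the piece over $\|x\|_p>1$ converges by the geometric-series estimate behind Lemma \ref{lemma4}, and the piece over $\|x\|_p\le 1$ converges because on the unit ball $|f(x)|_p\le 1$ makes $|f(x)|_p^{\alpha-2}$ no worse than a constant times $|f(x)|_p^{\operatorname{Re}(\alpha)-2}$, whose integral over $\mathbb{Z}_p^4$ equals $Z(\operatorname{Re}(\alpha))<\infty$ for $\operatorname{Re}(\alpha)>0$ (again Lemma \ref{lemma2}, after rewriting $Z(\alpha)=\tfrac{1}{1-p^{-2\alpha}}\int_U|f|^{\alpha-2}$ and noting $\int_U|f|^{\alpha-2}$ is entire). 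Since both sides are holomorphic in $\alpha$ on the strip $\operatorname{Re}(\alpha)>0$ (minus the poles) and agree on $\operatorname{Re}(\alpha)>2$, they agree everywhere by analytic continuation.

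Next (ii): starting from the expression in Lemma \ref{lemma3} with $\alpha$ replaced by $-\alpha$, the prefactor becomes $\tfrac{1-p^{\alpha}}{1-p^{-\alpha-2}}$ and the $\varphi(0)$ term has coefficient $\tfrac{1-p^{-2}}{1-p^{-\alpha-2}}$; I would combine this constant term back into the integral by observing that $\int_{\|x\|_p\le 1}|f(x)|_p^{-\alpha-2}d^4x$ diverges but the regularized combination does not, so instead I rewrite
\[
\frac{1-p^{-2}}{1-p^{-\alpha-2}}\varphi(0)
=\frac{1-p^{\alpha}}{1-p^{-\alpha-2}}\,\varphi(0)\!\int_{\|x\|_p>1}\!\frac{d^4x}{|f(x)|_p^{\alpha+2}}\,\cdot\,\frac{1}{\text{(const)}},
\]
using Lemma \ref{lemma4} to identify $\int_{\|x\|_p>1}|f(x)|_p^{-\alpha-2}d^4x$ with an explicit rational function of $p^{\alpha}$; then the $\|x\|_p>1$ integral of $\varphi(x)|f(x)|_p^{-\alpha-2}$ and the $-\varphi(0)$ copy of it merge with the $\|x\|_p\le 1$ term $(\varphi(x)-\varphi(0))|f(x)|_p^{-\alpha-2}$ to give exactly $\int_{\mathbb{Q}_p^4}\frac{\varphi(x)-\varphi(0)}{|f(x)|_p^{\alpha+2}}d^4x$. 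The arithmetic identity one needs is precisely that the ratio $\tfrac{1-p^{-2}}{1-p^{\alpha-2}}$ appearing in front of $\varphi(0)$ in Lemma \ref{lemma3} at $-\alpha$ matches $\tfrac{1-p^{\alpha}}{1-p^{-\alpha-2}}$ times the value in Lemma \ref{lemma4} — this is a routine check that $p^{-2\alpha}(1-p^{-2})(1+p^{\alpha})/(1-p^{-2\alpha})$ has the right form after clearing denominators.

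Finally, (iii) and (iv) follow from (i) and (ii) by the general formula for convolution of a distribution with a test function recorded in Section \ref{Section1}, namely $(K_{\pm\alpha}\ast\varphi)(x)=\langle K_{\pm\alpha}(y),\varphi(x-y)\rangle$: apply (i), respectively (ii), to the test function $y\mapsto\varphi(x-y)$, then substitute $y\mapsto -y$ (the measure $d^4y$ and the set $\mathbb{Q}_p^4\setminus\{0\}$ are invariant, and $f$ is even so $|f(-y)|_p=|f(y)|_p$) to replace $\varphi(x-y)$ by $\varphi(x+y)$ and $\varphi(0)$ by $\varphi(x)$. The only point requiring care is that $y\mapsto\varphi(x-y)$ is again a Bruhat–Schwartz function for each fixed $x$, so (i) and (ii) genuinely apply; this is immediate. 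I expect the main obstacle to be the bookkeeping in step (ii) — correctly tracking the gamma-type prefactors and verifying the algebraic identity that lets the $\varphi(0)$ boundary term be absorbed into the regularized integral over all of $\mathbb{Q}_p^4$ — but this is a finite computation with the explicit rational functions already in hand from Lemmas \ref{lemma2} and \ref{lemma4}, not a conceptual difficulty.
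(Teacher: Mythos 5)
Your proposal is correct and takes essentially the same route as the paper: convergence from Lemma \ref{lemma2} and the ellipticity bound, identification of $\left\langle K_{-\alpha},\varphi\right\rangle$ via Lemma \ref{lemma3} evaluated at $-\alpha$ together with Lemma \ref{lemma4} and the identity $(1-p^{\alpha})(1+p^{\alpha})=-p^{2\alpha}(1-p^{-2\alpha})$, and then (iii)--(iv) deduced from (i)--(ii) through $(K_{\pm\alpha}\ast\varphi)(x)=\left\langle K_{\pm\alpha}(y),\varphi(x-y)\right\rangle$ and the evenness of $f$. The only cosmetic differences are that you finish (i) by analytic continuation in $\alpha$ where the paper rearranges the integral as in the proof of Lemma \ref{lemma3} (for the region $\Vert x\Vert_{p}>1$ the convergence is simply compact support of $\varphi$ plus ellipticity, not a geometric series), and a harmless typo where $\frac{1-p^{-2}}{1-p^{\alpha-2}}$ should read $\frac{1-p^{-2}}{1-p^{-\alpha-2}}$.
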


\begin{proof}
(i) Since every test function can be written a finite sums of characteristic
functions of balls, Lemma \ref{lemma2} implies that
\[
\dfrac{1-p^{-\alpha}}{1-p^{\alpha-2}}%
{\displaystyle\int\limits_{\mathbb{Q}_{p}^{4}\setminus\{0\}}}
|f\left(  x\right)  |_{p}^{\alpha-2}\varphi(x)d^{4}x
\]
is well-defined for $\operatorname{Re}(\alpha)>0$ and $\alpha\neq2+\alpha_{k}%
$. The announced formula follows by a calculation similar to the one done in
the proof of Lemma \ref{lemma3}.

(ii) We first note that the integral%
\[
\dfrac{1-p^{\alpha}}{1-p^{-\alpha-2}}%
{\displaystyle\int\nolimits_{\mathbb{Q}_{p}^{4}}}
\frac{\varphi(x)-\varphi(0)}{|f\left(  x\right)  |_{p}^{\alpha+2}}d^{4}x
\]
converges on $\operatorname{Re}(\alpha)>0$. Indeed, since $f\left(  x\right)
$ is an elliptic quadratic form we have%
\begin{equation}
B\left\Vert x\right\Vert _{p}^{2}\leq|f\left(  x\right)  |_{p}\leq A\left\Vert
x\right\Vert _{p}^{2}\text{ for any }x\in\mathbb{Q}_{p}^{n}%
,\label{Zuniga_ineq}%
\end{equation}
where $A$, $B$ are positive constants, cf. \cite[Lemma 1]{Z-G3}, then
\[%
{\displaystyle\int\nolimits_{\mathbb{Q}_{p}^{4}}}
\frac{\left\vert \varphi(x)-\varphi(0)\right\vert }{|f\left(  x\right)
|_{p}^{\operatorname{Re}\left(  \alpha\right)  +2}}d^{4}x\leq\frac{2\left\Vert
\varphi\right\Vert _{L^{\infty}}}{B^{\operatorname{Re}(\alpha)+2}}%
{\displaystyle\int\nolimits_{\left\Vert x\right\Vert _{p}>p^{m}}}
\frac{1}{\left\Vert x\right\Vert _{p}^{2\operatorname{Re}(\alpha)+4}}%
d^{4}x<\infty,
\]
where $m$ is the exponent of local constancy of $\varphi$.

Now%

\begin{multline*}
\dfrac{1-p^{\alpha}}{1-p^{-\alpha-2}}%
{\displaystyle\int\nolimits_{\mathbb{Q}_{p}^{4}}}
\frac{\varphi(x)-\varphi(0)}{|f\left(  x\right)  |_{p}^{\alpha+2}}d^{4}x=\\
\dfrac{1-p^{\alpha}}{1-p^{-\alpha-2}}\left\{  \int_{||x||_{p}\leq1}%
\frac{\varphi(x)-\varphi(0)}{|f\left(  x\right)  |_{p}^{2+\alpha}}d^{4}%
x+\int_{||x||_{p}>1}\frac{\varphi(x)}{|f\left(  x\right)  |_{p}^{2+\alpha}%
}d^{4}x\right\}  -\varphi(0)\dfrac{1-p^{\alpha}}{1-p^{-\alpha-2}}\\
\times\int_{||x||_{p}>1}\frac{1}{|f\left(  x\right)  |_{p}^{2+\alpha}}d^{4}x\\
=\dfrac{1-p^{\alpha}}{1-p^{-\alpha-2}}\left\{  \int_{||x||_{p}\leq1}%
\frac{\varphi(x)-\varphi(0)}{|f\left(  x\right)  |_{p}^{2+\alpha}}d^{4}%
x+\int_{||x||_{p}>1}\frac{\varphi(x)}{|f\left(  x\right)  |_{p}^{2+\alpha}%
}d^{4}x\right\} \\
-\varphi(0)\dfrac{1-p^{\alpha}}{1-p^{-\alpha-2}}\frac{p^{-2\alpha}%
(1-p^{-2})\left(  1+p^{\alpha}\right)  }{1-p^{-2\alpha}}\\
=\left\langle K_{-\alpha},\varphi\right\rangle ,
\end{multline*}
where we used Lemmas \ref{lemma4} and \ref{lemma3}.

(iii)-(iv) We recall that if $\varphi\in\mathbf{S}\left(  \mathbb{Q}_{p}%
^{4}\right)  $, then $(K_{\alpha}\ast\varphi)(x)=\left\langle K_{\alpha
}(y),\varphi(x-y)\right\rangle $, and since $K_{\alpha}(-y)=K_{\alpha}(y)$, we
have $(K_{\alpha}\ast\varphi)(x)=\left\langle K_{\alpha}(y),\varphi
(x+y)\right\rangle $. Therefore (iii) follows from (i) and (iv) follows from (ii).
\end{proof}

\subsection{Lizorkin spaces of second kind}

Consider the spaces
\[
\mathbf{\Psi}:=\mathbf{\Psi}(\mathbb{Q}_{p}^{n})=\{\psi\in\boldsymbol{S}%
(\mathbb{Q}_{p}^{n})\mid\psi(0)=0\}
\]
and
\[
\mathbf{\Phi}:=\mathbf{\Phi}(\mathbb{Q}_{p}^{n})=\{\phi\mid\phi=\mathcal{F}%
[\psi],\ \ \mathbb{\psi}\in\mathbf{\Psi}(\mathbb{Q}_{p}^{n})\}.
\]

The space $\mathbf{\Phi}$\ is called \textit{the }$p$-adic \textit{Lizorkin
space of test functions of second kind}. We equip $\mathbf{\Psi}$ and
$\mathbf{\Phi}$ with the topology inherited from $\boldsymbol{S}%
(\mathbb{Q}_{p}^{n})$. Note that $\mathcal{F}:\mathbf{\Psi\rightarrow\Phi}$ is
an isomorphism of linear spaces and $\mathcal{F}\left(  \mathcal{F}\left[
\mathbf{\Psi}\right]  \right)  =\mathbf{\Psi}$.

Let $\mathbf{\Phi^{\prime}}=\mathbf{\Phi^{\prime}}(\mathbb{Q}_{p}^{n})$ denote
the topological dual of the space $\mathbf{\Phi}(\mathbb{Q}_{p}^{n})$. This is
space of\textit{ the }$p$\textit{-adic Lizorkin space of distributions of the
second kind}.

We define the Fourier transform of distributions $J\in\mathbf{\Phi}^{\prime
}(\mathbb{Q}_{p}^{n})$ and $G\in\mathbf{\Psi}^{\prime}(\mathbb{Q}_{p}^{n})$
by
\begin{align*}
\left\langle \mathcal{F}\left[  J\right]  ,\psi\right\rangle  &  =\left\langle
J,\mathcal{F}\left[  \psi\right]  \right\rangle ,\qquad\text{for any }\psi
\in\mathbf{\Psi}(\mathbb{Q}_{p}^{n}),\\
\left\langle \mathcal{F}\left[  G\right]  ,\phi\right\rangle  &  =\left\langle
G,\mathcal{F}\left[  \phi\right]  \right\rangle ,\qquad\text{\ for any }%
\phi\in\mathbf{\Phi}(\mathbb{Q}_{p}^{n}).
\end{align*}
It is clear that a $\mathcal{F}[\mathbf{\Psi}^{\prime}(\mathbb{Q}_{p}%
^{n})]=\mathbf{\Phi}^{\prime}(\mathbb{Q}_{p}^{n})$ and $\mathcal{F}%
[\mathbf{\Phi}^{\prime}(\mathbb{Q}_{p}^{n})]=\mathbf{\Psi}^{\prime}%
(\mathbb{Q}_{p}^{n})$. For further details about $p$-adic Lizorkin spaces the
reader may consult \cite{A-K-S}.

\subsection{The Riesz kernels form an Abelian group}

The goal of this section is to prove the following result:

\begin{theorem}
\label{mainA}For $\alpha,\beta\in\mathbb{C}$, $K_{\alpha}\ast K_{\beta
}=K_{\alpha+\beta}$ in $\mathbf{\Phi}^{\prime}(\mathbb{Q}_{p}^{4})$.
\end{theorem}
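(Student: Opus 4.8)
The plan is to move the identity $K_{\alpha}\ast K_{\beta}=K_{\alpha+\beta}$ to the Fourier side, where convolution becomes multiplication, and then use the explicit functional equation from Proposition \ref{eqfun4} to identify $\mathcal{F}[K_{\alpha}]$ as (up to gamma factors and a constant) the distribution $|f^{\ast}|_{p}^{-\alpha}$ attached to the "dual" quadratic form $f^{\ast}(x)=apx_{1}^{2}-px_{2}^{2}-ax_{3}^{2}+x_{4}^{2}$. Concretely, I would first check that, when restricted to the Lizorkin space $\mathbf{\Phi}(\mathbb{Q}_{p}^{4})$, all the relevant distributions are honest elements of $\mathbf{\Phi}^{\prime}$, with no poles interfering: a test function $\phi\in\mathbf{\Phi}$ has $\mathcal{F}[\phi]=\psi$ vanishing at $0$, which kills the $\varphi(0)$-terms in Lemma \ref{lemma3} and in Proposition \ref{Prop1}, so on $\mathbf{\Phi}$ the pairing $\langle K_{\alpha},\phi\rangle$ is given by the naive convergent integral $\frac{1-p^{-\alpha}}{1-p^{\alpha-2}}\int|f(x)|_{p}^{\alpha-2}\phi(x)\,d^{4}x$ and extends holomorphically past $\alpha=2+\alpha_{k}$. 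Then I would write $\langle\mathcal{F}[K_{\alpha}],\psi\rangle=\langle K_{\alpha},\widehat{\psi}\rangle$ for $\psi\in\mathbf{\Psi}$, plug in Proposition \ref{eqfun4} (with $s$ matched so that $s-2=\alpha-2$, i.e. $s=\alpha$), and read off
\[
\mathcal{F}[K_{\alpha}]=c(\alpha)\,|f^{\ast}(x)|_{p}^{-\alpha}\quad\text{on }\mathbf{\Psi}^{\prime}(\mathbb{Q}_{p}^{4}),
\]
where the normalizing factor $c(\alpha)=\frac{1-p^{-\alpha}}{1-p^{\alpha-2}}\cdot\frac{1-p^{s-2}}{1-p^{-s}}\big|_{s=\alpha}=\frac{(1-p^{-\alpha})^2}{(1-p^{\alpha-2})^2}$ — I would compute this exactly and, crucially, I expect it to come out multiplicatively trivial in the right sense, namely that after the correct normalization $\mathcal{F}[K_{\alpha}]$ equals $|f^{\ast}|_{p}^{-\alpha}$ on the nose (the kernel $K_\alpha$ was designed precisely so that its gamma factor is $1$). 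This is the step where I would be most careful with constants.

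**The multiplicative identity on the dual side.** Once $\mathcal{F}[K_{\alpha}]=|f^{\ast}|_{p}^{-\alpha}$ (as a distribution on $\mathbf{\Psi}$, extended holomorphically in $\alpha$), the claim $K_{\alpha}\ast K_{\beta}=K_{\alpha+\beta}$ becomes, after applying $\mathcal{F}$ and using $\mathcal{F}[f\ast g]=\mathcal{F}[f]\cdot\mathcal{F}[g]$ from the preliminaries,
\[
|f^{\ast}(x)|_{p}^{-\alpha}\cdot|f^{\ast}(x)|_{p}^{-\beta}=|f^{\ast}(x)|_{p}^{-\alpha-\beta}\quad\text{in }\mathbf{\Psi}^{\prime}(\mathbb{Q}_{p}^{4}).
\]
Since $f^{\ast}$ is again elliptic (it is $\mathrm{GL}_4(\mathbb{Q}_p)$-equivalent to $f$, so $f^{\ast}(x)=0\Leftrightarrow x=0$), the function $|f^{\ast}(x)|_{p}^{-\alpha}$ is locally integrable away from $0$, and for test functions in $\mathbf{\Psi}$ (vanishing at $0$) the pointwise product of the two locally integrable functions is again locally integrable and defines the expected distribution — this is exactly the setting of Lemma \ref{producdistributions}. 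So the pointwise identity $|f^{\ast}|^{-\alpha}|f^{\ast}|^{-\beta}=|f^{\ast}|^{-\alpha-\beta}$, which is trivially true as functions on $\mathbb{Q}_p^4\setminus\{0\}$, lifts to the distributional identity on $\mathbf{\Psi}^{\prime}$. I would verify the hypothesis of Lemma \ref{producdistributions} using the ellipticity bound $B\|x\|_p^2\le|f^{\ast}(x)|_p\le A\|x\|_p^2$ (inequality (\ref{Zuniga_ineq})) to control the $\xi\mapsto\int g(x)\phi(x)f(x-\xi)\,d^4x$ continuity at $\xi=0$ against $\phi\in\mathbf{\Phi}$; the vanishing of $\phi$'s Fourier transform at the origin is what makes the otherwise-divergent integral near $0$ harmless.

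**Assembling the proof and the main obstacle.** Finally I would run the Fourier inversion backwards: $K_{\alpha}\ast K_{\beta}=\mathcal{F}\big[\mathcal{F}[K_{\alpha}]\cdot\mathcal{F}[K_{\beta}]\big]\big(-\,\cdot\,\big)$, which by the above equals $\mathcal{F}\big[|f^{\ast}|_{p}^{-\alpha-\beta}\big](-\,\cdot\,)$; applying the functional equation (Proposition \ref{eqfun4}) once more in the reverse direction — or simply re-inverting the identity $\mathcal{F}[K_{\alpha+\beta}]=|f^{\ast}|_{p}^{-\alpha-\beta}$ — yields $K_{\alpha+\beta}$. One first establishes this for $\alpha,\beta$ in a suitable open region (say $\mathrm{Re}(\alpha),\mathrm{Re}(\beta)>0$ with $\alpha,\beta,\alpha+\beta$ avoiding $2+\alpha_k$), where the convolution $K_\alpha\ast K_\beta$ is easily seen to exist via Proposition \ref{Prop1}(iii)–(iv), and then extends to all $\alpha,\beta\in\mathbb{C}$ by holomorphic continuation in each variable, both sides being holomorphic $\mathbf{\Phi}^{\prime}$-valued functions on $\mathbb{C}^2$ (here one uses that on $\mathbf{\Phi}$ the Lizorkin space removes the poles, as in Lemma \ref{lemma3} and the discussion after it). I expect the main obstacle to be the careful bookkeeping that the convolution $K_\alpha\ast K_\beta$ genuinely exists as a distribution on $\mathbf{\Phi}$ and that passing to the Fourier transform is legitimate there — i.e. justifying $\mathcal{F}[K_\alpha\ast K_\beta]=\mathcal{F}[K_\alpha]\cdot\mathcal{F}[K_\beta]$ in $\mathbf{\Psi}^{\prime}$ — together with pinning down the normalizing constant so that it really collapses to $1$; the algebraic heart of the argument ($|f^{\ast}|^{-\alpha}\cdot|f^{\ast}|^{-\beta}=|f^{\ast}|^{-\alpha-\beta}$) is essentially immediate once the functional equation has done its work.
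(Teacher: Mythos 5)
Your proposal follows essentially the same route as the paper's proof: pass to the Fourier side, use the functional equation of Proposition \ref{eqfun4} to identify $\mathcal{F}\left[ K_{\alpha}\right] $ with the kernel $|f^{\mathbf{\circ}}(x)|_{p}^{-\alpha}$ of the dual form (this is the paper's $K_{-\alpha}^{\mathbf{\circ}}$ and Propositions \ref{fourietran} and \ref{fourietransformtheorem}; note that what you call $f^{\ast}$ is the paper's $f^{\mathbf{\circ}}$), upgrade the trivial pointwise identity $|f^{\mathbf{\circ}}|_{p}^{-\alpha}\cdot|f^{\mathbf{\circ}}|_{p}^{-\beta}=|f^{\mathbf{\circ}}|_{p}^{-\alpha-\beta}$ to $\mathbf{\Psi}^{\prime}(\mathbb{Q}_{p}^{4})$ via Lemma \ref{producdistributions} and the ellipticity bound (the paper's Lemma \ref{lemma6}), and invert. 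One organizational difference: the paper argues entirely from the product side, so the existence of $K_{-\alpha}^{\mathbf{\circ}}\cdot K_{-\beta}^{\mathbf{\circ}}$ together with $\mathcal{F}\left[ f\cdot g\right] =\mathcal{F}\left[ f\right] \ast\mathcal{F}\left[ g\right] $ delivers both the existence of the convolution and the identity at once, which disposes of what you single out as the main obstacle.

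Two local points in your write-up need correcting, though neither damages the strategy. First, the constant: with $s=\alpha$ the factor in Proposition \ref{eqfun4} is $\frac{1-p^{\alpha-2}}{1-p^{-\alpha}}$, so $c(\alpha)=\frac{1-p^{-\alpha}}{1-p^{\alpha-2}}\cdot\frac{1-p^{\alpha-2}}{1-p^{-\alpha}}=1$ exactly, not $\frac{(1-p^{-\alpha})^{2}}{(1-p^{\alpha-2})^{2}}$ as written; the value $1$ is not cosmetic, since any nonconstant $c$ would break the needed multiplicativity $c(\alpha)c(\beta)=c(\alpha+\beta)$. Second, a function $\phi\in\mathbf{\Phi}$ does not satisfy $\phi(0)=0$; the Lizorkin condition gives $\int\phi\,d^{4}x=0$, so the $\varphi(0)$-terms in Lemma \ref{lemma3} and Proposition \ref{Prop1} are not killed. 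Rather, the residues at $\alpha=2+\alpha_{k}$ cancel because $\int\phi=0$, which is exactly the mechanism of Lemma \ref{lemma5}. Relatedly, the paper treats the exceptional parameters explicitly, defining $K_{2+\alpha_{k}}$ as the logarithmic distribution obtained as the limit and computing its Fourier transform directly (Lemma \ref{Lemma_Fourier_K2}), whereas you appeal to holomorphic continuation in $(\alpha,\beta)$; that is workable on Lizorkin spaces, but to prove the theorem as stated you must still identify the continued value at $\alpha=2+\alpha_{k}$ with the distribution the paper names $K_{2+\alpha_{k}}$, which again is Lemma \ref{lemma5}.
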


Before giving the proof we need to establish several auxiliary results.

\begin{definition}
Set $\ f^{\mathbf{\circ}}(x):=apx_{1}^{2}-px_{2}^{2}-ax_{3}^{2}+x_{4}^{2}$.
The Riesz kernel attached to $f^{\mathbf{\circ}}(x)$ is the distribution%
\[
K_{-\alpha}^{\mathbf{\circ}}(x):=|f^{\mathbf{\circ}}(x)|_{p}^{-\alpha}\text{
in }\mathbf{\Psi}^{\prime}(\mathbb{Q}_{p}^{4})\text{, for }\alpha\in
\mathbb{C}\text{.}%
\]

\end{definition}

\begin{proposition}
\label{fourietran} Considering $K_{\alpha}\in\mathbf{\Phi}^{\prime}%
(\mathbb{Q}_{p}^{4})$ and $K_{-\alpha}^{\mathbf{\circ}}\in\mathbf{\Psi
}^{\prime}(\mathbb{Q}_{p}^{4})$, we have%
\[
\mathcal{F}\left[  K_{\alpha}\right]  =K_{-\alpha}^{\mathbf{\circ}}\text{ for
}\alpha\not =2+\alpha_{k}\text{ and }\alpha\not =\alpha_{k},\text{ }%
k\in\mathbb{Z}\text{.}%
\]

\end{proposition}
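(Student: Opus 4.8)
The plan is to compute the Fourier transform of $K_{\alpha}$ directly from its definition as a distribution on $\mathbf{\Phi}$, and identify it with the pointwise distribution $|f^{\circ}|_p^{-\alpha}$ on $\mathbf{\Psi}$. Recall $K_{\alpha}(x)=\frac{1-p^{-\alpha}}{1-p^{\alpha-2}}|f(x)|_p^{\alpha-2}=\frac{1-p^{-\alpha}}{1-p^{\alpha-2}}|f(x)|_p^{(\alpha-1)-n/2}$ with $n=4$, so $\langle K_{\alpha},\varphi\rangle$ is (up to the prefactor) the local zeta function $Z_{\varphi}(\alpha-1,f)$ in the notation of (\ref{zeatfunction}) with $\beta=1$. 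First I would take an arbitrary $\psi\in\mathbf{\Psi}(\mathbb{Q}_p^4)$ and write $\langle \mathcal{F}[K_{\alpha}],\psi\rangle=\langle K_{\alpha},\mathcal{F}[\psi]\rangle$; since $\mathcal{F}[\psi]=\widehat{\psi}\in\mathbf{S}(\mathbb{Q}_p^4)$ is an ordinary test function, this equals $\frac{1-p^{-\alpha}}{1-p^{\alpha-2}}Z_{\widehat{\psi}}(\alpha-1,f)$, at least for $\operatorname{Re}(\alpha)$ large, extended by meromorphic continuation elsewhere (Lemma \ref{lemma3}).

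Next I would apply the functional equation. Here $f(x)=x_1^2-ax_2^2-px_3^2+apx_4^2$ is exactly the quadratic form treated in Proposition \ref{eqfun4}, with $a$ a quadratic non-residue mod $p$; that proposition gives
\[
\int_{\mathbb{Q}_p^4\setminus\{0\}}|f(x)|_p^{s-2}\widehat{\varphi}(x)\,d^4x=\frac{1-p^{s-2}}{1-p^{-s}}\int_{\mathbb{Q}_p^4\setminus\{0\}}|f^{\circ}(x)|_p^{-s}\varphi(x)\,d^4x,
\]
where $f^{\circ}(x)=apx_1^2-px_2^2-ax_3^2+x_4^2$ is precisely the form in the Definition preceding this Proposition. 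Setting $\varphi=\psi$ and $s=\alpha$ (so $s-2=\alpha-2$ matches the exponent in $K_{\alpha}$), the left side is $\int|f|_p^{\alpha-2}\widehat{\psi}\,d^4x$, and multiplying by the prefactor $\frac{1-p^{-\alpha}}{1-p^{\alpha-2}}$ the gamma factor $\frac{1-p^{\alpha-2}}{1-p^{-\alpha}}$ cancels it completely. Thus $\langle\mathcal{F}[K_{\alpha}],\psi\rangle=\int_{\mathbb{Q}_p^4\setminus\{0\}}|f^{\circ}(x)|_p^{-\alpha}\psi(x)\,d^4x$. Since $\psi(0)=0$ and $f^{\circ}$ is elliptic (so $|f^{\circ}(x)|_p^{-\alpha}$ is locally integrable away from the single zero $x=0$, and $\psi$ vanishing at $0$ kills any convergence issue there — indeed $\psi$ is locally constant so identically zero near $0$), the integral over $\mathbb{Q}_p^4\setminus\{0\}$ equals the pairing $\langle |f^{\circ}|_p^{-\alpha},\psi\rangle$ as an element of $\mathbf{\Psi}'(\mathbb{Q}_p^4)$, i.e. $\langle K_{-\alpha}^{\circ},\psi\rangle$. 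This proves $\mathcal{F}[K_{\alpha}]=K_{-\alpha}^{\circ}$.

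One caveat on the range of $\alpha$: the identity of integrals holds for $\operatorname{Re}(\alpha)$ in the strip where both sides converge absolutely, and then extends to all $\alpha\neq 2+\alpha_k$ and $\alpha\neq\alpha_k$ by meromorphic continuation of both sides as distributions — the left side via Lemma \ref{lemma3}, the right side because on $\mathbf{\Psi}$ the integral $\int|f^{\circ}|_p^{-\alpha}\psi\,d^4x$ is entire in $\alpha$ (for each $\psi\in\mathbf{\Psi}$, $\psi$ is supported away from the zero of $f^{\circ}$, so no pole arises). The excluded points $\alpha=2+\alpha_k$ are the poles of the prefactor/$K_{\alpha}$ itself, where $K_{\alpha}$ degenerates to $\delta$ by (\ref{deltacero}); the points $\alpha=\alpha_k$ are where $1-p^{-\alpha}$ vanishes, making $K_{\alpha}\equiv 0$ while $K_{-\alpha}^{\circ}$ is not zero, so the identity genuinely must exclude them.

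The main obstacle, such as it is, is bookkeeping: making sure the gamma factor produced by Proposition \ref{eqfun4} is exactly the reciprocal of the normalizing constant $\frac{1-p^{-\alpha}}{1-p^{\alpha-2}}$ built into $K_{\alpha}$ — this is precisely why that particular normalization was chosen — and being careful that $\widehat{\psi}$ is the right argument to feed into the functional equation (the functional equation in Proposition \ref{eqfun4} has the Fourier-transformed function on the left, which matches $\langle K_{\alpha},\widehat{\psi}\rangle=\langle\mathcal{F}[K_{\alpha}],\psi\rangle$). No genuinely hard analysis is needed beyond the elliptic inequality (\ref{Zuniga_ineq}) for the convergence/local-integrability bookkeeping, which is already in hand.
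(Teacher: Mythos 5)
Your proof is correct and is essentially the paper's own argument: the paper proves this proposition by simply invoking Proposition \ref{eqfun4}, and you have carried out exactly that computation, pairing $\mathcal{F}[K_{\alpha}]$ with $\psi\in\mathbf{\Psi}$, applying the functional equation at $s=\alpha$ so that the gamma factor cancels the normalizing constant of $K_{\alpha}$, and extending in $\alpha$ by analytic continuation (Lemma \ref{lemma3} on the left, an entire function on the right since $\psi$ vanishes near the origin). One minor inaccuracy in your closing aside: at $\alpha=\alpha_{k}$ the paper's convention (\ref{deltacero}) gives $K_{\alpha_{k}}=\delta$ rather than the zero distribution, and $\mathcal{F}[\delta]=1=K_{-\alpha_{k}}^{\mathbf{\circ}}$, so the exclusion of $\alpha_{k}$ is not forced by a failure of the identity; this does not affect your proof of the proposition as stated.
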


\begin{proof}
The formula follows from Proposition \ref{eqfun4}.
\end{proof}

\begin{lemma}
\label{lemma5}%
\begin{equation}
\lim_{\alpha\rightarrow2+\alpha_{k}}\left\langle K_{\alpha},\varphi
\right\rangle =-\dfrac{1-p^{-2}}{\ln p}\left\langle \ln|f(x)|_{p}%
,\varphi(x)\right\rangle \text{ for }\varphi\in\mathbf{\Phi}(\mathbb{Q}%
_{p}^{4}). \label{K_alpha}%
\end{equation}

\end{lemma}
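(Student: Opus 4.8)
The plan is to evaluate the limit directly from the explicit meromorphic formula for $\langle K_\alpha,\varphi\rangle$ given in Lemma \ref{lemma3}, and to identify the residual term at $\alpha = 2+\alpha_k$ with the (Lizorkin-)regularized distribution $\ln|f(x)|_p$. First I would recall that for $\varphi \in \mathbf{\Phi}(\mathbb{Q}_p^4)$ one has $\varphi(0) = \mathcal{F}[\psi](0) = \int \psi = 0$ since $\psi \in \mathbf{\Psi}$ integrates to zero; hence the term $\varphi(0)\frac{1-p^{-2}}{1-p^{\alpha-2}}$ in the formula of Lemma \ref{lemma3} vanishes identically, and likewise the $-\varphi(0)$ corrections inside the bracketed integrals disappear. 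So on $\mathbf{\Phi}$ the formula collapses to
\[
\langle K_\alpha,\varphi\rangle = \frac{1-p^{-\alpha}}{1-p^{\alpha-2}}\int_{\mathbb{Q}_p^4\smallsetminus\{0\}} \varphi(x)\,|f(x)|_p^{\alpha-2}\,d^4x,
\]
where the integral now converges for all $\alpha$ near $2+\alpha_k$ because $\varphi$ vanishes near $0$ (being the Fourier transform of a compactly supported $\psi$, $\varphi$ is locally constant, so $\varphi \equiv \varphi(0) = 0$ on a neighborhood of the origin) — in fact the integral is an entire function of $\alpha$ on $\mathbf{\Phi}$.

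Next I would compute the limit as $\alpha \to \alpha_0 := 2+\alpha_k$ by expanding each factor. Write $\alpha = \alpha_0 + \varepsilon$. Since $p^{\alpha_k} = e^{2k\pi\sqrt{-1}} = 1$, we have $p^{\alpha - 2} = p^{\varepsilon}p^{\alpha_k} = e^{\varepsilon\ln p}$, so $1 - p^{\alpha-2} = -\varepsilon\ln p + O(\varepsilon^2)$, while $1 - p^{-\alpha} \to 1 - p^{-\alpha_0} = 1 - p^{-2}$ (using $p^{-\alpha_k}=1$ again). Meanwhile $|f(x)|_p^{\alpha-2} = |f(x)|_p^{\varepsilon}|f(x)|_p^{\alpha_k} = |f(x)|_p^{\varepsilon}$ since $|f(x)|_p$ is a power of $p$ and $p^{\alpha_k\,\mathrm{ord}} = 1$; expanding, $|f(x)|_p^{\varepsilon} = 1 + \varepsilon\ln|f(x)|_p + O(\varepsilon^2)$. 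The leading term $\int\varphi(x)\,d^4x$ of the integral contributes, when multiplied by $\frac{1-p^{-\alpha}}{1-p^{\alpha-2}} \sim \frac{1-p^{-2}}{-\varepsilon\ln p}$, a pole $\frac{1-p^{-2}}{-\varepsilon\ln p}\int\varphi$; but $\int_{\mathbb{Q}_p^4}\varphi(x)\,d^4x = \mathcal{F}[\varphi](0)$ and $\mathcal{F}[\varphi] = \mathcal{F}[\mathcal{F}[\psi]](x) = \psi(-x)$, so $\int\varphi = \psi(0) = 0$. Thus the pole cancels on $\mathbf{\Phi}$, and the limit is governed by the $O(\varepsilon)$ term of the integral:
\[
\lim_{\alpha\to\alpha_0}\langle K_\alpha,\varphi\rangle = \frac{1-p^{-2}}{-\ln p}\int_{\mathbb{Q}_p^4\smallsetminus\{0\}}\varphi(x)\ln|f(x)|_p\,d^4x = -\frac{1-p^{-2}}{\ln p}\langle \ln|f(x)|_p,\varphi(x)\rangle,
\]
which is the claimed identity.

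The main technical point to handle carefully is the interchange of the limit $\alpha \to \alpha_0$ with the integral over $\mathbb{Q}_p^4\smallsetminus\{0\}$, i.e. justifying the Taylor expansion $|f(x)|_p^{\varepsilon} = 1 + \varepsilon\ln|f(x)|_p + O(\varepsilon^2)$ uniformly enough to integrate term by term against $\varphi$. Since $\varphi \in \mathbf{\Phi}$ has compact support, say in $\|x\|_p \le p^N$, and vanishes on a ball $\|x\|_p \le p^{-M}$ around the origin, the function $\ln|f(x)|_p$ is bounded on $\mathrm{supp}\,\varphi$ by the two-sided estimate $B\|x\|_p^2 \le |f(x)|_p \le A\|x\|_p^2$ in \eqref{Zuniga_ineq}; hence $\varphi(x)\ln|f(x)|_p \in L^1$ and dominated convergence applies with no difficulty. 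One should also note that $\ln|f(x)|_p$ defines a distribution on $\mathbf{\Phi}'(\mathbb{Q}_p^4)$ precisely because of this support behavior of Lizorkin test functions (there is no singularity at $0$ to regularize), so the right-hand side of \eqref{K_alpha} is well-defined. I do not expect any serious obstacle; the whole argument is essentially bookkeeping around the triple cancellation $\varphi(0)=0$, $\int\varphi=0$, and $p^{\alpha_k}=1$.
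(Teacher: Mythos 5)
Your overall route coincides with the paper's proof: for $\operatorname{Re}(\alpha)$ near $2$ write $\langle K_\alpha,\varphi\rangle=\frac{1-p^{-\alpha}}{1-p^{\alpha-2}}\int_{\mathbb{Q}_p^4\smallsetminus\{0\}}|f(x)|_p^{\alpha-2}\varphi(x)\,d^4x$, use the zero-mean property of second-kind Lizorkin test functions to cancel the pole coming from $1-p^{\alpha-2}$, and pass to the limit under the integral sign; your derivation $\int\varphi\,d^4x=\mathcal{F}[\varphi](0)=\psi(0)=0$ is exactly the cancellation the paper uses. However, there is a genuine error in the auxiliary properties you attribute to $\mathbf{\Phi}(\mathbb{Q}_p^4)$. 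You claim $\varphi(0)=\int\psi=0$ ``since $\psi\in\mathbf{\Psi}$ integrates to zero'' and conclude that $\varphi$ vanishes on a neighborhood of the origin. But $\mathbf{\Psi}$ is defined by the condition $\psi(0)=0$, not $\int\psi=0$; hence $\varphi(0)=\int\psi\,d^4x$ need not vanish, and $\varphi\in\mathbf{\Phi}$ need not vanish near $0$. (One-dimensional example: $\psi$ the characteristic function of $1+p\mathbb{Z}_p$ lies in $\mathbf{\Psi}$, yet $\varphi=\mathcal{F}[\psi]$ has $\varphi(0)=p^{-1}\neq0$.) The vanishing-near-the-origin property belongs to functions in $\mathbf{\Psi}$ (local constancy plus $\psi(0)=0$), not to their Fourier transforms; the only property of $\mathbf{\Phi}$ available --- and the only one needed --- is $\int\varphi=0$.

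This error matters precisely where you justify interchanging the limit with the integral: you argue that $\ln|f(x)|_p$ is bounded on $\operatorname{supp}\varphi$ because $\varphi$ vanishes on a ball about $0$, so dominated convergence applies ``with no difficulty.'' For a general $\varphi\in\mathbf{\Phi}$ the support may contain the origin, $\ln|f(x)|_p$ is unbounded there, and the quotient $\frac{|f(x)|_p^{\varepsilon}-1}{\varepsilon}$ is not dominated by a constant multiple of $|\ln|f(x)|_p|$ uniformly for complex $\varepsilon$ near $0$ as $|f(x)|_p\to0$ (for $\operatorname{Re}\varepsilon<0$ it grows like $|f(x)|_p^{-|\operatorname{Re}\varepsilon|}$). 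The lemma itself is unaffected: by (\ref{Zuniga_ineq}) the function $\ln|f(x)|_p$ has only a logarithmic singularity at $0$, hence is locally integrable in dimension $4$, so the right-hand side of (\ref{K_alpha}) is well defined without any support restriction; and the passage to the limit can be salvaged by dominating, for $|\operatorname{Re}(\beta-2)|\le\delta$ with $\delta$ small, by $C\bigl(1+\bigl|\ln|f(x)|_p\bigr|\bigr)\bigl(|f(x)|_p^{\delta}+|f(x)|_p^{-\delta}\bigr)|\varphi(x)|$, which is integrable near the origin because $|f(x)|_p^{-\delta}\le B^{-\delta}\Vert x\Vert_p^{-2\delta}$ and $2\delta<4$. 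So the repair is routine, but as written your domination step (and your remark that the integral is entire in $\alpha$, and that no regularization is needed at $0$) rests on a false description of the space $\mathbf{\Phi}$.
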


\begin{remark}
We understand the right-hand side in (\ref{K_alpha}) as the distribution
induced by the locally integrable function $\ln|f(x)|_{p}:\mathbb{Q}_{p}%
^{4}\setminus\{0\}\rightarrow\mathbb{R}$.
\end{remark}

\begin{proof}
Since%
\begin{multline*}
\lim_{\alpha\rightarrow2+\alpha_{k}}\left\langle K_{\alpha},\varphi
\right\rangle =\lim_{\alpha\rightarrow2+\alpha_{k}}\frac{1-p^{-\alpha}%
}{1-p^{\alpha-2}}%
{\displaystyle\int\limits_{\mathbb{Q}_{p}^{4}\smallsetminus\{0\}}}
|f(x)|_{p}^{\alpha-2}\varphi(x)d^{4}x\text{ }\\
=\lim_{\beta\rightarrow2}\left(  1-p^{-\beta}\right)
{\displaystyle\int\limits_{\mathbb{Q}_{p}^{4}\smallsetminus\{0\}}}
\left[  \frac{|x_{1}^{2}-ax_{2}^{2}-px_{3}^{2}+apx_{4}^{2}|_{p}^{\beta-2}%
-1}{1-p^{\beta-2}}\right]  \varphi(x)d^{4}x
\end{multline*}
by taking $\beta=\alpha-\alpha_{k}$ and by using the fact that $\int
\varphi(x)d^{4}x=0$. Now by passing to the limit under the integral sign we
have%
\[
\lim_{\alpha\rightarrow2+\alpha_{k}}\left\langle K_{\alpha},\varphi
\right\rangle =-(1-p^{-2})%
{\displaystyle\int\limits_{\mathbb{Q}_{p}^{4}\smallsetminus\{0\}}}
\frac{\ln|f(x)|_{p}}{\ln p}\varphi(x)d^{4}x.
\]
The passage to the limit under the integral sign is justified by the Lebesgue
Dominated Convergence Theorem and the inequality
\[
\left\vert \frac{e^{(\beta-2)\ln|f\left(  x\right)  |_{p}}-1}{1-e^{(\beta
-2)\ln p}}\right\vert \leq C\left\vert \frac{\ln|f(x)|_{p}}{\ln p}\right\vert
\text{ for }x\in\text{supp }\varphi\subset\mathbb{Q}_{p}^{4}\setminus
\{0\}\text{ and }|\beta-2|\leq1\text{,}%
\]
where $C=C(p,\text{supp }\varphi)$ is a positive constant.
\end{proof}

\begin{definition}
We define
\[
K_{2+\alpha_{k}}(x)=-\dfrac{1-p^{-2}}{\ln p}\ln|f\left(  x\right)  |_{p}%
\in\mathbf{\Phi^{\prime}}(\mathbb{Q}_{p}^{4}).
\]

\end{definition}

\begin{lemma}
\label{Lemma_Fourier_K2}%
\[
\left\langle \mathcal{F}\left[  K_{2+\alpha_{k}}\right]  ,\varphi\right\rangle
=\left\langle K_{-2}^{\mathbf{\circ}},\varphi\right\rangle ,\quad\text{for
}\varphi\in\mathbf{\Psi}(\mathbb{Q}_{p}^{4}).
\]

\end{lemma}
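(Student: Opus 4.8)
The plan is to obtain the identity by analytic continuation, letting $\alpha\rightarrow 2+\alpha_{k}$ in the Fourier relation $\mathcal{F}\left[ K_{\alpha}\right] =K_{-\alpha}^{\mathbf{\circ}}$ furnished by Proposition \ref{fourietran}. Fix $\psi\in\mathbf{\Psi}(\mathbb{Q}_{p}^{4})$, so that $\mathcal{F}\left[ \psi\right] \in\mathbf{\Phi}(\mathbb{Q}_{p}^{4})$. Since the points $\alpha_{j}$ and $2+\alpha_{j}$, $j\in\mathbb{Z}$, form a discrete subset of $\mathbb{C}$, for $\alpha$ in a small enough punctured disc about $2+\alpha_{k}$ we have $\alpha\neq\alpha_{j}$ and $\alpha\neq 2+\alpha_{j}$ for all $j$; Proposition \ref{fourietran} together with the definition of the Fourier transform on $\mathbf{\Phi}^{\prime}$ then gives, for such $\alpha$,
\[
\left\langle K_{\alpha},\mathcal{F}\left[  \psi\right]  \right\rangle
=\left\langle \mathcal{F}\left[  K_{\alpha}\right]  ,\psi\right\rangle
=\left\langle K_{-\alpha}^{\mathbf{\circ}},\psi\right\rangle .
\]

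On the left, Lemma \ref{lemma5} applied to $\varphi=\mathcal{F}\left[ \psi\right] $, together with the definition $K_{2+\alpha_{k}}=-\frac{1-p^{-2}}{\ln p}\ln\left\vert f\right\vert _{p}$, yields
\[
\lim_{\alpha\rightarrow 2+\alpha_{k}}\left\langle K_{\alpha},\mathcal{F}
\left[  \psi\right]  \right\rangle =\left\langle K_{2+\alpha_{k}},\mathcal{F}
\left[  \psi\right]  \right\rangle =\left\langle \mathcal{F}\left[
K_{2+\alpha_{k}}\right]  ,\psi\right\rangle .
\]
On the right, I would argue that $\alpha\mapsto\left\langle K_{-\alpha}^{\mathbf{\circ}},\psi\right\rangle $ is entire, hence continuous at $2+\alpha_{k}$: the test function $\psi$ is locally constant with $\psi(0)=0$, so it vanishes on a ball $\left\Vert x\right\Vert _{p}\leq p^{-m}$; moreover $f^{\mathbf{\circ}}=ap\,f^{\ast}$ is obtained from the elliptic form $f$ by rescaling the variables and multiplying by the nonzero constant $ap$, hence is itself elliptic and does not vanish on the compact set $\operatorname{supp}\psi$, so $\left\vert f^{\mathbf{\circ}}(x)\right\vert _{p}$ lies between two positive constants on $\operatorname{supp}\psi$ (compare (\ref{Zuniga_ineq})). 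Consequently $\int_{\mathbb{Q}_{p}^{4}\setminus\{0\}}\left\vert f^{\mathbf{\circ}}(x)\right\vert _{p}^{-\alpha}\psi(x)\,d^{4}x$ converges for every $\alpha\in\mathbb{C}$ and defines an entire function of $\alpha$ (differentiation under the integral sign), so that
\[
\lim_{\alpha\rightarrow 2+\alpha_{k}}\left\langle K_{-\alpha}^{\mathbf{\circ}
},\psi\right\rangle =\left\langle K_{-(2+\alpha_{k})}^{\mathbf{\circ}}
,\psi\right\rangle .
\]

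Finally I would observe that $\left\vert f^{\mathbf{\circ}}(x)\right\vert _{p}^{-\alpha_{k}}=1$ for every $x\neq 0$: indeed $\left\vert f^{\mathbf{\circ}}(x)\right\vert _{p}=p^{r}$ for some $r\in\mathbb{Z}$ and $p^{-r\alpha_{k}}=e^{-2\pi k r\sqrt{-1}}=1$, whence $\left\langle K_{-(2+\alpha_{k})}^{\mathbf{\circ}},\psi\right\rangle =\left\langle K_{-2}^{\mathbf{\circ}},\psi\right\rangle $ for all $\psi\in\mathbf{\Psi}(\mathbb{Q}_{p}^{4})$. Chaining the three displayed limits with the equality $\left\langle K_{\alpha},\mathcal{F}\left[ \psi\right] \right\rangle =\left\langle K_{-\alpha}^{\mathbf{\circ}},\psi\right\rangle $ valid on the punctured disc, we conclude $\left\langle \mathcal{F}\left[ K_{2+\alpha_{k}}\right] ,\psi\right\rangle =\left\langle K_{-2}^{\mathbf{\circ}},\psi\right\rangle $, which is the assertion. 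The only step needing a little care is the continuity at $2+\alpha_{k}$ of $\alpha\mapsto\left\langle K_{-\alpha}^{\mathbf{\circ}},\psi\right\rangle $; given the ellipticity of $f^{\mathbf{\circ}}$ and that $\psi$ vanishes near the origin this is routine, and the rest is bookkeeping with Proposition \ref{fourietran} and Lemma \ref{lemma5}.
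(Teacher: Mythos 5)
Your proof is correct and takes essentially the same route as the paper: pass to the limit $\alpha\rightarrow 2+\alpha_{k}$ in the identity $\mathcal{F}\left[K_{\alpha}\right]=K_{-\alpha}^{\mathbf{\circ}}$ of Proposition \ref{fourietran}, using Lemma \ref{lemma5} on the $\mathbf{\Phi}^{\prime}$-side and, on the $\mathbf{\Psi}^{\prime}$-side, the ellipticity of $f^{\mathbf{\circ}}$ together with the vanishing of $\psi$ near the origin to justify continuity of $\alpha\mapsto\left\langle K_{-\alpha}^{\mathbf{\circ}},\psi\right\rangle$. Your two deviations are cosmetic: identifying $K_{-(2+\alpha_{k})}^{\mathbf{\circ}}=K_{-2}^{\mathbf{\circ}}$ via $|f^{\mathbf{\circ}}(x)|_{p}^{-\alpha_{k}}=1$ corresponds to the paper's use of $K_{2+\alpha_{k}}=K_{2}$, and your remark that the right-hand side is entire (the integrand being supported where $|f^{\mathbf{\circ}}|_{p}$ is bounded between positive constants) replaces the paper's dominated-convergence estimate.
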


\begin{proof}
By using the fact that $K_{2+\alpha_{k}}=K_{2}$ and by Proposition
\ref{fourietran} we get
\[
\left\langle \mathcal{F}\left[  K_{2+\alpha_{k}}\right]  ,\varphi\right\rangle
=\lim_{\alpha\rightarrow2+\alpha_{k}}\left\langle K_{\alpha},\mathcal{F}%
\left[  \varphi\right]  \right\rangle =\lim_{\alpha\rightarrow2}\left\langle
K_{\alpha},\mathcal{F}\left[  \varphi\right]  \right\rangle =\lim
_{\alpha\rightarrow2}\left\langle K_{-\alpha}^{\mathbf{\circ}},\varphi
\right\rangle .
\]
By using $\varphi\left(  0\right)  =0$, we have
\[
\lim_{\alpha\rightarrow2}\left\langle K_{\alpha}^{\mathbf{\circ}}%
,\varphi\right\rangle =\lim_{\alpha\rightarrow2}%
{\displaystyle\int\limits_{\left\Vert x\right\Vert _{p}>p^{m}}}
\left\vert f^{\mathbf{\circ}}\left(  x\right)  \right\vert _{p}^{-\alpha
}\varphi\left(  x\right)  d^{4}x,
\]
where $m\in\mathbb{Z}$ is the exponent of local constancy of $\varphi$. We now
use \ the fact that $f^{\mathbf{\circ}}\left(  x\right)  $ is an elliptic
quadratic form to get
\begin{equation}
B\left\Vert x\right\Vert _{p}^{2}\leq|f^{\mathbf{\circ}}\left(  x\right)
|_{p}\leq A\left\Vert x\right\Vert _{p}^{2}\text{ for any }x\in\mathbb{Q}%
_{p}^{n}, \label{Zuniga_ineq2}%
\end{equation}
where $A$, $B$ are positive constants, cf. \cite[Lemma 1]{Z-G3}. Without loss
of generality we may assume that $B\leq1$ and that $m>0$. Then%

\[
\left\vert f^{\mathbf{\circ}}\left(  x\right)  \right\vert _{p}%
^{-\operatorname{Re}(\alpha)}\left\vert \varphi\left(  x\right)  \right\vert
\leq\frac{\left\vert \varphi\left(  x\right)  \right\vert }%
{B^{\operatorname{Re}(\alpha)}\left\Vert x\right\Vert _{p}^{2\operatorname{Re}%
(\alpha)}}\leq\frac{\left\vert \varphi\left(  x\right)  \right\vert
}{B^{2+\epsilon}p^{2m\left(  2-\epsilon\right)  }}%
\]
which is an integrable function on $\left(  \mathbb{Q}_{p}^{4}\smallsetminus
B_{m}(0)\right)  \cap$supp$\varphi$ and $\alpha\in\left(  2-\epsilon
,2+\epsilon\right)  $, where $\epsilon$ is a small fixed positive constant.
Therefore, by applying the Lebesgue Dominated Convergence Theorem,
\[
\lim_{\alpha\rightarrow2}\left\langle K_{\alpha}^{\mathbf{\circ}}%
,\varphi\right\rangle =%
{\displaystyle\int\limits_{\mathbb{Q}_{p}^{4}}}
\left\vert f^{\mathbf{\circ}}\left(  x\right)  \right\vert _{p}^{-2}%
\varphi\left(  x\right)  d^{4}x.
\]

\end{proof}

\begin{proposition}
\label{fourietransformtheorem} Considering $K_{\alpha}\in\mathbf{\Phi}%
^{\prime}(\mathbb{Q}_{p}^{4})$ and $K_{-\alpha}^{\mathbf{\circ}}%
\in\mathbf{\Psi}^{\prime}(\mathbb{Q}_{p}^{4})$,%
\[
\mathcal{F}\left[  K_{\alpha}\right]  =K_{-\alpha}^{\mathbf{\circ}}\text{ for
}\alpha\in\mathbb{C}\text{.}%
\]

\end{proposition}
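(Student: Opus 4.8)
The plan is to extend the identity $\mathcal{F}[K_\alpha]=K_{-\alpha}^{\circ}$ from the range where it is already known, namely Proposition \ref{fourietran} covering $\alpha\neq 2+\alpha_k$ and $\alpha\neq\alpha_k$, to the full complex plane by handling the two families of exceptional points separately and invoking a continuity/analyticity argument in between. First I would observe that for a fixed $\varphi\in\mathbf{\Psi}(\mathbb{Q}_p^4)$, both sides of $\left\langle \mathcal{F}[K_\alpha],\varphi\right\rangle$ and $\left\langle K_{-\alpha}^{\circ},\varphi\right\rangle$ are meromorphic functions of $\alpha$ (the left-hand side via the meromorphic continuation of $K_\alpha$ from Lemma \ref{lemma3}, the right-hand side because $\left\langle K_{-\alpha}^{\circ},\varphi\right\rangle=\int_{\|x\|_p>p^m}|f^{\circ}(x)|_p^{-\alpha}\varphi(x)\,d^4x$ is entire in $\alpha$ thanks to $\varphi(0)=0$ and the ellipticity bound (\ref{Zuniga_ineq2})). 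Since they agree on the set $\mathbb{C}\setminus\{2+\alpha_k,\ \alpha_k\}$, which has accumulation points everywhere, the identity automatically propagates to all of $\mathbb{C}$ provided both sides are actually holomorphic (not merely meromorphic) at the exceptional points.

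Thus the real content is to check that $\left\langle \mathcal{F}[K_\alpha],\varphi\right\rangle$ has no pole at $\alpha=\alpha_k$ and at $\alpha=2+\alpha_k$ when tested against $\varphi\in\mathbf{\Psi}$, and to identify the value. For the family $\alpha=2+\alpha_k$, this is exactly Lemma \ref{Lemma_Fourier_K2}: it shows $\left\langle \mathcal{F}[K_{2+\alpha_k}],\varphi\right\rangle=\left\langle K_{-2}^{\circ},\varphi\right\rangle$, which is the desired formula at those points. For the family $\alpha=\alpha_k$, I would use (\ref{deltacero}), which says $K_{\alpha_k}=\delta$ as a distribution on $\mathbf{\Phi}$; hence $\mathcal{F}[K_{\alpha_k}]=\mathcal{F}[\delta]=1$ on $\mathbf{\Psi}$, and one must check this equals $\left\langle K_{0}^{\circ},\varphi\right\rangle=\int_{\mathbb{Q}_p^4}|f^{\circ}(x)|_p^{0}\varphi(x)\,d^4x=\int\varphi(x)\,d^4x=\left\langle 1,\varphi\right\rangle$. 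This is consistent, so the identity holds at $\alpha=\alpha_k$ as well. Concretely, I would argue: $\left\langle \mathcal{F}[K_\alpha],\varphi\right\rangle=\left\langle K_\alpha,\mathcal{F}[\varphi]\right\rangle$, pass to the limit $\alpha\to\alpha_k$ using Lemma \ref{lemma3} (the prefactor $\frac{1-p^{-\alpha}}{1-p^{\alpha-2}}$ stays bounded and the bracketed integral is continuous in $\alpha$ near $\alpha_k$), obtaining $\mathcal{F}[\varphi](0)=0$ plus a convergent term; on the other side the limit of $\left\langle K_{-\alpha}^{\circ},\varphi\right\rangle$ as $\alpha\to 0$ is $\int\varphi\,d^4x$, and these match because $\mathcal{F}[\varphi](0)=\int\varphi\,d^4x$ for $\varphi\in\mathbf{\Psi}$... wait, more carefully one uses $\widehat{\varphi}(0)=\int\varphi$, but $\varphi\in\mathbf{\Psi}$ means $\varphi(0)=0$, not $\widehat\varphi(0)=0$; so the delta term $\widehat\varphi(0)\frac{1-p^{-2}}{1-p^{\alpha-2}}$ evaluated at $\alpha=0$ gives $\widehat\varphi(0)\frac{1-p^{-2}}{1-p^{-2}}=\widehat\varphi(0)=\int\varphi\,d^4x$, which indeed matches $\left\langle K_0^{\circ},\varphi\right\rangle$.

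The proof I would write therefore has three short movements: (1) invoke Proposition \ref{fourietran} for generic $\alpha$; (2) invoke Lemma \ref{Lemma_Fourier_K2} to cover $\alpha=2+\alpha_k$; (3) handle $\alpha=\alpha_k$ by the computation sketched above, using (\ref{deltacero}) together with Lemma \ref{lemma3} to take the limit through the test pairing, and checking both sides limit to $\int_{\mathbb{Q}_p^4}\varphi\,d^4x$. Then conclude that $\mathcal{F}[K_\alpha]$ and $K_{-\alpha}^{\circ}$ agree as distributions on $\mathbf{\Psi}(\mathbb{Q}_p^4)$ for every $\alpha\in\mathbb{C}$. The main obstacle is not any single hard estimate but rather the bookkeeping at the exceptional points: one must be careful that the potential poles of $K_\alpha$ at $\alpha=\alpha_k$ really are killed by the constraint $\varphi(0)=0$ (for $\alpha=\alpha_k$ one is testing $K_\alpha$ against $\widehat\varphi$, whose value at $0$ is $\int\varphi\neq 0$ in general, so the ``pole'' of $K_\alpha$ at $\alpha=\alpha_k$ is the genuine $\delta$, and the matching with $K_0^{\circ}=1$ must be verified by hand rather than waved away), and that the right-hand side $K_{-\alpha}^{\circ}$ is genuinely entire so that no spurious singularity appears on that side. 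Once those two points are nailed down, the analytic-continuation principle does the rest for free.
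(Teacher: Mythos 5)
Your proposal is correct and takes essentially the same route as the paper, whose proof simply combines Proposition \ref{fourietran} (generic $\alpha$) with Lemma \ref{Lemma_Fourier_K2} (the points $\alpha=2+\alpha_{k}$). The only addition is that you explicitly verify the remaining exceptional family $\alpha=\alpha_{k}$, using (\ref{deltacero}) together with $K_{-\alpha_{k}}^{\mathbf{\circ}}=1$ and the constraint $\varphi(0)=0$; this case is formally excluded by the two results the paper cites, and your check correctly fills in that (easy) detail which the paper leaves implicit.
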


\begin{proof}
The result follows from Proposition \ref{fourietran} and Lemma
\ref{Lemma_Fourier_K2}.
\end{proof}

\begin{lemma}
\label{lemma6} For any $\alpha$, $\beta\in\mathbb{C}$, $K_{-\alpha
}^{\mathbf{\circ}}\cdot K_{-\beta}^{\mathbf{\circ}}=K_{-(\alpha+\beta
)}^{\mathbf{\circ}}$ in $\in\mathbf{\Psi}^{\prime}(\mathbb{Q}_{p}^{4})$.
\end{lemma}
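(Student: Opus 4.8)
The plan is to evaluate the product straight from its definition, exploiting the fact that every test function of the second kind vanishes in a neighbourhood of the origin, which is the only point where $f^{\mathbf{\circ}}$ vanishes. This lets one avoid analytic continuation in $(\alpha ,\beta )$ altogether and handle all complex $\alpha ,\beta$ uniformly.

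\emph{Step 1 (the factors as Lizorkin distributions).} First I would record that for $\varphi \in \mathbf{\Psi}(\mathbb{Q}_{p}^{4})$ with exponent of local constancy $m$, the condition $\varphi (0)=0$ forces $\varphi \equiv 0$ on $B_{m}^{4}$, so that $\mathrm{supp}\,\varphi$ is a compact subset $\mathcal{K}$ of $\mathbb{Q}_{p}^{4}\setminus \{0\}$. Since $f^{\mathbf{\circ}}$ is a polynomial that does not vanish off the origin and $|\cdot |_{p}$ is locally constant on $\mathbb{Q}_{p}^{\times}$, the function $x\mapsto |f^{\mathbf{\circ}}(x)|_{p}^{-\gamma}$ is locally constant on $\mathcal{K}$, hence takes finitely many values there; consequently $\langle K_{-\gamma}^{\mathbf{\circ}},\varphi \rangle =\int _{\mathcal{K}}|f^{\mathbf{\circ}}(x)|_{p}^{-\gamma}\varphi (x)\,d^{4}x$ is a finite integral, well defined (and in fact entire) in $\gamma \in \mathbb{C}$. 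In particular $K_{-\alpha}^{\mathbf{\circ}},K_{-\beta}^{\mathbf{\circ}},K_{-(\alpha +\beta )}^{\mathbf{\circ}}\in \mathbf{\Psi}^{\prime}(\mathbb{Q}_{p}^{4})$ for all $\alpha ,\beta$; the ellipticity bound (\ref{Zuniga_ineq2}) bounds $|f^{\mathbf{\circ}}|_{p}$ above and below on $\mathcal{K}$, which is convenient though not strictly needed.

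\emph{Step 2 (computing the product).} By the definition of the product of distributions from Section \ref{Section1}, $\langle K_{-\alpha}^{\mathbf{\circ}}\cdot K_{-\beta}^{\mathbf{\circ}},\varphi \rangle =\lim _{k\rightarrow +\infty}\langle K_{-\beta}^{\mathbf{\circ}},(K_{-\alpha}^{\mathbf{\circ}}\ast \delta _{k})\varphi \rangle$. For $x$ with $\|x\|_{p}>p^{-k}$ (so that $\delta _{k}(x-\cdot )\in \mathbf{\Psi}$) one has
\[
(K_{-\alpha}^{\mathbf{\circ}}\ast \delta _{k})(x)=\langle K_{-\alpha}^{\mathbf{\circ}}(y),\delta _{k}(x-y)\rangle =p^{4k}\int _{B_{-k}^{4}(x)}|f^{\mathbf{\circ}}(y)|_{p}^{-\alpha}\,d^{4}y .
\]
By compactness of $\mathrm{supp}\,\varphi$ together with the local constancy of $|f^{\mathbf{\circ}}|_{p}^{-\alpha}$ off the origin there is $k_{0}$ such that for all $k\geq k_{0}$ and all $x\in \mathrm{supp}\,\varphi$ the ball $B_{-k}^{4}(x)$ lies in $\mathbb{Q}_{p}^{4}\setminus \{0\}$ and $|f^{\mathbf{\circ}}|_{p}^{-\alpha}$ is constant on it; since $p^{4k}\,\mathrm{vol}(B_{-k}^{4})=1$ this gives $(K_{-\alpha}^{\mathbf{\circ}}\ast \delta _{k})(x)=|f^{\mathbf{\circ}}(x)|_{p}^{-\alpha}$ there. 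Hence for $k\geq k_{0}$ we have $(K_{-\alpha}^{\mathbf{\circ}}\ast \delta _{k})\varphi =|f^{\mathbf{\circ}}|_{p}^{-\alpha}\varphi \in \mathbf{\Psi}(\mathbb{Q}_{p}^{4})$, so that
\[
\langle K_{-\beta}^{\mathbf{\circ}},(K_{-\alpha}^{\mathbf{\circ}}\ast \delta _{k})\varphi \rangle =\int _{\mathbb{Q}_{p}^{4}}|f^{\mathbf{\circ}}(x)|_{p}^{-\beta}|f^{\mathbf{\circ}}(x)|_{p}^{-\alpha}\varphi (x)\,d^{4}x=\langle K_{-(\alpha +\beta )}^{\mathbf{\circ}},\varphi \rangle ,
\]
independently of $k\geq k_{0}$. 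Letting $k\rightarrow +\infty$ yields $\langle K_{-\alpha}^{\mathbf{\circ}}\cdot K_{-\beta}^{\mathbf{\circ}},\varphi \rangle =\langle K_{-(\alpha +\beta )}^{\mathbf{\circ}},\varphi \rangle$ for every $\varphi \in \mathbf{\Psi}(\mathbb{Q}_{p}^{4})$, which is the claim.

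\emph{Main obstacle.} The only delicate point is bookkeeping: $K_{-\alpha}^{\mathbf{\circ}}$ is \emph{not} a distribution on $\mathbf{S}(\mathbb{Q}_{p}^{4})$ once $\mathrm{Re}\,\alpha \geq 2$, so $K_{-\alpha}^{\mathbf{\circ}}\ast \delta _{k}$ is only meaningful on $\{\|x\|_{p}>p^{-k}\}$. The argument works because this region already contains $\mathrm{supp}\,\varphi$ and, for $k$ large, $(K_{-\alpha}^{\mathbf{\circ}}\ast \delta _{k})\varphi$ stabilizes exactly to $|f^{\mathbf{\circ}}|_{p}^{-\alpha}\varphi$, so no limiting or uniform-integrability estimate is required — the Lizorkin framework has removed the singularity at $0$. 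An alternative route would be to prove the identity first in $\mathbf{S}^{\prime}(\mathbb{Q}_{p}^{4})$ for $\mathrm{Re}\,\alpha ,\mathrm{Re}\,\beta <1$ via Lemma \ref{producdistributions}, where the genuine work is checking continuity at $\xi =0$ of $\xi \mapsto \int |f^{\mathbf{\circ}}(x)|_{p}^{-\beta}\varphi (x)|f^{\mathbf{\circ}}(x-\xi )|_{p}^{-\alpha}\,d^{4}x$ using (\ref{Zuniga_ineq2}), and then to continue analytically in $(\alpha ,\beta )$; that route is correct but more laborious than the direct one above.
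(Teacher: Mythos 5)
Your argument is correct, and it takes a somewhat different route from the paper's. The paper proves the lemma by invoking the multiplication criterion of Lemma \ref{producdistributions}: it checks that $\xi\mapsto\int K_{-\alpha}^{\mathbf{\circ}}(x)\varphi(x)K_{-\beta}^{\mathbf{\circ}}(x-\xi)\,d^{4}x$ is continuous at $\xi=0$, using the dominated convergence theorem together with the ellipticity bound (\ref{Zuniga_ineq2}) and the fact that a test function $\varphi\in\mathbf{\Psi}$ vanishes on a ball $B_{m(\varphi)}(0)$, so that the integrand is dominated by $C\,\Vert x\Vert_{p}^{-2\operatorname{Re}(\alpha)-2\operatorname{Re}(\beta)}$ on the compact set $\operatorname{supp}\varphi$. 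You instead unwind the definition $\langle f\cdot g,\varphi\rangle=\lim_{k}\langle g,(f\ast\delta_{k})\varphi\rangle$ and observe that, because $|f^{\mathbf{\circ}}|_{p}$ is locally constant off the origin and $\operatorname{supp}\varphi$ is a compact subset of $\mathbb{Q}_{p}^{4}\setminus\{0\}$, the regularization $(K_{-\alpha}^{\mathbf{\circ}}\ast\delta_{k})\varphi$ stabilizes exactly to $|f^{\mathbf{\circ}}|_{p}^{-\alpha}\varphi\in\mathbf{\Psi}$ for $k$ large, so the limit is computed with no estimate or passage-to-the-limit theorem at all. Both proofs rest on the same structural facts (Lizorkin test functions of second kind avoid the only zero of $f^{\mathbf{\circ}}$, plus ellipticity); yours is more self-contained and exploits the ultrametric stabilization, while the paper's is shorter on the page because it delegates the mechanism to the cited criterion. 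One small remark: the ``alternative route'' you sketch at the end (work in $\mathbf{S}^{\prime}$ for $\operatorname{Re}\alpha,\operatorname{Re}\beta$ small and continue analytically) is not quite what the paper does --- the paper applies the criterion directly in the Lizorkin setting for all $\alpha,\beta$, precisely because the test functions vanish near the origin, so no analytic continuation is needed there either.
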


\begin{proof}
It follows from Lemma \ref{producdistributions}. Indeed, the functions
$K_{-\alpha}^{\mathbf{\circ}}$ and $K_{-\beta}^{\mathbf{\circ}}$ belong to
$L_{loc}^{1}$, and
\[
\lim_{\xi\rightarrow0}%
{\displaystyle\int\limits_{\mathbb{Q}_{p}^{4}}}
K_{-\alpha}^{\mathbf{\circ}}(x)\varphi(x)K_{-\beta}^{\mathbf{\circ}}%
(x-\xi)d^{4}x=%
{\displaystyle\int\limits_{\mathbb{Q}_{p}^{4}}}
K_{-\alpha}^{\mathbf{\circ}}(x)\varphi(x)K_{-\beta}^{\mathbf{\circ}}(x)d^{4}x.
\]
This last statement follows from the Lebesgue Dominated Convergence Theorem
and (\ref{Zuniga_ineq2}) by the inequality
\[
\left\vert K_{-\alpha}^{\mathbf{\circ}}(x)\varphi(x)K_{-\beta}^{\mathbf{\circ
}}(x-\xi)\right\vert \leq C\left(  \varphi,\alpha,\beta\right)  ||x||_{p}%
^{-2\operatorname{Re}\left(  \alpha\right)  -2\operatorname{Re}\left(
\beta\right)  }%
\]
for $x\in$supp $\varphi$ and $||\xi||_{p}\leq p^{m\left(  \varphi\right)  }$,
where $C\left(  \varphi,\alpha,\beta\right)  $ is a positive constant and
$m\left(  \varphi\right)  $ is the largest integer satisfying $\varphi
\mid_{B_{m\left(  \varphi\right)  }(0)}$ $\equiv0$.
\end{proof}

\subsubsection{Proof of Theorem \ref{mainA}.}

By Lemma \ref{lemma6}, for any $\alpha$, $\beta\in\mathbb{C}$, we have
\[
\mathcal{F}\left[  K_{-\alpha}^{\mathbf{\circ}}\cdot K_{-\beta}^{\mathbf{\circ
}}\right]  =\mathcal{F}\left[  K_{-\alpha}^{\mathbf{\circ}}\right]
\ast\mathcal{F}\left[  K_{-\beta}^{\mathbf{\circ}}\right]  =\mathcal{F}\left[
K_{-\left(  \alpha+\beta\right)  }^{\mathbf{\circ}}\right]  \text{ in
}\mathbf{\Phi}^{\prime}(\mathbb{Q}_{p}^{4}).
\]
By Proposition \ref{fourietransformtheorem}, $K_{\alpha}=\mathcal{F}\left[
K_{-\alpha}^{\mathbf{\circ}}\right]  $ for $\alpha\in\mathbb{C}$ since
$\mathcal{F}\left[  \mathcal{F}\left[  K_{\alpha}\right]  \right]  =K_{\alpha
}$, therefore for any $\alpha$, $\beta\in\mathbb{C}$, $K_{\alpha}\ast
K_{\beta}=K_{\alpha+\beta}$.

\begin{remark}
\label{nota_dim_2}The proof given Theorem \ref{mainA} can be extended to cover
the elliptic quadratic forms of dimension $2$, see Proposition \ref{eqfun2}.
\end{remark}

\section{\label{Section3}Pseudodifferential Operators and Fundamental
Solutions}

We take $f(\xi)=\xi_{1}^{2}-a\xi_{2}^{2}-p\xi_{3}^{2}+ap\xi_{4}^{2}$,
$f^{\ast}(\xi)=\frac{ap\xi_{1}^{2}-p\xi_{2}^{2}-a\xi_{3}^{2}+\xi_{4}^{2}}{ap}%
$, with $a\in\mathbb{Z}$ a quadratic non-residue module $p$, as in Section
\ref{Section2}. Given $\alpha>0$, we define the pseudodifferential operator
with symbol $\left\vert f^{\circ}\left(  \xi\right)  \right\vert _{p}^{\alpha
}$ by%
\[%
\begin{array}
[c]{lll}%
\mathbf{S}\left(  \mathbb{Q}_{p}^{4}\right)  & \rightarrow & C\left(
\mathbb{Q}_{p}^{4}\right)  \cap L^{2}\left(  \mathbb{Q}_{p}^{4}\right) \\
&  & \\
\varphi & \rightarrow & \left(  \boldsymbol{f}\left(  \partial,\alpha\right)
\varphi\right)  \left(  x\right)  :=\mathcal{F}_{\xi\rightarrow x}^{-1}\left(
\left\vert f^{\circ}\left(  \xi\right)  \right\vert _{p}^{\alpha}%
\mathcal{F}_{x\rightarrow\xi}\varphi\right)  .
\end{array}
\]
This operator is well-defined since $\left\vert f^{\circ}\left(  \xi\right)
\right\vert _{p}^{\alpha}\mathcal{F}_{x\rightarrow\xi}\varphi\in L^{1}\left(
\mathbb{Q}_{p}^{4}\right)  \cap L^{2}\left(  \mathbb{Q}_{p}^{4}\right)  $.
Note that $\left\vert pf^{\ast}\left(  \xi\right)  \right\vert _{p}^{\alpha
}=\left\vert f^{\circ}\left(  \xi\right)  \right\vert _{p}^{\alpha}$. Since
$\mathcal{F}^{-1}\left(  \left\vert f^{\circ}\right\vert _{p}^{\alpha}\right)
=K_{\alpha}$ (cf. Proposition \ref{fourietransformtheorem} ), by applying
Proposition \ref{Prop1} (iv), we get
\begin{equation}
\boldsymbol{f}\left(  \partial,\alpha\right)  \varphi=K_{-\alpha}\ast
\varphi=\dfrac{1-p^{\alpha}}{1-p^{-\alpha-2}}%
{\displaystyle\int\nolimits_{\mathbb{Q}_{p}^{4}}}
\frac{\varphi(x-y)-\varphi(x)}{|f(y)|_{p}^{\alpha+2}}d^{4}y, \label{Ext_oper}%
\end{equation}
for $\varphi\in\mathbf{S}\left(  \mathbb{Q}_{p}^{4}\right)  $.

Set $\mathcal{E}_{f,\alpha}\left(  \mathbb{Q}_{p}^{4}\right)  $ to be the
class consisting of locally constant functions $\varphi\left(  x\right)  $
satisfying%
\[%
{\displaystyle\int\limits_{\left\Vert x\right\Vert _{p}\geq p^{m}}}
\frac{\left\vert \varphi(x)\right\vert }{|f(x)|_{p}^{\alpha+2}}d^{4}%
x<\infty\text{ for some }m\in\mathbb{Z}\text{.}%
\]

The operator $\boldsymbol{f}\left(  \partial,\alpha\right)  $\ can be extended
to $\mathcal{E}_{f,\alpha}\left(  \mathbb{Q}_{p}^{4}\right)  $.

\begin{lemma}
\label{lemma6A}If $\varphi\in\mathcal{E}_{f,\alpha}\left(  \mathbb{Q}_{p}%
^{4}\right)  $, then the integral on the right- hand side of (\ref{Ext_oper}) converges.
\end{lemma}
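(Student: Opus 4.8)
The plan is to fix $x\in\mathbb{Q}_{p}^{4}$ and split the $y$-integral in (\ref{Ext_oper}) according to the size of $\left\Vert y\right\Vert _{p}$. Let $l=l(x)$ be the exponent of local constancy of $\varphi$ at $x$, so that $\varphi(x-y)=\varphi(x)$ whenever $\left\Vert y\right\Vert _{p}\leq p^{l}$. Then the integrand of (\ref{Ext_oper}) vanishes on the ball $\left\Vert y\right\Vert _{p}\leq p^{l}$, and the integral equals $I_{1}-\varphi(x)I_{2}$, where
\[
I_{1}:=\int_{\left\Vert y\right\Vert _{p}>p^{l}}\frac{\varphi(x-y)}{|f(y)|_{p}^{\alpha+2}}\,d^{4}y,\qquad I_{2}:=\int_{\left\Vert y\right\Vert _{p}>p^{l}}\frac{d^{4}y}{|f(y)|_{p}^{\alpha+2}}.
\]
It suffices to prove that $I_{1}$ and $I_{2}$ converge absolutely. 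For $I_{2}$, the substitution $y=p^{l}w$ gives $I_{2}=p^{2l\alpha}\int_{\left\Vert w\right\Vert _{p}>1}|f(w)|_{p}^{-\alpha-2}\,d^{4}w$, which is finite by Lemma \ref{lemma4} (one may also bound $|f(y)|_{p}\geq B\left\Vert y\right\Vert _{p}^{2}$ using (\ref{Zuniga_ineq}) and sum a geometric series, using $\operatorname{Re}(\alpha)>0$).

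For $I_{1}$, substitute $z=x-y$ to obtain $I_{1}=\int_{\left\Vert x-z\right\Vert _{p}>p^{l}}|f(x-z)|_{p}^{-\alpha-2}\varphi(z)\,d^{4}z$, and pick an integer $M$ with $p^{M}\geq\max(\left\Vert x\right\Vert _{p},p^{m})$, where $m$ is an exponent for which $\int_{\left\Vert z\right\Vert _{p}\geq p^{m}}|\varphi(z)|\,|f(z)|_{p}^{-\alpha-2}\,d^{4}z<\infty$; such an $m$ exists because $\varphi\in\mathcal{E}_{f,\alpha}(\mathbb{Q}_{p}^{4})$. Split the region of integration of $I_{1}$ into $\{z:p^{l}<\left\Vert x-z\right\Vert _{p},\ \left\Vert z\right\Vert _{p}\leq p^{M}\}$ and $\{z:\left\Vert z\right\Vert _{p}>p^{M}\}$. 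On the first region $\varphi$ is bounded, since $\varphi$, being locally constant, takes only finitely many values on the compact ball $\left\Vert z\right\Vert _{p}\leq p^{M}$; moreover (\ref{Zuniga_ineq}) gives $|f(x-z)|_{p}\geq B\left\Vert x-z\right\Vert _{p}^{2}\geq Bp^{2(l+1)}>0$, and the region has finite Haar measure, so the integral over it is finite. On the second region $\left\Vert z\right\Vert _{p}>\left\Vert x\right\Vert _{p}$, hence $\left\Vert x-z\right\Vert _{p}=\left\Vert z\right\Vert _{p}$, and (\ref{Zuniga_ineq}) yields $|f(x-z)|_{p}\geq B\left\Vert z\right\Vert _{p}^{2}\geq (B/A)|f(z)|_{p}$; therefore
\[
\int_{\left\Vert z\right\Vert _{p}>p^{M}}\frac{|\varphi(z)|}{|f(x-z)|_{p}^{\alpha+2}}\,d^{4}z\leq\left(\frac{A}{B}\right)^{\alpha+2}\int_{\left\Vert z\right\Vert _{p}\geq p^{m}}\frac{|\varphi(z)|}{|f(z)|_{p}^{\alpha+2}}\,d^{4}z<\infty.
\]
Combining the two contributions gives the absolute convergence of $I_{1}$, and the lemma follows.

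I expect the step requiring the most care to be the passage, on the region $\left\Vert z\right\Vert _{p}>\left\Vert x\right\Vert _{p}$, from integrability of $|\varphi|\,|f|_{p}^{-\alpha-2}$ to integrability of $|\varphi(\cdot)|\,|f(x-\cdot)|_{p}^{-\alpha-2}$: this rests on the ultrametric identity $\left\Vert x-z\right\Vert _{p}=\left\Vert z\right\Vert _{p}$ together with the two-sided ellipticity estimate (\ref{Zuniga_ineq}), which together make $|f(x-z)|_{p}$ and $|f(z)|_{p}$ comparable up to the constant $A/B$. Everything else is routine bookkeeping with geometric series and with the finite measure of a ball.
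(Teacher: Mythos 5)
Your argument is correct, and its skeleton coincides with the paper's: use local constancy of $\varphi$ to discard the region $\left\Vert y\right\Vert _{p}\leq p^{l}$, split what remains into $\int |f(y)|_{p}^{-\alpha-2}d^{4}y$ and $\int|\varphi(x-y)|\,|f(y)|_{p}^{-\alpha-2}d^{4}y$, substitute $z=x-y$, and control $|f|_{p}$ through the ellipticity estimate (\ref{Zuniga_ineq}). The difference is in how the far integral is organized: the paper reduces to $\int_{\left\Vert x-z\right\Vert _{p}>p^{l}}|\varphi(z)|\,\left\Vert x-z\right\Vert _{p}^{-2\alpha-4}d^{4}z$ and distinguishes the three cases $\left\Vert x\right\Vert _{p}<\left\Vert z\right\Vert _{p}$, $\left\Vert x\right\Vert _{p}>\left\Vert z\right\Vert _{p}$, $\left\Vert x\right\Vert _{p}=\left\Vert z\right\Vert _{p}$ (the first two left to the reader, the third handled by rescaling to the unit sphere), whereas you split at the single radius $p^{M}\geq\max(\left\Vert x\right\Vert _{p},p^{m})$: on the compact ball, boundedness of the locally constant $\varphi$ together with $\left\Vert x-z\right\Vert _{p}\geq p^{l+1}$ and finiteness of the measure suffice, and on the complement the ultrametric identity $\left\Vert x-z\right\Vert _{p}=\left\Vert z\right\Vert _{p}$ turns the defining integrability condition of $\mathcal{E}_{f,\alpha}$ into exactly the bound needed; this has the merit of making explicit where the hypothesis $\varphi\in\mathcal{E}_{f,\alpha}$ actually enters, which in the paper happens only inside the cases left to the reader. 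One trivial slip: to map $\{\left\Vert y\right\Vert _{p}>p^{l}\}$ onto $\{\left\Vert w\right\Vert _{p}>1\}$ the substitution must be $y=p^{-l}w$, yielding the factor $p^{-2l\alpha}$, not $y=p^{l}w$ with $p^{2l\alpha}$; this is harmless, since your parenthetical alternative via (\ref{Zuniga_ineq}) and a geometric series already gives the convergence of $I_{2}$, as does Lemma \ref{lemma4} once the rescaling is corrected.
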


\begin{proof}
Since $\varphi$ is locally constant there exists $l=l(x)\in\mathbb{Z}$ such
that $\varphi(x-y)-\varphi(x)=0$ for $\left\Vert y\right\Vert _{p}\leq p^{l}$,
thus, it is sufficient to show the convergence of the following integrals:
\[%
{\displaystyle\int\limits_{\left\Vert y\right\Vert _{p}>p^{l}}}
\frac{1}{|f(y)|_{p}^{\alpha+2}}d^{4}y,%
{\displaystyle\int\limits_{\left\Vert y\right\Vert _{p}>p^{l}}}
\frac{\left\vert \varphi(x-y)\right\vert }{|f(y)|_{p}^{\alpha+2}}d^{4}y=%
{\displaystyle\int\limits_{\left\Vert x-z\right\Vert _{p}>p^{l}}}
\frac{\left\vert \varphi(z)\right\vert }{|f(x-z)|_{p}^{\alpha+2}}d^{4}z.
\]
The convergence of the first integral follows from (\ref{Zuniga_ineq}). To
establish the convergence of the second integral, it is sufficient to \ show
the convergence of the integral
\begin{equation}%
{\displaystyle\int\limits_{\left\Vert x-z\right\Vert _{p}>p^{l}}}
\frac{\left\vert \varphi(z)\right\vert }{\left\Vert x-z\right\Vert
_{p}^{2\alpha+4}}d^{4}z, \label{int}%
\end{equation}
cf. (\ref{Zuniga_ineq}). The convergence of this last integral is established
by considering the cases: (i) $\left\Vert x\right\Vert _{p}<\left\Vert
z\right\Vert _{p}$, (ii) $\left\Vert x\right\Vert _{p}>\left\Vert z\right\Vert
_{p}$, (iii) $\left\Vert x\right\Vert _{p}=\left\Vert z\right\Vert _{p}$. The
verification of cases (i)-(ii) is left to the reader. In the case (iii), we
change variables as $x=p^{M}\widetilde{x}$, $z=p^{M}\widetilde{z}$ with
$\left\Vert \widetilde{x}\right\Vert _{p}=\left\Vert \widetilde{z}\right\Vert
_{p}=1$ in (\ref{int}), then%
\begin{align*}%
{\displaystyle\int\limits_{\left\Vert x-z\right\Vert _{p}>p^{l}}}
\frac{\left\vert \varphi(z)\right\vert }{\left\Vert x-z\right\Vert
_{p}^{2\alpha+4}}d^{4}z  &  =p^{2M\alpha}%
{\displaystyle\int\limits_{\substack{\left\Vert \widetilde{x}-\widetilde
{z}\right\Vert _{p}>p^{l+M}\\\left\Vert \widetilde{z}\right\Vert =1}}}
\frac{\left\vert \varphi(p^{M}\widetilde{z})\right\vert }{\left\Vert
\widetilde{x}-\widetilde{z}\right\Vert _{p}^{2\alpha+4}}d^{4}\widetilde{z}\\
&  \leq p^{2M\alpha-\left(  2\alpha+4\right)  \left(  l+M\right)  }%
{\displaystyle\int\limits_{\left\Vert \widetilde{z}\right\Vert =1}}
\left\vert \varphi(p^{M}\widetilde{z})\right\vert d^{4}\widetilde{z}<\infty.
\end{align*}

\end{proof}

The space of test functions $\mathbf{S}\left(  \mathbb{Q}_{p}^{4}\right)  $ is
not invariant under the action of $\boldsymbol{f}\left(  \partial
,\alpha\right)  $. But if we replace $\mathbf{S}\left(  \mathbb{Q}_{p}%
^{4}\right)  $ by $\mathbf{\Phi}\left(  \mathbb{Q}_{p}^{4}\right)  $ then
$\boldsymbol{f}\left(  \partial,\alpha\right)  \mathbf{\Phi}\left(
\mathbb{Q}_{p}^{4}\right)  =\mathbf{\Phi}\left(  \mathbb{Q}_{p}^{4}\right)  $.
The verification of this fact involves the same ideas used in the verification
of the corresponding assertion for the Taibleson operator, see e.g.
\cite[Lemma 9.2.5]{A-K-S}. On the other hand, the mapping%
\[%
\begin{array}
[c]{ccc}%
\mathbf{\Phi}^{\prime}\left(  \mathbb{Q}_{p}^{4}\right)  & \rightarrow &
\mathbf{\Phi^{\prime}}(\mathbb{Q}_{p}^{4})\\
&  & \\
J & \rightarrow & \boldsymbol{f}\left(  \partial,\alpha\right)  J:=\mathcal{F}%
^{-1}\left[  \left\vert f^{\circ}\right\vert _{p}^{\alpha}\mathcal{F}\left[
J\right]  \right]
\end{array}
\]
is a homeomorphism. This is a consequence of the fact that the map%
\[%
\begin{array}
[c]{lll}%
\mathbf{\Psi}\left(  \mathbb{Q}_{p}^{4}\right)  & \rightarrow & \mathbf{\Psi
}\left(  \mathbb{Q}_{p}^{4}\right) \\
&  & \\
\varphi & \rightarrow & \left\vert pf^{\ast}\right\vert _{p}^{\alpha}\varphi
\end{array}
\]
is a homeomorphism.

\begin{lemma}
\label{lemma7}The following formulas hold:

(i) $\left\langle \boldsymbol{f}\left(  \partial,\alpha\right)  J,\varphi
\right\rangle =\left\langle J,\boldsymbol{f}\left(  \partial,\alpha\right)
\varphi\right\rangle $ \ for any $J\in\mathbf{\Phi^{\prime}}(\mathbb{Q}%
_{p}^{4})$ and $\varphi\in\mathbf{\Phi}(\mathbb{Q}_{p}^{4})$\thinspace;

(ii) $\boldsymbol{f}\left(  \partial,\alpha\right)  J=K_{-\alpha}\ast J$ \ for
any $J\in\mathbf{\Phi^{\prime}}(\mathbb{Q}_{p}^{4})$.
\end{lemma}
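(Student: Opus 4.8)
The plan is to deduce both identities from the transposition rules that define the Fourier transform on the Lizorkin spaces $\mathbf{\Phi}(\mathbb{Q}_{p}^{4})$ and $\mathbf{\Psi}(\mathbb{Q}_{p}^{4})$, combined with the fact recorded just before the Lemma that multiplication by $\left\vert f^{\circ}\right\vert _{p}^{\alpha}=\left\vert pf^{\ast}\right\vert _{p}^{\alpha}$ is a homeomorphism of $\mathbf{\Psi}(\mathbb{Q}_{p}^{4})$; for (ii) we add Proposition \ref{fourietransformtheorem} together with the convolution theorem for the $p$-adic Lizorkin spaces already used in the proof of Theorem \ref{mainA}.

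For (i), fix $J\in\mathbf{\Phi}^{\prime}(\mathbb{Q}_{p}^{4})$ and $\varphi\in\mathbf{\Phi}(\mathbb{Q}_{p}^{4})$, so that $\mathcal{F}^{-1}[\varphi]\in\mathbf{\Psi}(\mathbb{Q}_{p}^{4})$ and hence $\left\vert f^{\circ}\right\vert _{p}^{\alpha}\mathcal{F}^{-1}[\varphi]\in\mathbf{\Psi}(\mathbb{Q}_{p}^{4})$, while multiplication by $\left\vert f^{\circ}\right\vert _{p}^{\alpha}$ transposes to a continuous map $\mathbf{\Psi}^{\prime}(\mathbb{Q}_{p}^{4})\to\mathbf{\Psi}^{\prime}(\mathbb{Q}_{p}^{4})$. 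Starting from $\boldsymbol{f}\left(\partial,\alpha\right)J=\mathcal{F}^{-1}\left[\left\vert f^{\circ}\right\vert _{p}^{\alpha}\mathcal{F}\left[J\right]\right]$ and transposing each factor onto $\varphi$ by means of the relations $\langle\mathcal{F}[G],\phi\rangle=\langle G,\mathcal{F}[\phi]\rangle$ and $\langle\mathcal{F}[J],\psi\rangle=\langle J,\mathcal{F}[\psi]\rangle$ (valid for $G\in\mathbf{\Psi}^{\prime}$, $\phi\in\mathbf{\Phi}$, $\psi\in\mathbf{\Psi}$, and in their $\mathcal{F}^{-1}$-versions), one gets
\begin{align*}
\left\langle \boldsymbol{f}\left(\partial,\alpha\right)J,\varphi\right\rangle
&=\left\langle \left\vert f^{\circ}\right\vert _{p}^{\alpha}\mathcal{F}\left[J\right],\mathcal{F}^{-1}\left[\varphi\right]\right\rangle\\
&=\left\langle \mathcal{F}\left[J\right],\left\vert f^{\circ}\right\vert _{p}^{\alpha}\mathcal{F}^{-1}\left[\varphi\right]\right\rangle\\
&=\left\langle J,\mathcal{F}\left[\left\vert f^{\circ}\right\vert _{p}^{\alpha}\mathcal{F}^{-1}\left[\varphi\right]\right]\right\rangle .
\end{align*}
It then remains to identify the last inner argument with $\boldsymbol{f}\left(\partial,\alpha\right)\varphi$. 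Since $f^{\circ}$ is homogeneous of degree $2$, the function $\left\vert f^{\circ}\right\vert _{p}^{\alpha}$ is invariant under $x\mapsto -x$ and therefore commutes with the reflection operator; as $\mathcal{F}^{-1}$ is $\mathcal{F}$ post-composed with that reflection, this forces $\mathcal{F}\left[\left\vert f^{\circ}\right\vert _{p}^{\alpha}\mathcal{F}^{-1}\left[\varphi\right]\right]=\mathcal{F}^{-1}\left[\left\vert f^{\circ}\right\vert _{p}^{\alpha}\mathcal{F}\left[\varphi\right]\right]=\boldsymbol{f}\left(\partial,\alpha\right)\varphi$, which is (i).

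For (ii), Proposition \ref{fourietransformtheorem}, applied with $-\alpha$ in place of $\alpha$, gives $\mathcal{F}\left[K_{-\alpha}\right]=\left\vert f^{\circ}\right\vert _{p}^{\alpha}$ in $\mathbf{\Psi}^{\prime}(\mathbb{Q}_{p}^{4})$. The product $\left\vert f^{\circ}\right\vert _{p}^{\alpha}\cdot\mathcal{F}\left[J\right]$ is a well-defined element of $\mathbf{\Psi}^{\prime}(\mathbb{Q}_{p}^{4})$ (again because multiplication by $\left\vert f^{\circ}\right\vert _{p}^{\alpha}$ preserves $\mathbf{\Psi}$), so, exactly as in the proof of Theorem \ref{mainA}, the convolution theorem for the $p$-adic Lizorkin spaces yields
\[
\mathcal{F}\left[K_{-\alpha}\ast J\right]=\mathcal{F}\left[K_{-\alpha}\right]\cdot\mathcal{F}\left[J\right]=\left\vert f^{\circ}\right\vert _{p}^{\alpha}\mathcal{F}\left[J\right],
\]
and applying $\mathcal{F}^{-1}$ gives $K_{-\alpha}\ast J=\mathcal{F}^{-1}\left[\left\vert f^{\circ}\right\vert _{p}^{\alpha}\mathcal{F}\left[J\right]\right]=\boldsymbol{f}\left(\partial,\alpha\right)J$. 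One may also derive (ii) from (i): since $\mathbf{\Phi}(\mathbb{Q}_{p}^{4})\subset\mathbf{S}(\mathbb{Q}_{p}^{4})$, formula (\ref{Ext_oper}) gives $K_{-\alpha}\ast\varphi=\boldsymbol{f}\left(\partial,\alpha\right)\varphi$ for $\varphi\in\mathbf{\Phi}(\mathbb{Q}_{p}^{4})$, and $K_{-\alpha}$ is even, whence $\left\langle K_{-\alpha}\ast J,\varphi\right\rangle =\left\langle J,K_{-\alpha}\ast\varphi\right\rangle =\left\langle J,\boldsymbol{f}\left(\partial,\alpha\right)\varphi\right\rangle =\left\langle \boldsymbol{f}\left(\partial,\alpha\right)J,\varphi\right\rangle$ for every $\varphi\in\mathbf{\Phi}(\mathbb{Q}_{p}^{4})$.

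The bookkeeping — continuity of the transposed operators and membership of the auxiliary functions in $\mathbf{\Phi}$ or $\mathbf{\Psi}$ — is immediate from the facts already recorded. The one genuinely delicate point is justifying the convolution $K_{-\alpha}\ast J$ of two Lizorkin distributions and the fact that $\mathcal{F}$ turns it into a product; this was already settled in the proof of Theorem \ref{mainA}, and it is precisely the place where the second-kind spaces are essential: the obstruction present in $\mathbf{S}^{\prime}$, namely that $\left\vert f\right\vert _{p}^{-\alpha-2}$ fails to be locally integrable at the origin, disappears once the test functions are required to vanish at $0$, and the identity $\left\vert f^{\circ}\right\vert _{p}^{\alpha}\mathbf{\Psi}=\mathbf{\Psi}$ makes the product on the Fourier side well-defined.
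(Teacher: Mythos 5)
Your proof is correct and follows essentially the same route as the paper: part (i) by transposing $\mathcal{F}^{-1}$ and multiplication by $\left\vert f^{\circ}\right\vert _{p}^{\alpha}$ onto the test function and then using the evenness of $\left\vert f^{\circ}\right\vert _{p}^{\alpha}$ to identify $\mathcal{F}\left[  \left\vert f^{\circ}\right\vert _{p}^{\alpha}\mathcal{F}^{-1}\left[  \varphi\right]  \right]$ with $\boldsymbol{f}\left(  \partial,\alpha\right)  \varphi$, and part (ii) from $\mathcal{F}\left[  K_{-\alpha}\right]  =\left\vert f^{\circ}\right\vert _{p}^{\alpha}$ (Proposition \ref{fourietransformtheorem}) together with the product--convolution exchange in the Lizorkin setting. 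The extra remarks (the alternative derivation of (ii) from (i) and the discussion of why the second-kind spaces make the product well defined) are consistent with, and only amplify, the paper's terse argument.
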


\begin{proof}
(i) The formula follows from the following calculation:%
\begin{multline*}
\left\langle \boldsymbol{f}\left(  \partial,\alpha\right)  J,\varphi
\right\rangle =\left\langle \mathcal{F}^{-1}\left[  \left\vert f^{\circ
}\right\vert _{p}^{\alpha}\mathcal{F}\left[  J\right]  \right]  ,\varphi
\right\rangle =\left\langle J,\mathcal{F}\left[  \left\vert f^{\circ
}\right\vert _{p}^{\alpha}\mathcal{F}^{-1}\left[  \varphi\right]  \right]
\right\rangle \\
=\left\langle J,\mathcal{F}\left[  \left\vert f^{\circ}\left(  -\xi\right)
\right\vert _{p}^{\alpha}\mathcal{F}\left[  \varphi\right]  \left(
-\xi\right)  \right]  \right\rangle =\left\langle J,\mathcal{F}^{-1}\left[
\left\vert f^{\circ}\left(  \xi\right)  \right\vert _{p}^{\alpha}%
\mathcal{F}\left[  \varphi\right]  \left(  \xi\right)  \right]  \right\rangle
\\
=\left\langle J,\boldsymbol{f}\left(  \partial,\alpha\right)  \varphi
\right\rangle .
\end{multline*}

(ii) The formula follows from the fact that $\left\vert f^{\circ}\right\vert
_{p}^{\alpha}\mathcal{F}\left[  J\right]  \in\mathbf{\Psi^{\prime}}%
(\mathbb{Q}_{p}^{4})$ by using Proposition \ref{fourietransformtheorem}.
\end{proof}

\begin{definition}
\label{Classical_sol}Consider $\boldsymbol{f}\left(  \partial,\alpha\right)
:$ $\mathbf{\Phi}(\mathbb{Q}_{p}^{4})\rightarrow\mathbf{\Phi}(\mathbb{Q}%
_{p}^{4})$, and the equation%
\begin{equation}
\boldsymbol{f}\left(  \partial,\alpha\right)  u=\varphi\text{, }\quad
\varphi\in\mathbf{\Phi}(\mathbb{Q}_{p}^{4})\text{.} \label{Equation}%
\end{equation}
A classical \ solution of (\ref{Equation}) is a function $u$ belonging to the
domain of $\boldsymbol{f}\left(  \partial,\alpha\right)  $ which satisfies the
equation. A fundamental solution of (\ref{Equation}) is a distribution
$E_{\alpha}\in\mathbf{\Phi}^{\prime}\left(  \mathbb{Q}_{p}^{4}\right)  $ such
that $u\left(  x\right)  =\left(  E_{\alpha}\ast\varphi\right)  \left(
x\right)  $ is a classical solution of (\ref{Equation}) for any $\varphi
\in\mathbf{\Phi}(\mathbb{Q}_{p}^{4})$.
\end{definition}

\begin{lemma}
\label{lemma8}The following two assertions are equivalent:

(i) $E_{\alpha}\in\mathbf{\Phi}^{\prime}\left(  \mathbb{Q}_{p}^{4}\right)  $
is a fundamental solution of (\ref{Equation});

(ii) $\boldsymbol{f}\left(  \partial,\alpha\right)  E_{\alpha}=\delta$ in
$\mathbf{\Phi}^{\prime}\left(  \mathbb{Q}_{p}^{4}\right)  $.
\end{lemma}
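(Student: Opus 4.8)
The plan is to exploit the convolution formula $\boldsymbol{f}\left(\partial,\alpha\right)J=K_{-\alpha}\ast J$ from Lemma \ref{lemma7}(ii) together with the group law $K_{\alpha}\ast K_{\beta}=K_{\alpha+\beta}$ from Theorem \ref{mainA}, and the fact that $K_{\alpha_{k}}=\delta$ for the distinguished values $\alpha_{k}=\frac{2k\pi\sqrt{-1}}{\ln p}$ (see (\ref{deltacero})), together with the identity-like role of $\delta$ for convolution on $\mathbf{\Phi}^{\prime}(\mathbb{Q}_{p}^{4})$, namely $\delta\ast\varphi=\varphi$ for $\varphi\in\mathbf{\Phi}(\mathbb{Q}_{p}^{4})$.

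For the implication (i) $\Rightarrow$ (ii): suppose $E_{\alpha}$ is a fundamental solution, so that $u=E_{\alpha}\ast\varphi$ is a classical solution of (\ref{Equation}) for every $\varphi\in\mathbf{\Phi}(\mathbb{Q}_{p}^{4})$. Applying $\boldsymbol{f}\left(\partial,\alpha\right)$ and using Lemma \ref{lemma7}(ii) gives $K_{-\alpha}\ast(E_{\alpha}\ast\varphi)=\varphi$ in $\mathbf{\Phi}(\mathbb{Q}_{p}^{4})$, and since convolution is associative on the relevant spaces, $\left(\boldsymbol{f}\left(\partial,\alpha\right)E_{\alpha}\right)\ast\varphi=\varphi$ for all $\varphi\in\mathbf{\Phi}(\mathbb{Q}_{p}^{4})$. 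Then I would test against an arbitrary $\psi\in\mathbf{\Phi}(\mathbb{Q}_{p}^{4})$: using $\langle G\ast\varphi,\psi\rangle$ evaluated via the definition of convolution and the fact that $\delta\ast\varphi=\varphi$, one deduces $\langle\boldsymbol{f}\left(\partial,\alpha\right)E_{\alpha}-\delta,\varphi\ast\check\psi\rangle=0$; since products/convolutions of $\mathbf{\Phi}$-functions are dense enough (or simply by taking $\psi$ running over an approximate identity within $\mathbf{\Phi}$, e.g.\ the functions $K_{\alpha_{j}}$ with $\alpha_{j}\to\alpha_{k}$, which converge to $\delta$), one concludes $\boldsymbol{f}\left(\partial,\alpha\right)E_{\alpha}=\delta$.

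For the implication (ii) $\Rightarrow$ (i): assume $\boldsymbol{f}\left(\partial,\alpha\right)E_{\alpha}=\delta$. For $\varphi\in\mathbf{\Phi}(\mathbb{Q}_{p}^{4})$ set $u=E_{\alpha}\ast\varphi$; since $E_{\alpha}\in\mathbf{\Phi}^{\prime}(\mathbb{Q}_{p}^{4})$ and $\varphi\in\mathbf{\Phi}(\mathbb{Q}_{p}^{4})$ the convolution $u$ lies in the domain of $\boldsymbol{f}\left(\partial,\alpha\right)$ (here I invoke that $\boldsymbol{f}\left(\partial,\alpha\right)$ maps $\mathbf{\Phi}^{\prime}$ to $\mathbf{\Phi}^{\prime}$ homeomorphically, and that $\mathbf{\Phi}^{\prime}\ast\mathbf{\Phi}\subset\mathbf{\Phi}^{\prime}$). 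Then by Lemma \ref{lemma7}(ii) and associativity of convolution, $\boldsymbol{f}\left(\partial,\alpha\right)u=K_{-\alpha}\ast(E_{\alpha}\ast\varphi)=(K_{-\alpha}\ast E_{\alpha})\ast\varphi=(\boldsymbol{f}\left(\partial,\alpha\right)E_{\alpha})\ast\varphi=\delta\ast\varphi=\varphi$, so $u$ is a classical solution of (\ref{Equation}), hence $E_{\alpha}$ is a fundamental solution.

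The main obstacle is the bookkeeping around associativity of convolution: convolution on $\mathbf{S}^{\prime}(\mathbb{Q}_{p}^{n})$ is not associative in general, so I must check that the particular triple products $K_{-\alpha}\ast E_{\alpha}\ast\varphi$ are well defined and associative in $\mathbf{\Phi}^{\prime}(\mathbb{Q}_{p}^{4})$. This can be handled by passing to the Fourier side, where convolution becomes pointwise multiplication by $|f^{\circ}|_{p}^{\alpha}$ (an honest locally constant function on $\mathbb{Q}_{p}^{4}\setminus\{0\}$, hence a multiplier on $\mathbf{\Psi}^{\prime}$), using Proposition \ref{fourietransformtheorem} and the homeomorphism $\varphi\mapsto|pf^{\ast}|_{p}^{\alpha}\varphi$ of $\mathbf{\Psi}(\mathbb{Q}_{p}^{4})$; multiplication of a distribution in $\mathbf{\Psi}^{\prime}$ by such a fixed function is manifestly associative, and transporting back via $\mathcal{F}$ gives the needed associativity. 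A secondary point requiring care is the precise sense in which $\delta$ acts as the convolution identity on $\mathbf{\Phi}^{\prime}(\mathbb{Q}_{p}^{4})$; this follows because $\mathcal{F}[\delta]=1$ and multiplication by $1$ is the identity on $\mathbf{\Psi}^{\prime}(\mathbb{Q}_{p}^{4})$.
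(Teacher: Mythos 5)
Your convolution-algebra strategy can be made to work, but as written it has two genuine gaps. The more serious one is in (ii) $\Rightarrow$ (i): you only argue that $u=E_{\alpha}\ast\varphi$ lies in $\mathbf{\Phi}^{\prime}\left(\mathbb{Q}_{p}^{4}\right)$ and satisfies the equation in the distributional sense, and then declare $u$ a classical solution. Definition \ref{Classical_sol} requires $u$ to be a \emph{function} in the domain of $\boldsymbol{f}\left(\partial,\alpha\right):\mathbf{\Phi}\left(\mathbb{Q}_{p}^{4}\right)\rightarrow\mathbf{\Phi}\left(\mathbb{Q}_{p}^{4}\right)$, i.e.\ $u\in\mathbf{\Phi}\left(\mathbb{Q}_{p}^{4}\right)$; what you have shown is only that $E_{\alpha}$ is a \emph{weak} fundamental solution, which is exactly the distinction the paper is at pains to draw. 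The missing step is short but essential: (ii) is equivalent to $\left\vert f^{\circ}\right\vert_{p}^{\alpha}\mathcal{F}\left[E_{\alpha}\right]=1$ in $\mathbf{\Psi}^{\prime}\left(\mathbb{Q}_{p}^{4}\right)$, hence $\mathcal{F}\left[E_{\alpha}\ast\varphi\right]=\mathcal{F}\left[E_{\alpha}\right]\cdot\mathcal{F}\left[\varphi\right]=\left\vert f^{\circ}\right\vert_{p}^{-\alpha}\mathcal{F}\left[\varphi\right]$, which lies in $\mathbf{\Psi}\left(\mathbb{Q}_{p}^{4}\right)$ because $\psi\mapsto\left\vert f^{\circ}\right\vert_{p}^{\alpha}\psi$ is a homeomorphism of $\mathbf{\Psi}\left(\mathbb{Q}_{p}^{4}\right)$; therefore $u\in\mathbf{\Phi}\left(\mathbb{Q}_{p}^{4}\right)$ and the equation holds classically.

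The second gap is the concluding step of (i) $\Rightarrow$ (ii). The passage from $\left(\boldsymbol{f}\left(\partial,\alpha\right)E_{\alpha}\right)\ast\varphi=\varphi$ for all $\varphi\in\mathbf{\Phi}$ to $\boldsymbol{f}\left(\partial,\alpha\right)E_{\alpha}=\delta$ is left to a vague density claim, and the approximate identity you propose is not available: the kernels $K_{\alpha_{j}}$ are distributions, not elements of $\mathbf{\Phi}\left(\mathbb{Q}_{p}^{4}\right)$, and the limit in (\ref{deltacero}) takes place in $\mathbf{S}^{\prime}$, so they cannot serve as test functions $\psi$. The correct argument is again on the Fourier side: writing $G=\boldsymbol{f}\left(\partial,\alpha\right)E_{\alpha}$, the hypothesis says $\mathcal{F}\left[G\right]\cdot\psi=\psi$ in $\mathbf{\Psi}^{\prime}$ for every $\psi\in\mathbf{\Psi}$; since any $\eta\in\mathbf{\Psi}$ vanishes on a neighborhood of the origin, $\eta=\eta\cdot\theta$ with $\theta\in\mathbf{\Psi}$ the characteristic function of the support of $\eta$, and testing $\mathcal{F}\left[G\right]\cdot\eta=\eta$ against $\theta$ gives $\left\langle\mathcal{F}\left[G\right],\eta\right\rangle=\left\langle 1,\eta\right\rangle$, i.e.\ $\mathcal{F}\left[G\right]=1$ in $\mathbf{\Psi}^{\prime}$ and $G=\delta$ in $\mathbf{\Phi}^{\prime}$. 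Note that once you carry out these Fourier-side repairs (which your ``obstacle'' paragraph already gestures at for associativity), the convolution bookkeeping becomes superfluous: the paper proves the lemma in one stroke as the chain of equivalences ``$E_{\alpha}$ is a fundamental solution $\Leftrightarrow\boldsymbol{f}\left(\partial,\alpha\right)\left(E_{\alpha}\ast\varphi\right)=\varphi$ for all $\varphi\in\mathbf{\Phi}\Leftrightarrow\left\vert f^{\circ}\right\vert_{p}^{\alpha}\mathcal{F}\left[E_{\alpha}\right]=1$ in $\mathbf{\Psi}^{\prime}\Leftrightarrow\boldsymbol{f}\left(\partial,\alpha\right)E_{\alpha}=\delta$,'' which settles both directions, the associativity issue and the membership $u\in\mathbf{\Phi}$ simultaneously.
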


\begin{proof}
(i) $\Leftrightarrow\boldsymbol{f}\left(  \partial,\alpha\right)  \left(
E_{\alpha}\ast\varphi\right)  =\varphi$ for any $\varphi\in\mathbf{\Phi
}(\mathbb{Q}_{p}^{4})\Leftrightarrow\left\vert f^{\circ}\right\vert
_{p}^{\alpha}\mathcal{F}\left[  E_{\alpha}\right]  =1$ in $\mathbf{\Psi
}^{\prime}\left(  \mathbb{Q}_{p}^{4}\right)  \Leftrightarrow\boldsymbol{f}%
\left(  \partial,\alpha\right)  E_{\alpha}=\delta$ in $\mathbf{\Phi}^{\prime
}\left(  \mathbb{Q}_{p}^{4}\right)  $.
\end{proof}

\begin{theorem}
\label{mainB}(i) The function%
\[
E_{\alpha}\left(  x\right)  =\left\{
\begin{array}
[c]{lll}%
\dfrac{1-p^{-\alpha}}{1-p^{\alpha-2}}|f(x)|_{p}^{\alpha-2} & \text{if} &
\alpha\neq2\\
&  & \\
-\dfrac{1-p^{-2}}{\ln p}\ln|f\left(  x\right)  |_{p} & \text{if} & \alpha=2
\end{array}
\right.
\]
is a fundamental solution of (\ref{Equation}).

(ii) Consider $\left\vert f\right\vert _{p}^{s}\in\mathbf{\Phi}^{\prime
}\left(  \mathbb{Q}_{p}^{4}\right)  $, $s\in\mathbb{C}$. Then$\ $%
\[
\boldsymbol{f}\left(  \partial,1\right)  \left\vert f\right\vert _{p}%
^{s+1}=\frac{\left(  1-p^{s+1}\right)  \left(  1-p^{-s-2}\right)  }{\left(
1-p^{-s-3}\right)  \left(  1-p^{s}\right)  }\left\vert f\right\vert _{p}%
^{s}\text{ in \ }\mathbf{\Phi}^{\prime}\left(  \mathbb{Q}_{p}^{4}\right)  .
\]

Here we are identifying $\left\vert f\right\vert _{p}^{s}$ with its
meromorphic continuation.
\end{theorem}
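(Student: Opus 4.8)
The plan is to prove both parts by reducing everything to the facts already assembled in the excerpt: the identification $\mathcal{F}[K_\alpha]=K^{\circ}_{-\alpha}$ for all $\alpha\in\mathbb{C}$ (Proposition \ref{fourietransformtheorem}), the group law $K_\alpha\ast K_\beta=K_{\alpha+\beta}$ (Theorem \ref{mainA}), the evaluation $K_{\alpha_k}=\delta$ (equation (\ref{deltacero})), and the computation $\boldsymbol{f}(\partial,\alpha)J=K_{-\alpha}\ast J$ for $J\in\mathbf{\Phi}^{\prime}(\mathbb{Q}_p^4)$ (Lemma \ref{lemma7}(ii)). For part (i): observe that the displayed function $E_\alpha$ is, by the Definition preceding Lemma \ref{lemma3} and the Definition preceding Lemma \ref{Lemma_Fourier_K2}, precisely the distribution $K_\alpha$ (the logarithmic case $\alpha=2$ being the limit value (\ref{K_alpha})). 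By Lemma \ref{lemma8}, it suffices to check $\boldsymbol{f}(\partial,\alpha)E_\alpha=\delta$ in $\mathbf{\Phi}^{\prime}(\mathbb{Q}_p^4)$. Using Lemma \ref{lemma7}(ii) this equals $K_{-\alpha}\ast K_\alpha$, which by Theorem \ref{mainA} is $K_0=K_{\alpha_0}=\delta$ by (\ref{deltacero}). Hence $E_\alpha$ is a fundamental solution.

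For part (ii): I would work on the Fourier side. Since $\mathcal{F}[\,|f|_p^{s}\,]$, understood via meromorphic continuation as an element of $\mathbf{\Psi}^{\prime}(\mathbb{Q}_p^4)$, is—up to the gamma factor coming from the functional equation—a constant multiple of $|f^{\circ}|_p^{-s-?}$. Concretely, from Proposition \ref{eqfun4} one reads off that for $Z_\varphi(s)$ attached to $f$ in dimension $4$,
\[
\mathcal{F}\big[\,|f|_p^{s-2}\,\big]=\frac{1-p^{s-2}}{1-p^{-s}}\,|f^{\circ}|_p^{-s}
\]
in $\mathbf{\Psi}^{\prime}(\mathbb{Q}_p^4)$; reindexing $s\mapsto s+2$ gives $\mathcal{F}[\,|f|_p^{s}\,]=\frac{1-p^{s}}{1-p^{-s-2}}\,|f^{\circ}|_p^{-s-2}$. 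Then
\[
\boldsymbol{f}(\partial,1)\,|f|_p^{s+1}=\mathcal{F}^{-1}\!\Big[\,|f^{\circ}|_p\cdot\mathcal{F}[\,|f|_p^{s+1}\,]\,\Big]
=\mathcal{F}^{-1}\!\Big[\,|f^{\circ}|_p\cdot\tfrac{1-p^{s+1}}{1-p^{-s-3}}\,|f^{\circ}|_p^{-s-3}\,\Big]
=\tfrac{1-p^{s+1}}{1-p^{-s-3}}\,\mathcal{F}^{-1}\big[\,|f^{\circ}|_p^{-s-2}\,\big].
\]
Finally applying the inverse of the Fourier identity above (or equivalently Proposition \ref{fourietransformtheorem} together with Proposition \ref{eqfun4} in the reverse direction) to rewrite $\mathcal{F}^{-1}[\,|f^{\circ}|_p^{-s-2}\,]=\frac{1-p^{-s-2}}{1-p^{s}}\,|f|_p^{s}$ yields the stated coefficient $\frac{(1-p^{s+1})(1-p^{-s-2})}{(1-p^{-s-3})(1-p^{s})}$. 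Throughout, the multiplication $|f^{\circ}|_p\cdot|f^{\circ}|_p^{-s-3}=|f^{\circ}|_p^{-s-2}$ in $\mathbf{\Psi}^{\prime}(\mathbb{Q}_p^4)$ is legitimate by Lemma \ref{lemma6} (pointwise multiplication of powers of the elliptic form $f^{\circ}$), since these are $L^1_{\mathrm{loc}}$ functions away from the origin and the product distribution is the pointwise product on $\mathbf{\Psi}$.

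The main obstacle is bookkeeping rather than conceptual: one must be careful about which normalization of the zeta distribution is in play (the factor $|D|_p^{-1/2}$, the shift by $n/2=2$ between $Z_\varphi(s)$ and $|f|_p^{s}$, and the fact that $f^{\ast}$ versus $f^{\circ}=pf^{\ast}$ differ by $|p|_p^{\pm s}$), and to verify that all these identities of meromorphic families, initially valid for $\operatorname{Re}(s)$ large, persist after continuation—which they do because each side is a rational function of $p^{-s}$ with values in $\mathbf{\Phi}^{\prime}$, so equality on a half-plane forces equality everywhere poles are avoided, and the asserted rational coefficient has no spurious singularities on $\mathbf{\Phi}^{\prime}$ precisely because test functions in $\mathbf{\Phi}$ vanish to all orders at $0$ in the Fourier picture. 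I would double-check the constant by testing both sides against a single convenient function in $\mathbf{\Phi}(\mathbb{Q}_p^4)$, e.g. one supported in an annulus away from $0$, using Lemma \ref{lemma2}-type computations.
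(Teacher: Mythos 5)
Your proposal is correct and takes essentially the same route as the paper: part (i) is the paper's proof verbatim (Lemma \ref{lemma8}, Lemma \ref{lemma7}(ii), Theorem \ref{mainA}, and $K_{0}=\delta$ via (\ref{deltacero})), while in part (ii) your Fourier-side symbol computation is the paper's argument --- writing $\left\vert f\right\vert _{p}^{s+1}=\frac{1-p^{s+1}}{1-p^{-s-3}}K_{s+3}$ and using $K_{-1}\ast K_{s+3}=K_{s+2}$ --- merely conjugated by $\mathcal{F}$, since the group law is itself established through the multiplication identity of Lemma \ref{lemma6} and Proposition \ref{fourietransformtheorem}. Both arguments finish with the same analytic-continuation step, so there is no substantive difference.
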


\begin{proof}
(i) By Lemma \ref{lemma8}, we have to show the existence of a distribution
$E_{\alpha}$ in $\mathbf{\Phi}^{\prime}\left(  \mathbb{Q}_{p}^{4}\right)  $
satisfying $\boldsymbol{f}\left(  \partial,\alpha\right)  E_{\alpha}=\delta$,
which is equivalent (by Lemma \ref{lemma7}\ (ii)) to solve $K_{-\alpha}\ast
E_{\alpha}=\delta$. By Theorem \ref{mainA} this equation has unique solution
$E_{\alpha}=K_{\alpha}$. Finally $u=E_{\alpha}\ast\varphi=\mathcal{F}%
^{-1}\left(  \frac{\mathcal{F}\left(  \varphi\right)  }{\left\vert f^{\circ
}\right\vert _{p}^{\alpha}}\right)  \in\mathbf{\Phi}(\mathbb{Q}_{p}^{4})$ for
$\varphi\in\mathbf{\Phi}(\mathbb{Q}_{p}^{4})$.

(ii) Note that
\[
\left\vert f\right\vert _{p}^{s+1}=\frac{1-p^{s+1}}{1-p^{-s-3}}K_{s+3}\text{
\ in \ }\mathbf{\Phi}^{\prime}\left(  \mathbb{Q}_{p}^{4}\right)  \text{ for
}s\notin\left\{  -3+\alpha_{k}\right\}  \cup\left\{  -1+\alpha_{k}\right\}  .
\]
Then by Lemma \ref{lemma7}\ (ii) and Theorem \ref{mainA}%
\begin{multline*}
\boldsymbol{f}\left(  \partial,1\right)  \left\vert f\right\vert _{p}%
^{s+1}=K_{-1}\ast\left\vert f\right\vert _{p}^{s+1}=\left(  \frac{1-p^{s+1}%
}{1-p^{-s-3}}\right)  K_{-1}\ast K_{s+3}=\left(  \frac{1-p^{s+1}}{1-p^{-s-3}%
}\right)  K_{s+2}\\
=\frac{\left(  1-p^{s+1}\right)  \left(  1-p^{-s-2}\right)  }{\left(
1-p^{-s-3}\right)  \left(  1-p^{s}\right)  }\left\vert f\right\vert _{p}%
^{s}\text{ }%
\end{multline*}
in \ $\mathbf{\Phi}^{\prime}\left(  \mathbb{Q}_{p}^{4}\right)  $ for
$s\notin\left\{  -1+\alpha_{k}\right\}  \cup\left\{  -2+\alpha_{k}\right\}
\cup\left\{  -3+\alpha_{k}\right\}  \cup\left\{  \alpha_{k}\right\}  $. The
announced formula follows by analytic continuation, since the distributions
$\boldsymbol{f}\left(  \partial,1\right)  \left\vert f\right\vert _{p}^{s+1}$
and $\frac{\left(  1-p^{s+1}\right)  \left(  1-p^{-s-2}\right)  }{\left(
1-p^{-s-3}\right)  \left(  1-p^{s}\right)  }\left\vert f\right\vert _{p}^{s}$
agree on an open and connected subset of the complex plane.
\end{proof}

\begin{remark}
\label{nota_ultima}Similar results are valid for pseudodifferential operators
attached to elliptic quadratic forms of dimension $2$.
\end{remark}

\end{document}